\documentclass[journal=gmj]{CUP-JNL-DTM}%

\usepackage{float}

\usepackage{graphicx}
\usepackage{multicol,multirow}
\usepackage{amsmath,amssymb,amsfonts}
\usepackage{mathrsfs}
\usepackage{amsthm}
\usepackage{rotating}
\usepackage{appendix}
\usepackage{ifpdf}
\usepackage[T1]{fontenc}
\usepackage{newtxtext}
\usepackage{newtxmath}
\usepackage{textcomp}
\usepackage{xcolor}
\usepackage{lipsum}
\usepackage[colorlinks,allcolors=blue]{hyperref}
\usepackage{ulem}
\usepackage{soul}

\newtheorem{theorem}{Theorem}[section]
\newtheorem{lemma}[theorem]{Lemma}
\theoremstyle{definition}
\newtheorem{remark}[theorem]{Remark}

\numberwithin{equation}{section}

\newtheorem{definition}[theorem]{Definition}
\newtheorem{proposition}[theorem]{Proposition}

\newtheorem{hypothesis}[theorem]{Hypothesis}
\usepackage{verbatim}

\newcommand{\Tr}{{\operatorname{Tr}}}
\newcommand*{\bigchi}{\mbox{\large$\chi$}}
\DeclareMathOperator\erf{erf}

\newcommand{\sech}{\operatorname{sech}}
\newcommand{\R}{{\mathbb R}}

\newcommand{\C}{{\mathbb C}}

\newcommand{\rexp}{\operatorname{e}} 
\DeclareMathOperator{\diag}{diag}
\renewcommand{\Re}{\operatorname{Re}}
\renewcommand{\Im}{\operatorname{Im}}

\newcommand{\bfA}{{\mathbf{A}}}

\newcommand{\bfI}{{\mathbf{I}}}

\newcommand{\bfQ}{{\mathbf{Q}}}
\newcommand{\bfR}{{\mathbf{R}}}

\newcommand{\bfY}{{\mathbf{Y}}}

\newcommand{\bfp}{{\mathbf{p}}}
\newcommand{\bfq}{{\mathbf{q}}}

\newcommand{\bfu}{{\mathbf{u}}}
\newcommand{\bfv}{{\mathbf{v}}}
\newcommand{\bfw}{{\mathbf{w}}}

\newcommand{\bfy}{{\mathbf{y}}}
\newcommand{\bfz}{{\mathbf{z}}}

\newcommand{\cI}{{\mathcal I}}

\newcommand{\cK}{{\mathcal K}}

\jname{}
\articletype{}

\jyear{}
\jvol{}
\jissue{}

\begin{document}

\begin{Frontmatter}

\title[Stability of Ginzburg-Landau pulses]{Stability of Ginzburg-Landau pulses via Fredholm determinants of Birman-Schwinger operators}

\author[1]{Erika Gallo}
\author[2]{John Zweck}
\author[3]{Yuri Latushkin}

\authormark{E. Gallo, J. Zweck, and Y. Latushkin}

\address[1]{\orgdiv{Department of Mathematics, \orgname{The University of Texas at Arlington}, \orgaddress{\city{Arlington},  \state{TX}, \postcode{76019}, \country{USA}}.
\email{erika.gallo@uta.edu}}}

\address[2]{\orgdiv{Department of Mathematics}, \orgname{New York Institute of Technology}, \orgaddress{\city{New York},  \state{NY}, \postcode{10023}, \country{USA}}. \email{jzweck@nyit.edu}}

\address[3]{\orgdiv{Department of Mathematics, \orgname{University of Missouri}, \orgaddress{\city{Columbia},  \state{MO}, \postcode{65211}, \country{USA}}.
\email{latushkiny@missouri.edu}}}

\keywords{complex Ginzburg-Landau equation, Birman-Schwinger operator, Trace class operator, numerical Fredholm determinant, spectral stability}

\keywords[MSC Codes]{\codes[Primary]{35Q56, 65R20, 47B10}; \codes[Secondary]{47G10, 37L15}}

    
\abstract{We introduce a numerical method to determine the stability of stationary pulse solutions of the  complex Ginzburg-Landau equation.  The method involves the computation of the point spectrum of the first-order linear differential operator with matrix-valued coefficients on the real line obtained by linearizing the Ginzburg-Landau equation about a stationary pulse. Applying a general theory of Gesztesy, Latushkin, and Makarov, we show that this point spectrum is given by the set of zeros of a 2-modified Fredholm determinant of a  Hilbert-Schmidt, Birman-Schwinger operator. We establish conditions which guarantee that this operator is trace class.  Applying  results of Bornemann on the numerical computation of Fredholm determinants, we  obtain a bound on the error between the regular Fredholm determinant of the trace class operator and its numerical approximation by a matrix determinant. We verify the new  numerical Fredholm determinant method for computing the point spectrum of a Ginzburg-Landau pulse by exhibiting excellent agreement  with previous methods. This new approach avoids the challenge of solving the numerically stiff system of equations for the matrix-valued Jost solutions, and it opens the way for the spectral analysis of breather solutions of nonlinear wave equations, for which an Evans function does not exist.}


\end{Frontmatter}

\section{Introduction}

The cubic-quintic complex Ginzburg-Landau equation (CGLE)
is a fundamental model for nonlinear waves and coherent structures
that arise in fields such as  nonlinear optics and condensed matter physics~\cite{akhmediev2008dissipative,RMP74p99}. 
The CGLE supports a wide variety of  solutions, including 
stationary pulses,  breathers,
fronts, exploding solitons, and chaotic solutions~\cite{akhmediev2008dissipative}. 
An important engineering application of the CGLE is
that it provides a qualitative model
for the generation of femtosecond
pulses in fiber lasers~\cite{SIREV48p629}.
In particular, the equation includes large dissipative terms that model the linear filtering  and nonlinear saturable gain and loss that is  a feature of these systems. 

Although  the stability of pulse solutions of the CGLE has been 
theoretically studied~\cite{PhysicaD116p95,PhysicaD124p58,PRA86p033616} 
in the special case that the dissipative terms are small,
the introduction of large dissipative terms gives rise to new classes of solutions. 
While analytical solutions have been found when special relations
hold between the coefficients~\cite{PLA372p3124,JOSAB13p1439,PRE55p4783}, these solutions are unstable in the anomalous dispersion regime~\cite{JOSAB31p2914}.
Moreover, in the case of large dissipative effects 
there are no general results on the stability of numerically determined
pulse solutions that arise in the modeling of fiber lasers.

Many of the theoretical results concerning the stability of  solutions of nonlinear wave equations are based on an analysis of the Evans 
function~\cite{PhysicaD116p95,Indiana53p1095,Kap,JOSAB15p2757,PhysicaD124p58,PTRSLA340p47}. 
Considerable effort has also gone into the development of 
computational  Evans function methods~\cite{PRSL2001p257,PhysicaD172p190,SINUM53p2329,PhysicaD67p45}.
The major challenge with these methods is the need to compute bases for the space of  Jost solutions. The Jost functions are solutions of a 
system of variable-coefficient linear differential equations
with a prescribed set of asymptotic decay rates at spatial infinity. 
Because these decay rates can cover a wide range, the system is 
numerically stiff, which poses significant challenges for numerically 
computing the growth  of Jost solutions from  negative (or positive) 
spatial infinity towards the center of the stationary pulse. Furthermore, the stiffness of the system increases markedly for high-dimensional systems. 
Breather solutions of the CGLE, which are of particular interest in the modeling of modern short-pulse fiber lasers~\cite{shinglot2023essential,shinglot2022continuous,shinglot2024floquet,TURITSYN2012}, are 
pulses that evolve periodically in the temporal variable. By employing
a Fourier series basis in the periodic temporal variable, the linearization of the CGLE about a breather gives rise to an infinite dimensional system. In their paper on the infinite dimensional Evans function for reaction-diffusion systems, Latushkin and Pogan~\cite{latushkin2015infinite}
showed that the analytical Evans function is not even guaranteed to exist 
except when the nonlinearity in the system is small. A practical way
to overcome this shortcoming of the computational Evans function is to 
simply compute the spectrum of a matrix discretization of the linearized
differential operator. For example, matrix discretizations of the monodromy operators
associated with  breather solutions have recently proved to be  effective~\cite{cuevas2017floquet,shinglot2022continuous,shinglot2024floquet}.
However, naive matrix discretizations of linear differential operators
that do not explicitly account for the possible very slow decay of eigenfunctions 
can fail~\cite{barashenkov2000oscillatory,PhysicaD172p190}, especially for eigenvalues that
emerge out of the essential spectrum~\cite{JOSAB15p2757,PhysicaD124p58,shen2016spectra}. 
Indeed the failure of such methods was a primary motivation for the
development of computational Evans function methods. 
Taken together, these observations
highlight the need for an alternative approach to the analysis and numerical computation of the stability of pulse  and breather solutions of nonlinear waves equations.

In 2007, Gesztesy, Latushkin, and Makarov~\cite{EJF} 
established a general theory which showed that, up to  a nonvanishing  factor, the  Evans function, $E=E(\lambda)$, is equal to the 2-modified
Fredholm determinant, $\operatorname{det}_2(\mathcal I+\mathcal K(\lambda))$, of a certain Birman–Schwinger integral operator, 
$\mathcal K=\mathcal K(\lambda)$. Here, $\lambda\in \mathbb C$ denotes the spectral parameter. 
This Birman–Schwinger operator is defined in terms of a matrix-valued,
semi-separable Green’s kernel on the real line. 
In particular, counted with multiplicity, 
the set of zeros of the Evans function coincides with that of the
Fredholm determinant. 
Consequently, the point spectrum of the operator obtained by linearizing the governing nonlinear wave equation about a stationary pulse
is given by the set of zeros of this infinite-dimensional Fredholm determinant. 
Fortuitously, in 2008 Bornemann~\cite{Bornemann} showed that
the Fredholm determinant of a Hilbert-Schmidt operator
on a finite interval can be numerically approximated
by a finite-dimensional matrix determinant constructed
with the aid of a numerical quadrature method. 

The results of Gesztesy, Latushkin, and Makarov~\cite{EJF} and 
Bornemann~\cite{Bornemann} open a window
to an alternate approach for determining the linear stability 
of pulse and breather solutions of nonlinear wave equations.
The purpose of this paper is to investigate the feasibility of this approach in the context of stationary pulse solutions of the CGLE.
Our results build on our recent prior work on a smoothness and decay
criterion on the kernel which guarantees that a Hilbert-Schmidt operator with a 
matrix- or operator-valued kernel on the real line is trace class~\cite{zweck2024regularity}, and on our extension of Bornemann's numerical Fredholm determinant to matrix-valued kernels on the  entire real line~\cite{GZL2025NumericalFredholm}.
Although our results are specific to pulse solutions of the CGLE, we anticipate that the approach can also be applied to breather solutions of the CGLE and to stationary and breather solutions of other nonlinear wave equations.

The results in the paper can be summarized as a follows. 
In section~\ref{sec:Background}, we first recall a  result of 
Zweck, Latushkin, and Gallo~\cite{zweck2024regularity} which states that
a Hilbert-Schmidt operator with a matrix-valued kernel that is  exponentially decaying and Lipschitz-continuous
 is trace class. Then we review results of Bornemann~\cite{Bornemann}, 
and Gallo, Zweck and Latushkin~\cite{GZL2025NumericalFredholm}
on the numerical computation of Fredholm determinants of trace class and Hilbert-Schmidt
operators with matrix-valued kernels. In section~\ref{linearizationsection}, we define the 
second-order, variable-coefficient differential operator, $\mathcal L$, obtained by linearizing the complex Ginzburg-Landau equation about a stationary pulse solution, $\psi$, and the associated constant-coefficient asymptotic
operator, $\mathcal L_\infty$. 
The linear stability of $\psi$ is determined by the spectrum of $\mathcal L$,
which is the union of the essential and point spectra.
Next, we convert the eigenproblem for $\mathcal L_\infty$
to an unperturbed  first-order system of ordinary differential equations
whose solvability characterizes the essential spectrum. 
As in \cite{Kap,EssSpec} this construction enables us to derive an analytical formula for the
essential spectrum.
Similarly, we convert the eigenproblem for $\mathcal L$
to a perturbed  first-order system of ordinary differential equations
whose solvability characterizes the point spectrum.

The results in sections~\ref{diagonalizationsection} and \ref{BSsection}
build on a general theory of Gesztesy, Latushkin, and Makarov~\cite{EJF} which studies connections between unperturbed and perturbed matrix-valued systems of linear
ordinary differential equations on the real line. In particular, they
provide an intrinsic definition of the Evans
function in terms of  generalized matrix-valued Jost solutions
and prove that the  Evans function, which is a matrix determinant, 
coincides with the 2-modified Fredholm determinant of a
Birman–Schwinger-type integral operator, up to an explicitly computable nonvanishing factor.
Here, we work  out the details of this general theory  in the case that the linear differential operator is given by the linearization, $\mathcal L$, of the CGLE. 
In section~\ref{diagonalizationsection}, we diagonalize the constant-coefficient
unperturbed system.
In section~\ref{BSsection}, we review the definition of the Birman-Schwinger integral operator, $\mathcal K(\lambda)$, associated with the variable-coefficient perturbed system.
In particular, we prove that
$\lambda$ is an eigenvalue of $\mathcal L$ if and only if $\mathcal I + \mathcal K(\lambda)$
is invertible. Unlike the approach taken by Gesztesy, Latushkin, and Makarov~\cite{EJF}, we obtain this result directly rather than via the Evans function of $\mathcal L$.

In section~\ref{HSKernelSection}, we show that if $\psi$ is bounded and 
decays exponentially, then the Birman-Schwinger operator is Hilbert-Schmidt and the 
point spectrum of $\mathcal L$ is given by the zero set of the 2-modified Fredholm determinant, $\det_{2}(\mathcal{I} + \mathcal{K}(\lambda))$. 
However, using the diagonalization result in section~\ref{diagonalizationsection}, we 
derive a $\lambda$-dependent estimate 
which strongly suggests that $\det_{2}(\mathcal{I} + \mathcal{K}(\lambda))$
blows up as $\lambda$ approaches the edge of the essential spectrum. This blow-up
phenomenon, which is confirmed by numerical results in section~\ref{sec:NumResults} and further theory in appendix~\ref{App:TraceK},
is problematic for the numerical computation of eigenvalues of $\mathcal L$ that are near
the edge of the essential spectrum~\cite{Kap,shen2016spectra}. 
In section~\ref{Lipschitzsection}, we establish conditions on the parameters in the CGLE and on the pulse, $\psi$, which guarantee that the matrix-valued kernel of $\mathcal{K}(\lambda)$ is globally Lipschitz-continuous on the real line. 
While this result is specific to the CGLE, 
the nature of the proof suggests that a similar result should hold for any $C^1$ stationary
pulse solution of a nonlinear wave equation for which the operator $\mathcal L$ is of the form $\mathcal L = \mathbf B(x)\partial_x^2 + \mathbf M(x)$, where $\mathbf B(x)$ and $\mathbf M(x)$ are matrix-valued functions.

The main theorems of the paper are given in section~\ref{sec:MainTheorems}. 
First, we apply the results in sections~\ref{sec:Background} and \ref{Lipschitzsection} 
to  identify conditions under which $\mathcal{K}(\lambda)$ is trace class.  
In this situation, as a corollary of a general theorem of 
Gesztesy, Latushkin, and Makarov~\cite{EJF},
we conclude that the regular Fredholm determinant, ${\det}_1(\mathcal I + \mathcal K(\lambda))$, is equal to the Evans function, $E(\lambda)$. 
Given the challenges of numerically solving for the Jost functions~\cite{humpherys2006efficient},
this novel result suggests that an alternate method for numerically analyzing the stability
of nonlinear waves is to compute the zeros of the
Fredholm determinant, ${\det}_1(\mathcal I + \mathcal K(\lambda))$.
To further justify this approach, we apply the general results on numerical Fredholm determinants reviewed in section~\ref{sec:Background} to   obtain a bound on the error between the regular Fredholm determinant of the trace class operator, $\mathcal K(\lambda)$, and its numerical approximation by a matrix determinant.

In section~\ref{sec:NumResults}, we will apply the results in section~\ref{sec:MainTheorems} to
numerically compute $\sigma_{\rm pt}(\mathcal L)$ for both the hyperbolic secant solution
of the nonlinear Schr\"odinger equation (NLSE) and for 
a numerically computed stationary solution of the CGLE.
To validate these results,   
in appendix~\ref{App:EvansSech}, for the hyperbolic secant solution, we derive an 
analytical formula for the Evans function 
defined in \cite{EJF},  
and in appendix~\ref{App:TraceK}, for an 
arbitrary CGLE pulse, 
we derive a formula which shows how 
 the trace of $\mathcal K(\lambda)$ depends on $\lambda$.
To further validate the results for the CGLE pulse, we compare the 
point spectrum to that obtained by Shen, Zweck, and Menyuk~\cite{shen2016spectra} 
using a method that is similar in spirit to numerical Evans function methods, but which involves the iterative solution of a nonlinear eigenproblem. In both cases, the  agreement between the two methods is excellent.

\section{Background on the numerical approximations of Fredholm determinants}
\label{sec:Background}

We refer the reader to Teschl~\cite{TeschlFA}, Simon~\cite{Simon},
Bornemann~\cite{Bornemann}, 
Gohberg, Goldberg, and Krupnik~\cite{GGK},
Gallo, Zweck and Latushkin~\cite{GZL2025NumericalFredholm}, and 
Zweck, Latushkin and Gallo~\cite{zweck2024regularity}
for  background material 
on the spaces of  trace class and Hilbert-Schmidt operators and  their Fredholm determinants. 
We let $\mathcal B_1(\textsf H)$ denote the space of trace class and 
$\mathcal B_2(\textsf H)$ denote the space of Hilbert-Schmidt operators  on a separable
Hilbert space, $\textsf H$. 
If $\mathcal K \in \mathcal B_1(\textsf H)$,  
we let $\det_1(\mathcal I + \mathcal K)$ denote the regular Fredholm determinant of $\mathcal K$, and if $\mathcal K \in \mathcal B_2(\textsf H)$,
we let $\det_2(\mathcal I + \mathcal K)$ denote the 2-modified Fredholm determinant of $\mathcal K$. 

\begin{theorem}[{\cite[Theorem 6.1]{zweck2024regularity}}]\label{TraceClassThmRealLine}
Let $\mathbf K\in L^2(\mathbb R\times \mathbb R, \mathbb C^{k\times k})$ be a 
Lipschitz continuous,  matrix-valued kernel such that 
there is an $R>0$ so that for all $|x|$, $|y|>R$
\begin{equation}\label{KandDerivsexp}
     \operatorname{max} \{  \|\mathbf{K}(x,y)\|, \|\partial_x \mathbf{K}(x,y)\|, \|\partial_y \mathbf{K}(x,y)\| \}
     \leq C e^{-\alpha|x-y|},
    \end{equation}
    for some $C, \alpha > 0$, where $\|\cdot\|$ is a matrix norm.
Let $\mathcal K \in \mathcal{B}_2(L^2(\mathbb R,\mathbb{C}^k))$
be the Hilbert-Schmidt operator with kernel $\mathbf K$. 
Then $\mathcal{K} \in \mathcal{B}_1(L^2(\mathbb R, \mathbb{C}^k))$ is trace class.
\end{theorem}

Let $\mathcal K \in \mathcal B_p(L^2(\mathbb R, \mathbb C^k))$ be a trace class ($p=1$)
or Hilbert-Schmidt ($p=2$) operator with a matrix-valued kernel $\mathbf{K}\in L^2(
\mathbb R\times \mathbb R, \mathbb C^{k\times k})$. 
Building on work of Bornemann~\cite{Bornemann}, in Gallo, Zweck and Latushkin~\cite{GZL2025NumericalFredholm} we showed how to numerically
approximate ${\det}_p(\mathcal I +  \mathcal K)$ by the determinant of a 
block matrix defined in terms of the kernel $\mathbf K$. 
The idea is to first truncate $\mathcal K$ to an operator, $ \mathcal{K}|_{[-L,L]}$, 
 on a finite interval $[-L,L]$,
and then use a quadrature rule to approximate the Fredholm determinant of 
$ \mathcal{K}|_{[-L,L]}$  by the determinant of a block matrix. 
In \cite{GZL2025NumericalFredholm} the authors obtained the following error estimates for  these approximations.

 Let $ \mathcal{K}|_{[-L,L]} := P_L \circ \mathcal{K} \circ \iota_{L}$,
    where $\iota_L : L^2([-L,L],\mathbb{C}^k) \rightarrow L^2(\mathbb{R},\mathbb{C}^k)$ is the inclusion operator and $P_L : L^2(\mathbb{R},\mathbb{C}^k) \rightarrow L^2([-L,L],\mathbb{C}^k)$ is the  projection operator given by
      $  (P_L \boldsymbol{\psi})(x) = \bigchi_{[-L,L]}(x) \boldsymbol{\psi}(x)$,
    where $\bigchi_{[-L,L]}$ is the characteristic function of $[-L,L]$. Since 
    the operators, $\iota_L$ and $P_L$, are bounded and 
    $\mathcal B_p$ is an ideal, $\mathcal{K}|_{[-L,L]} \in \mathcal B_p(L^2([-L, L],\mathbb{C}^k)).$ 
 
\begin{theorem}[{\cite[Theorem~5.1]{GZL2025NumericalFredholm}}]
\label{BackgroundTruncationTheorem}
\label{truncthm}
Let $\mathcal{K} \in \mathcal B_p(L^2(\mathbb{R},\mathbb{C}^k))$, for $p=1$ or $2$, and suppose
that $\exists \, C, a > 0$ such that
\begin{equation}\label{truncexpass}
    |{K}_{ij}(x,y)| \leq Ce^{-a(|x| + |y|)}, \,\, \forall i,j \in \{1,...,k\}, \forall x, y, \in \mathbb{R}.
\end{equation}
Then 
\begin{equation}\label{eq:truncthm}
    \left| {\det}_p(\mathcal{I} +  \mathcal{K}) - {\det}_p (\mathcal{I} +  \mathcal{K}|_{[-L,L]})\right| \leq e^{-aL} \boldsymbol{\Phi} \left( \frac{2Ck}{a} \right),
\end{equation}
where 
\begin{equation}\label{phisum}
    \boldsymbol{\Phi}(z) = \sum_{n=1}^{\infty} \frac{n^{(n+2)/2}}{n!} z^n.
\end{equation}
\end{theorem}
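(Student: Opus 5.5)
Our plan is to compare the two determinants through their Fredholm (Plemelj--Smithies) series, which for matrix-valued kernels can be written in terms of determinants of block matrices. For $p=1$ we use the series
\[
{\det}_1(\mathcal I+\mathcal K)=\sum_{n=0}^\infty \frac{1}{n!}\int_{\mathbb R^n}\det\big[\mathbf K(x_i,x_j)\big]_{i,j=1}^n\,dx_1\cdots dx_n ,
\]
where $[\mathbf K(x_i,x_j)]$ is the $nk\times nk$ block matrix. Because the kernel of $\mathcal K|_{[-L,L]}$ is $\mathbf K$ restricted to $[-L,L]^2$, the truncated determinant is given by the same series with every integral taken over $[-L,L]^n$. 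Their difference is therefore $\sum_{n\ge1}\frac1{n!}\int_{\mathbb R^n\setminus[-L,L]^n}\det[\mathbf K(x_i,x_j)]\,dx$. For $p=2$ we run the same argument on the series for ${\det}_2$, where the block matrices have their diagonal blocks set to zero. Zeroing blocks keeps each column a vector of at most $nk$ entries bounded as below, so every estimate carries over.

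The central estimate is a Hadamard bound. Under \eqref{truncexpass}, the column of the block matrix indexed by $(j,\ell)$ has $nk$ entries, each of absolute value at most $Ce^{-a|x_i|}e^{-a|x_j|}$. Instead of treating all of them together, we factor $e^{-a|x_i|}$ out of each row-block $i$ and $e^{-a|x_j|}$ out of each column-block $j$. This gives
\[
|\det[\mathbf K(x_i,x_j)]|\le \prod_{i=1}^n e^{-2ak|x_i|}\,\big(C\sqrt{nk}\big)^{nk}.
\]
This bound scales badly, with power $nk$ where we want $n$, so we change the bookkeeping. We factor only $e^{-a|x_i|}$ from each row-block and $e^{-a|x_j|}$ from each column-block a single time each. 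The aim is a bound of the form $n^{n/2}(\text{const}\cdot Ck)^n\prod_i e^{-2a|x_i|}$, obtained by grouping the entries of $\mathbf K$ cleverly. We expect this step to be the main obstacle: getting exactly the combination $n^{(n+2)/2}(2Ck/a)^n/n!$ requires handling the block structure carefully, plausibly by expanding the determinant multilinearly over the $k$ scalar components.

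Next we bound the integral over $\mathbb R^n\setminus[-L,L]^n$ using the union bound $\bigcup_{m=1}^n\{|x_m|>L\}$. The integrand factorizes into one-dimensional integrals, with $\int_{\mathbb R}e^{-2a|x|}\,dx=1/a$ and $\int_{|x|>L}e^{-2a|x|}\,dx=e^{-2aL}/a$. This yields at most $n\,e^{-2aL}a^{-n}$ times the Hadamard constant. Alternatively, we can keep part of the exponential weight and obtain the cruder factor $e^{-aL}$ together with $2^n$, which accounts for the $2$ in $2Ck/a$. The extra factor of $n$ turns $n^{n/2}$ into $n^{(n+2)/2}$. Summing over $n$ then gives $e^{-aL}\boldsymbol\Phi(2Ck/a)$, and this series converges because $n^{n/2}/n!$ decays superexponentially.
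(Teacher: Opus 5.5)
Your outline (expand both determinants as Fredholm series, bound the integrand with Hadamard's inequality, integrate over the set where some coordinate leaves $[-L,L]$) is the right one. However, it starts from an incorrect series, and that is exactly why you get stuck at the step you call the main obstacle. For $k\ge 2$, the expression $\sum_n \frac{1}{n!}\int_{\mathbb R^n}\det[\mathbf K(x_i,x_j)]_{i,j=1}^n\,dx$, built from the full $nk\times nk$ block determinant, is not ${\det}_1(\mathcal I+\mathcal K)$. Its $n=1$ term is $\int\det\mathbf K(x,x)\,dx$, which has degree $k$ in $\mathbf K$, whereas the first-order term must be $\operatorname{Tr}\mathcal K=\int\operatorname{tr}\mathbf K(x,x)\,dx$.

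Moreover, the estimate you hope to get by grouping entries cleverly is false for this integrand. The block determinant is homogeneous of degree $nk$ in $\mathbf K$. Replacing $\mathbf K$ by $t\mathbf K$ (so $C$ becomes $tC$) and letting $t\to\infty$ therefore violates any bound of the form $n^{n/2}(c\,Ck)^n\prod_i e^{-2a|x_i|}$. Already for $n=1$ and $\mathbf K(x,y)=Ce^{-a|x|-a|y|}\mathbf I_k$, the left side equals $C^k$ at $x=0$. So the difficulty is not bookkeeping: the integrand itself is wrong, and the step cannot be completed as written.

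The correct expansion comes from identifying $L^2(\mathbb R,\mathbb C^k)$ with $L^2(\mathbb R\times\{1,\dots,k\})$, using Lebesgue measure times counting measure and the scalar kernel $K((x,i),(y,j))=K_{ij}(x,y)$. The $n$th term is then
\[
\frac1{n!}\sum_{i_1,\dots,i_n=1}^{k}\int_{\mathbb R^n}\det\bigl[K_{i_pi_q}(x_p,x_q)\bigr]_{p,q=1}^{n}\,dx_1\cdots dx_n .
\]
For $p=2$ the same formula holds with the entries $p=q$ replaced by $0$. The truncated determinant is the same sum with $[-L,L]^n$ in place of $\mathbb R^n$.

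Each summand is an $n\times n$ determinant. Factor $e^{-a|x_p|}$ out of row $p$ and column $p$, then apply Hadamard's inequality: this bounds it by $n^{n/2}C^n\prod_p e^{-2a|x_p|}$. The $k^n$ index tuples supply the factor $k^n$, which gives exactly the bound you were aiming for, with constant $1$.

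Your union-bound computation then goes through unchanged. The $n$th term of the difference is at most $e^{-2aL}\,\frac{n^{(n+2)/2}}{n!}\,(Ck/a)^n$. Summing, the total is at most $e^{-2aL}\boldsymbol\Phi(Ck/a)\le e^{-aL}\boldsymbol\Phi(2Ck/a)$, because $\boldsymbol\Phi$ has nonnegative coefficients. No alternative bookkeeping is needed.

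One further caveat: for $p=1$ the Fredholm series uses the diagonal values $K_{ii}(x,x)$. It therefore requires a kernel representative for which these values are meaningful, for example a continuous kernel, as for the Lipschitz kernels used later in this paper. The zero-diagonal series for $p=2$ has no such issue.
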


\begin{remark}
Bornemann \cite{Bornemann} shows that
$\boldsymbol{\Phi}(z) \leq z \boldsymbol{\Psi}(z\sqrt{2} e)$,
where
\begin{equation}
    \boldsymbol{\Psi}(z) = 1 + \frac{\sqrt{\pi}}{2}z e^{z^2/4} \left[1 + \erf\left(\frac{z}{2}\right) \right].
\end{equation}
\end{remark}

The matrix approximation of the Fredholm determinant of $\mathcal K|_{[-L,L]}$ is defined
in terms of a quadrature rule  for scalar-valued functions, $f :[-L,L]\to\mathbb{C}$.
Specifically, we consider quadrature rules  of the form~\cite{atkinson2008introduction}
\begin{equation}
    Q_M(f) = \sum_{i=1}^M w_i f(x_i),
\end{equation}
that are defined in terms of $M$ nodes, $-L \leq x_1 < x_2 < \dots < x_M \leq L$, and positive weights $w_1, \dots, w_M.$ We suppose that this family of quadrature rules
converges for continuous functions in that 
\begin{equation}
    Q_M(f) \to \int_{-L}^L f(x)dx, \,\, \text{ as } M \to \infty,
\end{equation}
for all $f \in C^0([-L,L],\mathbb{C})$.
For the results in this paper we employ the  composite Simpson's quadrature rule
given by
\begin{equation}\label{Qab}
    Q_{M}(f) = \sum_{k=1}^{2M+1} w_k f(x_k),
\end{equation}
where 
$w_1 = \frac{\Delta x_1}{6}$,  $w_{2M+1} = \frac{\Delta x_M}{6}$, 
$w_{2j} = \frac{2 \Delta x_j}{3}$, and $w_{2j+1} = \frac{(\Delta x_j + \Delta x_{j+1})}{6}$ for $j=1, \dots, M-1$.

Let $\mathbf{K}_Q \in \mathbb{C}^{kM \times kM}$  be the $M \times M$ block matrix 
\begin{equation}\label{QM4}
    \mathbf{K}_Q = \begin{bmatrix} w_1\mathbf{K}(1,1) & w_2 \mathbf{K}(1,2) & \dots & w_M \mathbf{K}(1,M)\\
    w_1 \mathbf{K}(2,1) & w_2 \mathbf{K}(2,2) & \dots & w_M \mathbf{K}(2,M)\\
    \vdots & & & \vdots\\
    w_1 \mathbf{K}(M,1) & w_2 \mathbf{K}(M,2) & \dots & w_M \mathbf{K}(M,M)
    \end{bmatrix},
\end{equation}
where 
   $ \mathbf{K}(\alpha, \beta) := \mathbf {K}(x_{\alpha}, x_{\beta}) \in \mathbb{C}^{k \times k}$
is the $k \times k$ matrix obtained by evaluating the kernel, $\mathbf K$, at the nodes  $x_{\alpha}, x_{\beta} \in \{x_i\}_{i=1}^M$ of the quadrature rule, $Q_M$.
Then the matrix determinant approximations of the Fredholm determinants,
${\det}_p (\mathcal{I} + \mathcal{K}|_{[-L,L]})$,
 are defined by
\begin{equation}\label{eq:dpQdef}
    d_{1,Q} \,\,=\,\, \det[\mathbf{I}_{kM \times kM} +  \mathbf{K}_{Q}]
    \quad\text{and}\quad
    d_{2,Q} \,\,=\,\, e^{-\operatorname{Tr}(\mathbf{K}_Q)}
 {\det}[\mathbf{I}_{kM \times kM} + \mathbf{K}_Q].
    \end{equation}

Let  $C^{r,1}([-L,L], \mathbb{C})$ be the space of functions whose  
$r$-th derivative is Lipschitz continuous.  

\begin{theorem}[{\cite[Theorem~6.3]{GZL2025NumericalFredholm}}]
\label{quadphibound}
Let $\mathcal{K}\in \mathcal B_p(L^2([-L,L],\mathbb{C}^k))$ be an operator with matrix-valued kernel ${\mathbf{K}} \in L^2([-L,L]\times[-L,L], \mathbb{C}^{k \times k}).$ Suppose that for some $1\leq r\leq 4$,
${K}_{ij} \in C^{r-1,1}([-L,L] \times [-L,L], \mathbb{C})$ for all $i,j \in \{1, \dots, k\}$.
Let $Q$ be an adaptive composite Simpson's quadrature rule on $[-L,L]$ with maximum grid 
spacing $\Delta x_{\operatorname{max}}$. Then
\begin{equation}\label{quaderrest}
    |d_{p,Q} - \operatorname{det}_p(\mathcal I + \mathcal K)| \leq \frac{2 (\pi e /8)^r}{(2\pi r)^{1/2}}  \boldsymbol{\Phi}\left(4kL\|\mathbf{K}\|_{r} \right) (\Delta x_{\operatorname{max}})^r,
\end{equation}
where   $ \boldsymbol{\Phi}$ is defined in \eqref{phisum}, and
\begin{equation}\label{newnorm}
\|\mathbf{K}\|_{r} = \max\limits_{i+j\leq r} \| \partial^i_x\partial^j_y \mathbf{K} \|_{L^\infty([-L,L]^2,\mathbb C^{k\times k})}.\end{equation}
\end{theorem}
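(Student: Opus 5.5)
The plan is to reduce the matrix-valued estimate to Bornemann's scalar argument for quadrature approximations of Fredholm determinants. The main tool is the von Koch series
\[
{\det}_1(\mathcal I+\mathcal K)=\sum_{n\ge0}\frac1{n!}\int_{[-L,L]^n}\det\bigl[K_{i_a i_b}(x_a,x_b)\bigr]\,dx ,
\]
interpreted as an expansion over the $kn\times kn$ block determinants $\det[\mathbf K(x_a,x_b)]_{a,b=1}^n$. The discrete quantity $d_{1,Q}=\det(\mathbf I+\mathbf K_Q)$ has exactly the same expansion with each integral replaced by the product quadrature $Q^{\otimes n}$. Indeed, expanding $\det(\mathbf I+\mathbf K_Q)$ into principal minors and grouping the indices by node gives
\[
d_{1,Q}=\sum_n\frac1{n!}\,Q^{\otimes n}\bigl(F_n\bigr),\qquad F_n(x_1,\dots,x_n)=\det[\mathbf K(x_a,x_b)]_{a,b=1}^n .
\]
Here the weight $w_\beta$ attached to each block column factors out of the block determinant. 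For $p=2$ the same holds with the regularized entries, i.e.\ with the diagonal blocks $\mathbf K(x_a,x_a)$ zeroed out in the Plemelj--Smithies form. This is consistent with the definition of $d_{2,Q}$ in \eqref{eq:dpQdef}, since the factor $e^{-\Tr\mathbf K_Q}$ is exactly what removes the diagonal contributions. So in both cases
\[
|d_{p,Q}-{\det}_p|\le\sum_{n\ge1}\frac1{n!}\,\bigl|Q^{\otimes n}(F_n)-\textstyle\int F_n\bigr| .
\]

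Next I would bound the quadrature error for each term. For composite Simpson with $K_{ij}\in C^{r-1,1}$, a Peano-kernel argument gives the one-dimensional error bound
\[
|Q(f)-\textstyle\int f|\le c_r\,(2L)\,(\Delta x_{\max})^r\,\|f^{(r)}\|_\infty ,
\]
where for $r<4$ one uses the $C^{r-1,1}$ seminorm in place of $\|f^{(r)}\|_\infty$. The tensor-product error is handled by telescoping over coordinates:
\[
Q^{\otimes n}-\textstyle\int^{\otimes n}=\sum_{m}Q^{\otimes(m-1)}\otimes(Q-\textstyle\int)\otimes\textstyle\int^{\otimes(n-m)} .
\]
Since the weights are positive and sum to $2L$, each summand is bounded by $(2L)^{n-1}$ times the one-dimensional error in one variable. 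The explicit constant $2(\pi e/8)^r/(2\pi r)^{1/2}$ would come from Bornemann's estimate of the Peano constants for Simpson's rule, combined with Stirling's formula. I would take that constant over directly.

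The main obstacle is bounding $\partial_{x_m}^{r}F_n$ uniformly. The variable $x_m$ appears in the $m$-th block row and block column, so $kn$ rows and columns in total. Differentiating the determinant $r$ times and using multilinearity distributes the derivatives among the entries of these rows and columns. Each entry is of the form $\partial_x^i\partial_y^jK$ with $i+j\le r$, hence bounded by $\|\mathbf K\|_r$ as in \eqref{newnorm}. To control the resulting determinants, I would apply Hadamard's inequality to the $kn\times kn$ matrix with entries bounded by $\|\mathbf K\|_r$, which gives $(kn)^{kn/2}\|\mathbf K\|_r^{kn}$. This is too crude compared with the target $n^{n/2}$, so the entries should be grouped by node. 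Following the paper's earlier approach \cite{GZL2025NumericalFredholm}, I would use Hadamard's inequality on block columns, each of Euclidean norm at most $k\sqrt n\,\|\mathbf K\|_r$ per node, to get a bound of the form $n^{n/2}(k\|\mathbf K\|_r)^n$ times combinatorial factors $2^r$ from the product rule. Absorbing these factors, together with the factor $n$ from the telescoping sum and the factor $(2L)^n$, into $n^{(n+2)/2}(4kL\|\mathbf K\|_r)^n/n!$ and summing over $n$ yields $\boldsymbol\Phi(4kL\|\mathbf K\|_r)$, which is the bound \eqref{quaderrest}.
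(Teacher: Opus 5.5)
Your overall architecture is the right one: the Fredholm/Plemelj--Smithies series, identifying $d_{p,Q}$ with the same series under the product rule $Q^{\otimes n}$, telescoping the tensor-product error, and Bornemann's derivative-plus-Hadamard lemma. The gap is in the first step. You write the matrix-valued series with $kn\times kn$ block determinants, and that series is wrong. Everything afterwards rests on it.

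The $n$-th term of the Fredholm series of a $k\times k$ matrix kernel is not $\frac1{n!}\int\det[\mathbf K(x_a,x_b)]_{a,b=1}^n\,dx$. Already for $n=1$ the correct term is $\Tr\mathcal K=\int\Tr\mathbf K(x,x)\,dx$, not $\int\det\mathbf K(x,x)\,dx$.

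The discrete side fails in the same way. The principal-minor expansion of $\det(\mathbf I+\mathbf K_Q)$ runs over all subsets of the $kM$ indices, not just unions of full blocks. For a single node, $\det(\mathbf I+w\mathbf K)$ has the linear term $w\Tr\mathbf K$, which your formula misses. Also, pulling $w_\beta$ out of a block column of width $k$ gives $w_\beta^k$, not $w_\beta$.

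The correct expansion treats $\mathbf K$ as a scalar kernel on $[-L,L]\times\{1,\dots,k\}$, with Lebesgue times counting measure:
\begin{equation*}
{\det}_1(\mathcal I+\mathcal K)=\sum_{n\ge0}\frac1{n!}\sum_{i_1,\dots,i_n=1}^{k}\int_{[-L,L]^n}\det\left[K_{i_ai_b}(x_a,x_b)\right]_{a,b=1}^{n}dx_1\cdots dx_n .
\end{equation*}
Then $d_{1,Q}$ is exactly this series with each integral replaced by $Q^{\otimes n}$, because $\mathbf K_Q$ is the Nystr\"om matrix of that scalar kernel. For $p=2$, zero only the entries with $a=b$ in these $n\times n$ determinants. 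That corresponds to the diagonal of $\mathbf K_Q$, which is what $e^{-\Tr\mathbf K_Q}$ removes. It does not correspond to zeroing the whole diagonal blocks $\mathbf K(x_\alpha,x_\alpha)$, whose off-diagonal entries survive.

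As a result, the key estimate you aim for is false, not merely hard. Your $F_n$ is homogeneous of degree $kn$ in $\mathbf K$. So for $k\ge2$ no bound of the form $n^{n/2}(k\|\mathbf K\|_r)^n$ can hold: replace $\mathbf K$ by $t\mathbf K$ and let $t\to\infty$. Grouping by block columns (Fischer's inequality) only gives bounds of degree $kn$.

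With the corrected expansion the obstacle disappears. Each $n\times n$ determinant depends on $x_m$ through exactly one row and one column. The product rule and Hadamard's inequality then give $\|\partial_{x_m}^r\det[K_{i_ai_b}(x_a,x_b)]\|_\infty\le 2^r n^{n/2}\|\mathbf K\|_r^n$. The $k^n$ multi-indices supply the factor $k^n$, and the telescoping supplies $n(2L)^n$ times the one-dimensional Simpson constant and $\Delta x_{\max}^r$.

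Finally, the factor $2^r$ cannot be absorbed into the argument of $\boldsymbol\Phi$, since it exceeds $2^n$ when $n<r$. It belongs in the prefactor, because $\frac{2(\pi e/8)^r}{\sqrt{2\pi r}}=2^r\cdot\frac{2(\pi e/16)^r}{\sqrt{2\pi r}}$. The second factor is the per-panel constant for a rule exact on cubics, obtained from Jackson's theorem plus Stirling. Summing over $n$ then gives the bound with $\boldsymbol\Phi(2kL\|\mathbf K\|_r)\le\boldsymbol\Phi(4kL\|\mathbf K\|_r)$.
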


\begin{remark}
Since $\Phi$ grows super-exponentially as a function of it argument the constants
in the bounds in \eqref{eq:truncthm} and \eqref{quaderrest} are likely to be far from optimal.
This is because these bounds  ignore all signs in the determinant.
\end{remark}

\section{Linearization of the Complex Ginzburg-Landau Equation}\label{linearizationsection}

We consider stationary solutions of the cubic-quintic complex Ginzburg-Landau equation (CGLE),
\begin{equation}\label{cqcgle}
    i\psi_t + \frac{D}{2}\psi_{xx} + \gamma |\psi|^2\psi + \nu |\psi|^4\psi = i\delta \psi + i \epsilon |\psi|^2 \psi + i\beta \psi_{xx} + i \mu |\psi|^4 \psi,
\end{equation}
that are of the form
\begin{equation}
    \psi(t,x) = e^{-i \alpha t}\,\psi(x), \quad \text{for } t \geq 0, x\in \mathbb R,
\end{equation}
and for some phase change $\alpha$. 
We assume throughout that   $(D, \beta) \neq (0,0)$.
 As in \cite{EssSpec},  we set $\boldsymbol{\psi} = [\text{Re}({\psi}) \,\,\, \text{Im}({\psi})]^T: \mathbb R \to \mathbb R^2$
and reformulate \eqref{cqcgle}  to obtain
\begin{equation}\label{cq3}
    \partial_t \boldsymbol{\psi} = \left( \textbf{B}\partial_x^2 + \textbf{N}_0 + \textbf{N}_1|\boldsymbol{\psi}|^2 + \textbf{N}_2|\boldsymbol{\psi}|^4\right) \boldsymbol{\psi},
\end{equation}
where 
\begin{eqnarray}\label{eq:CGLE2x2matrices}
    \textbf{B} = \begin{bmatrix}
    \beta & -\frac{D}{2}\\
    \frac{D}{2} & \beta 
    \end{bmatrix}, \,\,\,\,
\textbf{N}_0 = \begin{bmatrix}
\delta & -\alpha\\ \alpha & \delta
\end{bmatrix}, \,\,\,\,
\textbf{N}_1 = \begin{bmatrix}
\epsilon & -\gamma\\ \gamma & \epsilon
\end{bmatrix},\,\,\,\,
\textbf{N}_2 = \begin{bmatrix}
\mu & -\nu \\ \nu & \mu
\end{bmatrix}.
\end{eqnarray}
Linearizing (\ref{cq3}) about a stationary solution $\boldsymbol{\psi},$ we obtain the equation 
\begin{equation}\label{linbasic}
    \partial_t \mathbf{p} = \mathcal{L}\mathbf{p},\quad \text{with }
    \mathcal{L}= \textbf{B}\partial_x^2 + \widetilde{\textbf{M}}(x),
\end{equation}
where $\widetilde{\textbf{M}}= \widetilde{\textbf{M}}(x)$ is the multiplication operator given by
\begin{equation}\label{MTilde}
    \widetilde{\textbf{M}} = \textbf{N}_0 + \textbf{N}_1|\boldsymbol\psi|^2 + \textbf{N}_2|\boldsymbol\psi|^4 + \left( 2\textbf{N}_1 + 4\textbf{N}_2|\boldsymbol\psi|^2\right)\boldsymbol\psi \boldsymbol\psi^T.
\end{equation}
In \eqref{linbasic} we regard $\mathbf p$ as a mapping, $\mathbf p : \mathbb R \to \mathbb C^2$, since the spectrum of the
non-self adjoint operator, $\mathcal L$, is not constrained to be real.

Zweck et. al \cite{EssSpec} showed that if $\boldsymbol{\psi}$ and its weak derivative, 
$\boldsymbol{\psi}_x$, are bounded on $\mathbb{R}$ and $\boldsymbol{\psi}$ decays exponentially as $x \rightarrow \pm \infty,$ then the linear operator,
$    \mathcal{L}  : H^2(\mathbb{R},\mathbb{C}^2) \subset L^2(\mathbb{R},\mathbb{C}^2) \rightarrow L^2(\mathbb{R},\mathbb{C}^2)$,
is closed, and therefore has a spectrum. The linear stability of the stationary pulse $\boldsymbol{\psi}$ is determined by the spectrum of $\mathcal{L}.$

We recall  \cite{EdmundsEvans,Kato} that 
the \emph{spectrum} of $\mathcal{L}$ is defined by
$    \sigma(\mathcal{L}) := \mathbb{C} \setminus \rho(\mathcal{L})$
where  
$    \rho(\mathcal{L}) := \{ \lambda \in \mathbb{C} | (\mathcal{L} - \lambda \mathcal{I})  \text{ is invertible and } (\mathcal{L} - \lambda \mathcal{I})^{-1} \text{ is bounded}\}$
is the \emph{resolvent set}. 
The \emph{point spectrum } of $\mathcal{L}$ is 
\begin{equation}
    \sigma_{\text{pt}}(\mathcal{L}) := \{ \lambda \in \mathbb{C} \, | \, 
 \text{Ker}(\mathcal{L} - \lambda \mathcal{I}) \neq \{0\} \},
\end{equation}
and the \emph{Fredholm point spectrum} of $\mathcal{L}$ is the subset of $\sigma_{\text{pt}}(\mathcal{L})$ such that
\begin{equation}
    \sigma_{\text{pt}}^{\mathcal{F}}(\mathcal{L}) := \{ \lambda \in \mathbb{C} | (\mathcal{L} - \lambda \mathcal{I}) \text{ is Fredholm, } \text{Ind}(\mathcal{L} - \lambda \mathcal{I}) = 0, \text{ and Ker}(\mathcal{L} - \lambda \mathcal{I}) \neq 0\},
\end{equation}
where $\text{Ind}$ denotes the Fredholm index. 
Then the \emph{essential spectrum} of $\mathcal{L}$ is defined by 
\begin{equation}
    \sigma_{\text{ess}}(\mathcal{L}) := \sigma(\mathcal{L}) \setminus \sigma_{\text{pt}}^{\mathcal{F}}(\mathcal{L}) .
\end{equation}
Then the spectrum of $\mathcal{L}$ is  given by
    $\sigma(\mathcal{L}) = \sigma_{\text{ess}}(\mathcal{L})\cup \sigma_{\text{pt}}(\mathcal{L})$,
although this union may not be disjoint. 
Both the essential spectrum and the point spectrum of the operator $\mathcal{L}$ in (\ref{linbasic}) are computed with the aid of the asymptotic differential operator, $\mathcal{L}_{\infty}$. To define this operator,
we assume that 
\begin{equation}
    \lim_{x \to \pm \infty} \|\boldsymbol\psi(x)\|_{\mathbb{C}^2} = 0,
\end{equation}
so that
\begin{equation}
   \textbf{ M}_\infty := \lim_{x \to \pm \infty} \widetilde{\textbf{M}}(x) = \textbf{N}_0.
\end{equation}

As in \cite[Definition 3.1]{EssSpec}, the asymptotic differential operator, $\mathcal{L}_\infty$, associated with $\mathcal{L}$ is defined by
\begin{equation}\label{operatorLinf}
    \mathcal{L}_\infty = \textbf{B}\partial_x^2 + \textbf{M}_\infty = \textbf{B}\partial_x^2 + \textbf{N}_0.
\end{equation}
To obtain the spectrum of  $\mathcal{L}_\infty,$ we convert the second-order differential equation $(\mathcal{L}_\infty - \lambda)\textbf{p} = 0$ to the \emph{unperturbed} first-order system,
\begin{eqnarray}\label{Ainfdiffeq}
\partial_x \textbf{u} = \textbf{A}_\infty(\lambda) \textbf{u}, \qquad \text{for }
\textbf{u}  : \mathbb R \to \mathbb C^4,
\end{eqnarray}
by setting $\textbf{u} = [\textbf{p} \,\,\, \textbf{p}_x]^T$,  where
 \begin{equation}\label{A}
     \textbf{A}_\infty(\lambda) = \begin{bmatrix}
     0 & \textbf{I}\\ \textbf{B}^{-1}(\lambda - \textbf{N}_0) & 0
     \end{bmatrix}.
 \end{equation}
 We observe that there is a solution
 $\mathbf p\in H^2(\mathbb R, \mathbb C^2)$ of
 $(\mathcal{L}_\infty - \lambda)\textbf{p} = 0$ if and only if 
there is a solution
 $\mathbf u \in H^1(\mathbb R, \mathbb C^4)$ of \eqref{Ainfdiffeq}.

 It has been shown that
the operator $\mathcal{L}$ is  a \emph{relatively compact perturbation} of $\mathcal{L}_{\infty}$ \cite{Kap,EssSpec}, by which we mean that $\exists \lambda \in \rho(\mathcal{L}_{\infty})$ such that $(\mathcal{L} - \mathcal{L}_{\infty})(\mathcal{L}_{\infty} - \lambda \mathcal{I})^{-1} : \mathbb{R} \rightarrow \mathbb{R}$ is a compact operator. Then by Weyl's essential spectrum theorem \cite{Kap}, 
\begin{equation}\label{sigmas}
\sigma_{\text{ess}} (\mathcal{L}) = \sigma_{\text{ess}}(\mathcal{L}_{\infty}) = \sigma(\mathcal{L}_{\infty}) = \{ \lambda \in \mathbb{C} \, : \, \exists \, \mu \in \mathbb{R} : \det[\textbf{A}_{\infty}(\lambda) - i \mu] = 0 \}. 
\end{equation}
That is, $\lambda \in \sigma_{\text{ess}}(\mathcal{L}_{\infty}) $ if and only if the matrix $\textbf{A}_{\infty}(\lambda)$ has a purely imaginary eigenvalue.
A calculation~\cite{EssSpec} shows that
\begin{equation}\label{ess2}
    \sigma_{\text{ess}}(\mathcal{L}_{\infty}) = \left\{ \lambda \in \mathbb{C} \, : \, \lambda = (\delta \pm i \alpha) - \mu^2 \left( \beta \pm i \frac{D}{2} \right) \, \text{ for some } \mu \in \mathbb{R} \right\},
\end{equation} 
is a pair of lines and is stable when the loss parameter, $\delta$, is negative.

Similarly, to characterize the point spectrum, $ \sigma_{\text{pt}}(\mathcal{L})$, 
we convert the second-order differential equation $(\mathcal{L} - \lambda)\textbf{p} = 0$ to the \emph{perturbed} first-order system
\begin{equation}\label{systempert}
    \partial_x \textbf{u} = [\textbf{A}_{\infty}(\lambda) + \textbf{R}(x)]\textbf{u},
\end{equation}
where 
\begin{equation}\label{Rpertdef}
    \textbf{R}(x) = \begin{bmatrix}
    \textbf{0} & \textbf{0}\\ -\textbf{B}^{-1}\textbf{M}(x) & \textbf{0}
    \end{bmatrix},
\qquad\text{with } 
    \textbf{M}(x) := \widetilde{\textbf{M}}(x) - \textbf{N}_0.
\end{equation}
We observe that $\lambda \in \sigma_{\text{pt}}(\mathcal{L})$ if and only if 
there is a solution 
 $\mathbf u \in  H^1(\mathbb R, \mathbb C^4)$
of \eqref{systempert}.

Since the multiplication operator $\mathbf R(x) \to \mathbf 0$
as $x\to\pm\infty$, solutions of the perturbed problem \eqref{systempert}
converge to solutions of the unperturbed problem \eqref{Ainfdiffeq} as $x\to\pm\infty$.
Recall that the decay rates of solutions to the unperturbed problem depend on the
eigenvalues of the matrix $\textbf{A}_{\infty}(\lambda)$.
Consequently, knowledge of the eigenvalues of the matrix $\textbf{A}_{\infty}(\lambda)$
is helpful for determining the point spectrum of $\mathcal L$.

\section{Diagonalization of the Unperturbed System}\label{diagonalizationsection}

In this section, we calculate the eigenvalues of the matrix $\textbf{A}_{\infty}(\lambda)$ in \eqref{A} as functions of the spectral parameter, $\lambda \in \mathbb{C}$. 
In addition, we identify conditions which guarantee that $\textbf{A}_{\infty}(\lambda)$ is diagonalizable. The explicit diagonalization of $\textbf{A}_{\infty}(\lambda)$
given in Proposition~\ref{prop:diagzbl} below will be used to numerically compute the matrix
exponential, $\exp(\textbf{A}_{\infty}(\lambda)x)$, which is a factor in the kernel
of the Birman-Schwinger operator, $\mathcal K$, given in Proposition~\ref{kktildej2}. 
This calculation is key since the point spectrum   of $\mathcal L$ is given by the zeros  
of the Fredholm determinant of $\mathcal K$. 
In addition, the diagonalization of $\textbf{A}_{\infty}(\lambda)$ 
will be used in Theorem~\ref{Knormbd} to derive a $\lambda$-dependent
bound on the 2-modified Fredholm determinant of $\mathcal K$.

Since we are assuming that  $(D,\beta) \neq (0,0)$  the matrix
\begin{equation}
    \widehat{\textbf{B}} = \begin{bmatrix}
    0 & \textbf{B}\\
    \textbf{B} & 0
    \end{bmatrix}
\end{equation}
is invertible. Premultiplying $\textbf{A}_{\infty} - \sigma \textbf{I}$ by $\widehat{\textbf{B}}$ and applying the Schur determinant formula, we find that 
\begin{equation}\label{eq:BhatAinfeig}
    \det(\widehat{\textbf{B}}(\textbf{A}_{\infty}(\lambda) - \sigma \textbf{I})) = \det(\textbf{B})\det(\lambda - \textbf{N}_0 - \sigma^2\textbf{ B}).
\end{equation}
Therefore, $\sigma$ is an eigenvalue of $\textbf{A}_{\infty}(\lambda)$ if and only if
\begin{equation}\label{det0}
   0\,\,=\,\, \det(\lambda - \textbf{N}_0 - \sigma^2 \textbf{B}) \,\,=\,\,
   \det \begin{bmatrix}
    \lambda - \delta - \sigma^2 \beta & \alpha + \sigma^2 \frac{D}{2}\\
    -(\alpha + \sigma^2 \frac{D}{2}) & \lambda - \delta - \sigma^2 \beta
    \end{bmatrix},
\end{equation}
which implies that the eigenvalues of $\textbf{A}_{\infty}(\lambda)$  
are of the form $\{ \pm\sigma_-, \pm\sigma_+ \}$, where 
\begin{equation}\label{eq:sigmapmdef2}
\lambda - \delta - (\sigma\pm)^2\beta \,\,=\,\, \pm i (\alpha + (\sigma\pm)^2 D/2).
\end{equation}

\begin{remark}
By \cite[Proposition 3.2]{zweck2021essential} and $\eqref{det0}$, the essential spectrum is given by
\begin{equation}
\sigma_{\rm ess}(\mathcal L) \,\,=\,\,\{ \lambda \in \mathbb C \,\,: \,\, 
\lambda= \delta \pm i\alpha - (\beta \pm i D/2) \mu^2, \, \mu\in\mathbb R \},
\end{equation}
which is a pair of half-lines with edges at $\lambda= \delta \pm i\alpha$.
In particular,  $\sigma_{\rm ess}(\mathcal L)$ lies in the left-half plane
if $\delta < 0$ and $\beta \geq 0$.
\end{remark}

Suppose that $\lambda \notin \sigma_{\text{ess}}(\mathcal{L}_{\infty})$. 
By  \eqref{sigmas},
$\sigma_\pm \notin i\mathbb R$ and so $\arg(\sigma_\pm^2)\neq -\pi$.
Therefore, we can define
\begin{equation}\label{eq:sigmapmdef}
\sigma_\pm := \sqrt{a\pm ib}, \quad\text{where}\quad
a = \frac{\beta(\lambda - \delta) - \frac{D \alpha}{2}}{\det \textbf{B}} 
\qquad\text{and}\quad
b = \frac{-[\frac{D }{2}(\lambda - \delta) + \alpha\beta]}{ \det \textbf{B}}
\end{equation}
are complex numbers, and
$\sqrt{\cdot}$ denotes the principal branch of the complex square root.
In particular, 
\begin{equation}\label{eq:kappas}
\sigma_\pm = \kappa_\pm + i \eta_\pm, \quad\text{where } \kappa_\pm, \eta_\pm \in \mathbb R 
\text{ and } \kappa_\pm > 0.
\end{equation}
Since the four eigenvalues of $\textbf{A}_\infty(\lambda)$ are given by 
$\{ \pm \sigma_+, \pm \sigma_-\}$, we have the following result.

\begin{remark}\label{rem:sigmaCA}
Since $a\pm ib$ lies on the negative real axis precisely when $\lambda \in \sigma_{\rm ess}(\mathcal L)$, the functions $\lambda \mapsto \sigma_\pm(\lambda)$
are complex analytic on $\mathbb C\setminus \sigma_{\rm ess}(\mathcal L)$.
\end{remark}

\begin{proposition}\label{thm1ev}
Suppose that $(\beta, D) \neq (0,0)$ and $\lambda \notin \sigma_{\text{ess}}(\mathcal{L}_{\infty}).$ Then the matrix $\textbf{A}_\infty(\lambda)$ has two eigenvalues with positive real part, and two eigenvalues with negative real part.
\end{proposition}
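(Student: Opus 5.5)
The plan is to read the statement off from the eigenvalue structure of $\textbf{A}_\infty(\lambda)$ already derived above. By \eqref{eq:BhatAinfeig} and \eqref{det0}, $\sigma$ is an eigenvalue of $\textbf{A}_\infty(\lambda)$ if and only if $\det(\lambda-\textbf{N}_0-\sigma^2\textbf{B})=0$. This is a polynomial equation of degree four in $\sigma$, with leading coefficient $\det\textbf{B}=\beta^2+D^2/4\neq 0$, and it depends only on $\sigma^2$.

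First I will verify that the roots, counted with algebraic multiplicity, are exactly $\pm\sigma_+$ and $\pm\sigma_-$ as defined in \eqref{eq:sigmapmdef}. Expanding \eqref{det0} factors it as
\[
(\lambda-\delta-\sigma^2\beta)^2+(\alpha+\sigma^2D/2)^2
=\prod_{\pm}\bigl(\lambda-\delta-\sigma^2\beta\mp i(\alpha+\sigma^2 D/2)\bigr).
\]
Each factor is linear in $\sigma^2$ with coefficient $-(\beta\pm iD/2)$, which is nonzero since $(\beta,D)\neq(0,0)$. Solving each factor for $\sigma^2$ gives $\sigma^2=a\pm ib$, with $a,b$ as in \eqref{eq:sigmapmdef}; this is a routine check using $(\beta\pm iD/2)^{-1}=(\beta\mp iD/2)/\det\textbf{B}$. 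Hence the characteristic polynomial of $\textbf{A}_\infty(\lambda)$ equals $(\sigma^2-(a+ib))(\sigma^2-(a-ib))$ up to a nonzero constant, since $\det\widehat{\textbf{B}}\neq 0$. Its four roots with multiplicity are therefore $\pm\sqrt{a+ib}$ and $\pm\sqrt{a-ib}$.

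Next I use the hypothesis $\lambda\notin\sigma_{\rm ess}(\mathcal L_\infty)$. By \eqref{sigmas}, $\textbf{A}_\infty(\lambda)$ has no purely imaginary eigenvalue. Equivalently, neither $a+ib$ nor $a-ib$ lies in $(-\infty,0]$: if $a\pm ib=-\mu^2$ with $\mu\in\mathbb R$, then $i\mu$ would be an eigenvalue (the case $\mu=0$ gives the eigenvalue $0\in i\mathbb R$). For $w\notin(-\infty,0]$, the principal square root satisfies $\Re\sqrt w>0$. So $\kappa_\pm=\Re\sigma_\pm>0$, as recorded in \eqref{eq:kappas}.

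It follows that $\sigma_+$ and $\sigma_-$ have positive real part, while $-\sigma_+$ and $-\sigma_-$ have negative real part. Counting with algebraic multiplicity, this gives two eigenvalues in each open half-plane. This holds even if $\sigma_+=\sigma_-$, which happens when $b=0$, since the factorization accounts for multiplicity.

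The only step needing care is the multiplicity bookkeeping, that is, confirming that the quartic factors exactly as above with the correct leading constant. The sign conclusion itself is immediate from the branch choice.
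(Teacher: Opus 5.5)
Your proof is correct and follows the same route as the paper. The paper likewise reads the eigenvalues $\pm\sigma_\pm$, with $\sigma_\pm^2=a\pm ib$, off from \eqref{det0}, and then uses \eqref{sigmas} to rule out purely imaginary eigenvalues, so that the principal square root gives $\kappa_\pm>0$. Your explicit check of algebraic multiplicities, which covers the repeated-eigenvalue cases of Proposition~\ref{thmSigmaNotDistinct}, and your exclusion of $a\pm ib=0$ are somewhat more careful than the paper's one-line justification via $\arg(\sigma_\pm^2)\neq-\pi$.
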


\begin{hypothesis}\label{hyp:BDL}
We assume that $\beta \geq 0$, $(D,\beta) \neq (0, 0)$, and $\lambda \notin \sigma_{\text{ess}}(\mathcal{L}_{\infty})$.
\end{hypothesis}

\begin{proposition}\label{thmSigmaNotDistinct}
Suppose Hypothesis \ref{hyp:BDL} holds.
Then we have the following trichotomy. 
\begin{enumerate}
\item[(1)] $\textbf{A}_{\infty}(\lambda)$ has four distinct eigenvalues.
    \item[(2)] $(D,\alpha) = (0,0)$, and if $\lambda \in \mathbb{R}$, then $\lambda > \delta$. In this
     case $\textbf{A}_{\infty}(\lambda)$ has two repeated eigenvalues \begin{equation}\label{sigDzero}
        \sigma(\textbf{A}_{\infty}(\lambda)) = \pm \sqrt{\frac{\lambda - \delta}{\beta}}.
                \end{equation}\\
         \item[(3)] $D \neq 0$ and $\frac{\alpha}{D} < 0$.  In this case 
         $\lambda = \frac{-2\alpha \beta}{D} + \delta$ and
        $\textbf{A}_{\infty}(\lambda)$ has two repeated eigenvalues
        \begin{equation}\label{sigDnonzero}
            \sigma(\textbf{A}_{\infty}(\lambda)) = \pm \sqrt{\frac{-2\alpha}{D}}
            = \pm \sqrt{\frac{\lambda - \delta}{\beta}}.
        \end{equation}
 \end{enumerate}
 \end{proposition}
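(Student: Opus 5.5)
The eigenvalues of $\textbf{A}_\infty(\lambda)$ are $\{\pm\sigma_+,\pm\sigma_-\}$ with $\sigma_\pm=\sqrt{a\pm ib}$ as in \eqref{eq:sigmapmdef}. I will work with the squares $\sigma_\pm^2=a\pm ib$, which are the two roots in $s=\sigma^2$ of the quadratic $\det(\lambda-\textbf{N}_0-s\textbf{B})=0$ from \eqref{det0}. By \eqref{eq:kappas}, $\operatorname{Re}\sigma_\pm>0$, so $\sigma_+\neq-\sigma_-$ and $\sigma_\pm\neq-\sigma_\pm$. Hence the four eigenvalues fail to be distinct exactly when $\sigma_+=\sigma_-$. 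Since the principal square root is injective away from its branch cut, this happens exactly when $\sigma_+^2=\sigma_-^2$, that is, when $b=0$. So the trichotomy amounts to describing when $b=0$: otherwise case (1) holds.

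Next I analyze $b=0$. By \eqref{eq:sigmapmdef}, $b=0$ means $\frac{D}{2}(\lambda-\delta)+\alpha\beta=0$. I split on $D$.

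If $D=0$, then $\beta\neq0$, so by Hypothesis~\ref{hyp:BDL} $\beta>0$, and the condition forces $\alpha\beta=0$, hence $\alpha=0$. With $(D,\alpha)=(0,0)$ we get $\det\textbf{B}=\beta^2$ and $a=(\lambda-\delta)/\beta$, which gives \eqref{sigDzero}. For the claim about real $\lambda$: the essential spectrum \eqref{ess2} becomes $\{\delta-\mu^2\beta\}=(-\infty,\delta]$. So a real $\lambda\notin\sigma_{\rm ess}$ must satisfy $\lambda>\delta$.

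If $D\neq0$, then $b=0$ forces $\lambda=\delta-2\alpha\beta/D$, which is real. I then compute $a$ at this $\lambda$, using $\det\textbf{B}=\beta^2+D^2/4$:
\[
a=\frac{-2\alpha\beta^2/D-D\alpha/2}{\beta^2+D^2/4}=-\frac{2\alpha}{D}.
\]
When $\beta>0$ this equals $(\lambda-\delta)/\beta$, giving \eqref{sigDnonzero}. When $\beta=0$ we get $\lambda=\delta$ and the expression $(\lambda-\delta)/\beta$ should be read as $-2\alpha/D$; I will note this caveat.

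It remains to show $\alpha/D<0$ in this case. If $\alpha/D\geq0$, then $a=-2\alpha/D\le0$ and $b=0$, so $a+ib$ lies on the closed negative real axis. I expect to argue that this places $\lambda$ in $\sigma_{\rm ess}$: take $\mu^2=-a$ in \eqref{ess2}, or appeal to Remark~\ref{rem:sigmaCA}. That contradicts Hypothesis~\ref{hyp:BDL}. The main delicacy is the boundary case $a=0$, that is, $\alpha=0$ with $\lambda=\delta$. Here I check directly that $\lambda=\delta\pm i\alpha=\delta$ is the edge of the essential spectrum ($\mu=0$), which is excluded.

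Finally, I confirm that the three cases are mutually exclusive: case (1) is $b\neq0$, case (2) is $b=0$ with $D=0$, and case (3) is $b=0$ with $D\neq0$.
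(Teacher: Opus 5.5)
Your proof is correct and follows essentially the same route as the paper: you split into $b\neq0$, $b=0$ with $D=0$, and $b=0$ with $D\neq0$, and in the latter two cases you exclude real $\lambda\le\delta$ (resp.\ $\alpha/D\ge0$) by showing that $\lambda$ would then lie in $\sigma_{\rm ess}(\mathcal L_\infty)$. The differences are cosmetic. You verify membership in the essential spectrum directly from \eqref{ess2}; your choice $\mu^2=-a=2\alpha/D$ does return $\lambda=\delta-2\alpha\beta/D$, whereas the paper argues via a purely imaginary eigenvalue and \eqref{sigmas}. You use $\Re\sigma_\pm>0$ in place of the paper's preliminary remark that $0$ is not an eigenvalue. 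Your caveat that $(\lambda-\delta)/\beta$ is undefined when $\beta=0$ is a fair point that the paper glosses over.
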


\begin{proof}
First we note that $0$ is not an eigenvalue of $\textbf{A}_{\infty}(\lambda)$, since
otherwise by \eqref{eq:sigmapmdef2}, $\lambda = \delta \pm i \alpha \in \sigma_{\rm ess}(\mathcal L)$, which contradicts Hypothesis~\ref{hyp:BDL}. Therefore $(a,b)\neq(0,0)$.
The three cases of the trichotomy are 
\begin{enumerate}
\item[(1)] $b\neq 0$: In this case there are clearly four distinct eigenvalues.
\item[(2)] $b=0$ and $D=0$:  In this case $\alpha=0$  by \eqref{eq:sigmapmdef} and Hypothesis~\ref{hyp:BDL}.
Therefore, by \eqref{eq:sigmapmdef},
$    \sigma^2 = \frac{\lambda - \delta}{\beta}$.
If $\lambda \in \mathbb{R}$ and $\lambda \leq \delta,$ then $\sigma^2 \leq 0,$ so $\sigma$ is pure imaginary, which implies that $\lambda \in \sigma_{\text{ess}}(\mathcal{L}_{\infty}),$ contrary to our hypothesis. Therefore, if $\lambda \in \mathbb{R},$ then $\lambda > \delta$ must hold.
\item[(3)] $b=0$ and $D\neq0$: In this case  \eqref{eq:sigmapmdef} implies that
$\lambda - \delta = \frac{-2 \alpha \beta}{D}$ and that 
$\sigma^2 =  a = \frac{-2\alpha}{D}$. 
If $\frac{\alpha}{D} \geq 0$ then $ \sigma^2 \leq 0$ and so $\sigma$ is pure imaginary,
which implies that $\lambda \in \sigma_{\text{ess}}(\mathcal{L}_{\infty})$, contrary to our hypothesis. Therefore, $\frac{\alpha}{D} < 0$ must hold.
\end{enumerate}
\end{proof}


\begin{proposition}\label{prop:diagzbl} 
Suppose that Hypothesis~\ref{hyp:BDL} holds. Then $\textbf{A}_{\infty}(\lambda)= \textbf{P} \textbf{D} \textbf{P}^{-1}$ is diagonalizable. 

(i) If  $\textbf{A}_{\infty}(\lambda)$ has four distinct eigenvalues, $\{ \pm \sigma_+, 
\pm \sigma_- \}$, then
\begin{equation}\label{P}
\textbf{P} = \begin{bmatrix}
        i/\sigma_- & -i/\sigma_+ & i/ \sigma_+ & -i / \sigma_-\\
        -1/\sigma_- & -1/\sigma_+ & 1/ \sigma_+ & 1/ \sigma_-\\  
        -i & i & i & -i\\
        1 & 1 & 1 & 1
    \end{bmatrix}, 
       \textbf{D} = \begin{bmatrix}
        -\sigma_- & 0 & 0 & 0\\
        0 & -\sigma_+ & 0 & 0\\
        0 & 0 & \sigma_+ & 0\\
        0 & 0 & 0 & \sigma_-
    \end{bmatrix},
      \textbf{P}^{-1} =  \frac{1}{4}    \begin{bmatrix}
       -i  \sigma_- & -\sigma_- & i & 1\\
       i  \sigma_+ & -\sigma_+ & -i & 1\\
       -i  \sigma_+ & \sigma_+ & -i & 1\\
       i  \sigma_- & \sigma_- & i & 1\\
  
    \end{bmatrix}.
\end{equation}

(ii) If  $\textbf{A}_{\infty}(\lambda)$ has two distinct eigenvalues,
$\pm\sigma$, then one possible diagonalization is given by
\begin{equation}\label{P2}
    \textbf{P} = 
    \begin{bmatrix}
        0 & 1 & 0 & 1\\
        1 & 0 & 1 & 0\\
        0 & -\sigma & 0 & \sigma\\  
        -\sigma & 0 & \sigma & 0
  \end{bmatrix},
  \quad
 \textbf{D} = \begin{bmatrix}
-\sigma & 0 & 0 & 0\\
0 & -\sigma & 0 & 0\\
0 & 0 & \sigma & 0\\
0 & 0 & 0 & \sigma
\end{bmatrix},
\quad
    \textbf{P}^{-1} = \frac{1}{2} \begin{bmatrix}
        0 & 1 & 0 & -1/\sigma\\
        1 & 0 & -1/\sigma & 0\\
        0 & 1 & 0 & 1/\sigma\\
        1 & 0 & 1/\sigma & 0
    \end{bmatrix}.
\end{equation}
\end{proposition}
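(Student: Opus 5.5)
Since Proposition~\ref{thmSigmaNotDistinct} splits Hypothesis~\ref{hyp:BDL} into a trichotomy, I will prove diagonalizability by treating case (1) separately from cases (2) and (3), which I handle together. In each case the plan is to show directly that the columns of the proposed $\textbf{P}$ are eigenvectors of $\textbf{A}_\infty(\lambda)$ with the eigenvalues listed in $\textbf{D}$. Then I check that the stated $\textbf{P}^{-1}$ really is the inverse, by computing $\textbf{P}^{-1}\textbf{P}=\textbf{I}$. This gives invertibility of $\textbf{P}$ and the factorization $\textbf{A}_\infty(\lambda)=\textbf{P}\textbf{D}\textbf{P}^{-1}$ at once, with no separate argument for linear independence.

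The structure of $\textbf{A}_\infty$ drives the eigenvector check. Any eigenvector for an eigenvalue $s$ has the form $\textbf{u}=[\textbf{v};\, s\textbf{v}]$ with
\begin{equation*}
(\lambda-\textbf{N}_0-s^2\textbf{B})\textbf{v}=0 .
\end{equation*}
The matrices $\textbf{B}$ and $\textbf{N}_0$ both have the form $\begin{bmatrix} p & -q\\ q & p\end{bmatrix}$, so they commute and share the eigenvectors $[1,\mp i]^T$ (up to scaling). On the line through $[1,\mp i]^T$, the matrix $\lambda-\textbf{N}_0-s^2\textbf{B}$ acts as the scalar
\begin{equation*}
\lambda-\delta-s^2\beta \mp i\,(\alpha+s^2 D/2).
\end{equation*}
By \eqref{eq:sigmapmdef2}, this scalar vanishes exactly when $s^2=\sigma_\pm^2$, with the sign matching the choice of eigenvector. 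Each column of \eqref{P} should therefore be a rescaling of $[\textbf{v};\, s\textbf{v}]$ with $\textbf{v}\propto[1,\mp i]^T$ and $s=\pm\sigma_\pm$. The rescaling divides by $s$, which is legitimate because $0$ is not an eigenvalue, as shown in the proof of Proposition~\ref{thmSigmaNotDistinct}. Checking this column by column amounts to matching which sign of $i$ pairs with $\sigma_+$ and which with $\sigma_-$.

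\textbf{Main obstacle.} The step that needs care is this sign bookkeeping. From \eqref{eq:sigmapmdef}, $\sigma_\pm^2=a\pm ib$, and I must verify that the pairing is consistent with \eqref{eq:sigmapmdef2}. To do this I will compute $(\lambda-\delta-\sigma^2\beta)+ i(\alpha+\sigma^2 D/2)$ for $\sigma^2=a+ib$, using $\det\textbf{B}=\beta^2+D^2/4$, and confirm that it equals $0$. Once that is done, the product $\textbf{P}^{-1}\textbf{P}$ reduces to sums like $\frac14(1+1+1+1)$ on the diagonal and cancelling pairs off the diagonal, and the factors $\sigma_\pm\cdot(1/\sigma_\pm)$ cancel.

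For case (ii), with repeated eigenvalues, the proof of Proposition~\ref{thmSigmaNotDistinct} shows that $\sigma^2=(\lambda-\delta)/\beta$ and $b=0$. This forces $\lambda-\textbf{N}_0-\sigma^2\textbf{B}=0$. When $D=\alpha=0$ this is immediate, since both matrices are then scalar. When $D\neq 0$ it follows from $\lambda-\delta=-2\alpha\beta/D$ and $\sigma^2=-2\alpha/D$. In either case every vector $[\textbf{v};\,\pm\sigma\textbf{v}]$ is an eigenvector, so the eigenspaces for $\pm\sigma$ are two-dimensional and $\textbf{A}_\infty(\lambda)$ is diagonalizable. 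The columns of \eqref{P2} are exactly such vectors, built from $\textbf{v}=e_2$ and $\textbf{v}=e_1$. The stated inverse follows by a direct check, using $\sigma\neq0$.
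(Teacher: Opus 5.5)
Your route is the paper's. Eigenvectors of $\mathbf A_\infty(\lambda)$ have the form $[\mathbf v;\,s\mathbf v]$ with $(\lambda-\mathbf N_0-s^2\mathbf B)\mathbf v=\mathbf 0$. The pairing of $[1,\mp i]^T$ with $\sigma_\pm$ comes from \eqref{eq:sigmapmdef2}. In the repeated case one has $\lambda-\mathbf N_0=\sigma^2\mathbf B$, which the paper phrases as $\mathbf A_\infty(\lambda)$ having lower-left block $\sigma^2\mathbf I$. Checking $\mathbf P^{-1}\mathbf P=\mathbf I$ explicitly, which the paper leaves implicit, is a good addition.

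However, the consistency check you single out as the main obstacle has the wrong sign and would fail as written. Since $\det\mathbf B=(\beta+iD/2)(\beta-iD/2)$, \eqref{eq:sigmapmdef} gives $a+ib=(\lambda-\delta-i\alpha)/(\beta+iD/2)$, i.e.\ $\sigma_+^2(\beta+iD/2)=\lambda-\delta-i\alpha$. So the quantity that vanishes at $\sigma^2=a+ib$ is $(\lambda-\delta-\sigma^2\beta)-i(\alpha+\sigma^2D/2)$, which is \eqref{eq:sigmapmdef2} with the upper sign. The combination you propose, with $+i$, equals $2i(\alpha+\sigma_+^2D/2)$ there, and this is nonzero in case (i). Indeed, if it vanished, then $\alpha+\sigma_+^2D/2=0$ and hence also $\lambda-\delta-\sigma_+^2\beta=0$. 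That gives $\lambda-\mathbf N_0=\sigma_+^2\mathbf B$, so $\det(\lambda-\mathbf N_0-s^2\mathbf B)=(\sigma_+^2-s^2)^2\det\mathbf B$, which forces $\sigma_-=\sigma_+$.

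This is only a sign slip, and your own earlier paragraph has it right. The $-i$ scalar goes with $[1,-i]^T$ and vanishes at $\sigma_+^2$; the $+i$ scalar goes with $[1,i]^T$ and vanishes at $\sigma_-^2=a-ib$. With that corrected, the column check confirms the stated $\mathbf P$:
the top halves of columns 2 and 3 are multiples of $[1,-i]^T$, with eigenvalues $\mp\sigma_+$;
the top halves of columns 1 and 4 are multiples of $[1,i]^T$, with eigenvalues $\mp\sigma_-$.
The rest of your argument then goes through.

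In case (ii), verifying $\lambda-\mathbf N_0-\sigma^2\mathbf B=\mathbf 0$ via $\sigma^2=-2\alpha/D$ when $D\neq0$ is the right move. The formula $\sigma^2=(\lambda-\delta)/\beta$ that you (and the paper) quote first is $0/0$ when $\beta=0$, which case (3) allows with $\lambda=\delta$.
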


\begin{remark}\label{rem:Q}
    In both cases, the spectral projection $\textbf{Q}$ onto the stable subspace of $\textbf{A}_{\infty}(\lambda)$ is of the form $\textbf{Q} = \textbf{P}\widehat{\textbf{Q}}\textbf{P}^{-1},$ where  \begin{equation}\label{qhatpform}
    \widehat{\textbf{Q}} = \begin{bmatrix}
        \textbf{I}_{2 \times 2} & \textbf{0}_{2 \times 2}\\
        \textbf{0}_{2 \times 2} & \textbf{0}_{2 \times 2}
    \end{bmatrix}.
\end{equation}
We use this representation of $\mathbf{Q}$ in the numerical computation of the 
kernel of the Birman-Schwinger operator given in Proposition~\ref{kktildej2} below.
\end{remark}

\begin{proof}

Arguing as in the proof of \eqref{eq:BhatAinfeig}, 
$\begin{bmatrix} \mathbf v & \mathbf w\end{bmatrix}^T$ is an eigenvector
of $\textbf{A}_{\infty}(\lambda)$ with eigenvalue, $\sigma$, if and only if
\begin{equation}\label{eq:Ainfblockev}
\begin{bmatrix}
\lambda - \mathbf N_0 & -\sigma \mathbf B \\ -\sigma \mathbf B & \mathbf B 
\end{bmatrix}
\begin{bmatrix}
 \mathbf v \\ \mathbf w
\end{bmatrix}
\,\,=\,\,
\begin{bmatrix}
 \mathbf 0 \\ \mathbf 0
\end{bmatrix}.
\end{equation}
The second row of \eqref{eq:Ainfblockev} implies that $\mathbf w = \sigma \mathbf v$.
Therefore, by the first row  $(\lambda - \mathbf N_0 - \sigma^2 \mathbf B)\mathbf v
=\mathbf 0$. Setting
$\mathbf v = 
\begin{bmatrix}
 v_1 & v_2
\end{bmatrix}^T$,
we find that $\lambda - \delta -\sigma^2\beta = \frac{-v_2}{v_1} (\alpha + \sigma^2 D/2)$.
So by \eqref{eq:sigmapmdef2}, if $\sigma = \sigma_\pm$ then $\frac{-v_2}{v_1} = \pm i$.
In the case of four distinct eigenvalues, the diagonalization in \eqref{P} now follows. 
If instead $\textbf{A}_{\infty}(\lambda)$ has two distinct eigenvalues, then $b = 0$ and $\textbf{A}_{\infty}(\lambda)$ is of  the form
\begin{equation}
    \textbf{A}_{\infty}(\lambda) = \begin{bmatrix}
        \textbf{0}_{2 \times 2} & \textbf{I}_{2 \times 2}\\
        \sigma^2 \textbf{I}_{2 \times 2} & \textbf{0}_{2 \times 2}
    \end{bmatrix},
\end{equation}
where, by Proposition~\ref{thmSigmaNotDistinct}, $\sigma^2 = \frac{\lambda - \delta}{\beta}$. 
The diagonalization~\eqref{P2} now follows. 
 \end{proof}

\section{The Birman-Schwinger Operator for the Perturbed Problem}\label{BSsection}

Throughout this section, we assume that  $\mathbf{A}_{\infty}(\lambda) $ 
satisfies Hypothesis \ref{hyp:BDL}. 
In particular, $\lambda \notin \sigma_{\text{ess}}(\mathcal{L})$. 
Our goal is to characterize the point spectrum of $\mathcal L$ 
that lies outside the essential spectrum
as the zero set of
the Fredholm determinant of an associated family of Birman-Schwinger integral operators, 
$\mathcal K(\lambda)$, parametrized by the spectral parameter, $\lambda$.
In this section, we review the construction of the operators, $\mathcal K(\lambda)$. 
In section~\ref{HSKernelSection} 
we show that $\mathcal K(\lambda)$ is Hilbert-Schmidt, and in 
sections~\ref{Lipschitzsection} and \ref{sec:MainTheorems} we establish conditions 
on the pulse, $\boldsymbol\psi$, which
guarantee that the operator $\mathcal K(\lambda)$ is trace class. 
To simplify notation, in this section we often let $\mathbf{A} := \mathbf{A}_{\infty}(\lambda)$.

The integral operator, $\mathcal K(\lambda)$, is defined in terms of the first-order
perturbed system of ordinary differential equations given in~\eqref{systempert} using
a general construction of Gesztesy, Latushkin, and Makarov~\cite{EJF}.
Let $\mathcal{L}_A, \mathcal{L}_{A+R} : H^1(\mathbb{R}, \mathbb{C}^4) \rightarrow L^2(\mathbb{R}, \mathbb{C}^4)$ be the $\lambda$-dependent operators associated with the unperturbed and perturbed systems (\ref{Ainfdiffeq}) and (\ref{systempert}), respectively; that is,  
\begin{align}
\label{LA}
\mathcal{L}_{A} \,\,&:=\,\, - \partial_x + \mathbf A_{\infty}(\lambda),
\\ 
\label{LAplusR}
\mathcal{L}_{A+R} \,\,&:=\,\, \mathcal{L}_{A} + \mathbf{R}(x).
\end{align}

The first step in the construction of $\mathcal K(\lambda)$ 
is to derive a formula for the Green's operator, $\mathcal{G}_A$, of the unperturbed first-order system~\eqref{Ainfdiffeq}. 
First, we observe that $\textbf{u} \in H^1(\mathbb{R},\mathbb{C}^4)$ solves the unperturbed problem \eqref{Ainfdiffeq} if and only if $\textbf{u} \in \text{Ker}(\mathcal{L}_A). $ By Hypothesis \ref{hyp:BDL} and \eqref{sigmas}, none of the eigenvalues of $\textbf{A}$ are pure imaginary. Consequently, nonzero solutions, $\textbf{u}(x) = e^{\textbf{A}x}\textbf{u}_0$, of 
\eqref{Ainfdiffeq} must grow as either $x \rightarrow \infty$, or $x \rightarrow -\infty$, or both, and so cannot be in $L^2(\mathbb{R}, \mathbb{C}^4).$ Therefore, Ker$(\mathcal{L}_A) = \{0\}.$

By the theory of exponential dichotomies \cite{Palmer},  $\mathcal{L}_A$ is a Fredholm operator with Fredholm index $0.$ Therefore, since $\text{Ker}(\mathcal{L}_A) = \{0\},$ we also have that  $ \text{Coker}(\mathcal{L}_A) = \{0\}.$ Hence, $\mathcal{L}_A$ is bijective and hence is invertible.  In fact, we have an explicit formula for the Green's operator $\mathcal{G}_A = \mathcal{L}_A^{-1}.$
To solve
\begin{equation}
    \textbf{w} = \mathcal{L}_A \textbf{u} = -\textbf{u}' + \textbf{A}\textbf{u}, \,\, \text{ for } \textbf{u} \in H^1(\mathbb R, \mathbb C^4),
\end{equation}
for $\mathbf u$ in terms of $\mathbf w$, we first observe that
\begin{equation}
    -e^{-\textbf{A}x} \textbf{w}(x) = e^{-\textbf{A}x} \textbf{u}'(x) - \textbf{A}e^{-\textbf{A}x}\textbf{u}(x) = (e^{-\textbf{A}x}\textbf{u})'.
\end{equation}
Clearly,
\begin{equation}
    e^{-\textbf{A}x}\textbf{w}(x) = e^{-\textbf{A}x}\textbf{Q}\textbf{w}(x) + e^{-\textbf{A}x}(\textbf{I} - \textbf{Q})\textbf{w}(x),
\end{equation}
where \textbf{Q} is the projection operator onto the stable subspace of \textbf{A}. Consequently, 
\begin{equation}
    \textbf{u}(x) e^{-\textbf{A}x} = \int_{-\infty}^{x} e^{-\textbf{A}y}\textbf{Q}\textbf{w}(y)dy - \int_{x}^{\infty} e^{-\textbf{A}y}(\textbf{I} - \textbf{Q})\textbf{w}(y)dy.
\end{equation}
That is,
\begin{equation}\label{Gakernel}
    \textbf{u}(x) = (\mathcal{G}_A \textbf{w})(x) := \int_{\mathbb{R}}\textbf{G}_A(x-y) \textbf{w}(y) dy,
\end{equation}
where 
\begin{equation}\label{psikernel}
    \textbf{G}_A(x) = \begin{cases}
 -e^{\textbf{A}x}\textbf{Q}, \,\,\, &x \geq 0,\\
 e^{\textbf{A}x}(\textbf{I} - \textbf{Q}),  \,\,\, &x < 0.
    \end{cases}
\end{equation}
We note that the kernel, $\textbf{G}_A(x,y) := \textbf{G}_A(x-y)$, belongs to
$L^2(\mathbb{R} \times \mathbb{R}, \mathbb{C}^{4 \times 4})$ since $\textbf{G}_A(x)$ decays
 exponentially  as $x \rightarrow \pm \infty.$ Therefore, $\mathcal{G}_A$ is a Hilbert Schmidt operator. 

Turning to the perturbed problem, by \eqref{systempert} we know that
$\lambda \in \sigma_{\text{pt}}(\mathcal{L})$ if and only if $\text{Ker}(\mathcal{L}_{A+R}) \neq \{\textbf{0}\}.$ By the theory of exponential dichotomies \cite{Palmer}, $\mathcal{L}_{A+R}$ is Fredholm of index 0, which implies that $\lambda \in \sigma_{\text{pt}}(\mathcal{L})$ if and only if $\mathcal{L}_{A+R}$ is not invertible.

To characterize the invertibility of 
$\mathcal{L}_{A + R}$, it is helpful to derive the polar decomposition of the  perturbation operator, $\textbf{R}(x)$. 
Recall from \eqref{Rpertdef} that $\mathbf R(x)$ is defined in terms of $\mathbf M(x)$  by,
\begin{equation}\label{Rdef}
    \textbf{R}(x) =  \begin{bmatrix}
    \textbf{0} & \textbf{0}\\ -\mathbf {B}^{-1}\textbf{M}(x) & \textbf{0}
    \end{bmatrix}.
\end{equation}
Omitting the dependence on $x$ for now, we recall that
the polar decomposition  of $\textbf{M}$ is of the form  
$\textbf{M} = \textbf{V}\,|\textbf{M}| $
where $|\textbf{M}| = (\textbf{M}^*\textbf{M})^{1/2}$ is positive semi-definite and $\mathbf{V}$ is unitary~\cite{golub2013matrix}.
To obtain the polar decomposition of $\mathbf R$ from that of $\mathbf M$, we first observe that
\begin{equation}
    |\textbf{R}| 
    \,\,=\,\, 
(\textbf{R}^*\textbf{R})^{1/2}
     \,\,=\,\, 
     (\det \textbf{B})^{-1/2} \begin{bmatrix}
    |\textbf{M}| & \textbf{0}\\ \textbf{0} & \textbf{0}
    \end{bmatrix},
\end{equation}
since, by \eqref{eq:CGLE2x2matrices}, $(\det(\mathbf B))^{-1/2} \mathbf B$ is unitary. Next, if we let $\mathbf U$ be the unitary matrix
\begin{equation}
    \textbf{U} 
    \,\, =\,\,
        \begin{bmatrix}
    \textbf{0} & \textbf{I}\\ -(\det\textbf{B})^{1/2}\textbf{B}^{-1}\textbf{V} & \textbf{0}
    \end{bmatrix},
\end{equation}
then we find that $\mathbf R = \mathbf U \,| \mathbf R |$.
Using this polar decomposition, we can factor $\mathbf R$ as 
\begin{equation}
\textbf{R} =\textbf{R}_{\ell}\textbf{R}_{r},
\end{equation}
where 
\begin{equation}\label{eq:redefRrl} 
    \textbf{R}_{\ell} := (\det \textbf{B})^{-1/4}\textbf{U}|\textbf{R}|^{1/2} = \begin{bmatrix}
    \textbf{0} & \textbf{0}\\ -\textbf{B}^{-1}\textbf{M}|\textbf{M}|^{-1/2} & \textbf{0}
    \end{bmatrix}
       \text{ and }
      \textbf{R}_r := (\det \textbf{B})^{1/4}|\textbf{R}|^{1/2} = \begin{bmatrix}
    |\textbf{M}|^{1/2} & \textbf{0}\\ \textbf{0} & \textbf{0}
    \end{bmatrix}.    
\end{equation}

Applying the Birman-Schwinger principle \cite{EJF}, we observe that 
\begin{equation}\label{eq:BSTrick}
\mathcal{L}_{A + R} \,\,=\,\, \mathcal{L}_A + \textbf{R}
\,\,=\,\, \mathcal{L}_A [\mathcal{I} + \mathcal{L}_A^{-1}\textbf{R}]
\,\,=\,\, \mathcal{L}_A [\mathcal{I} + \mathcal{G}_A \textbf{R}_{\ell} \textbf{R}_r].
\end{equation}
This identity motivates the following definition and theorem.

\begin{definition}\label{def:BSoperators}
   The \emph{unsymmetrized} and \emph{symmetrized} \emph{Birman-Schwinger operators} are  the operators
\begin{equation}\label{Kopdef}
\widetilde{\mathcal{K}}(\lambda) \,\,=\,\, \mathcal{G}_A(\lambda) \textbf{R}_{\ell}\textbf{R}_r
\qquad\text{and}\qquad
\mathcal{K}(\lambda) \,\,=\,\, \textbf{R}_r \mathcal{G}_A(\lambda)\textbf{R}_{\ell}.
\end{equation}
\end{definition}

\begin{theorem}\label{LptKtilde}
Suppose that $\lambda \notin \sigma_{\text{ess}}(\mathcal{L})$. Then
     $\lambda \in \sigma_{\text{pt}}(\mathcal{L})$ if and only if  $\mathcal{I} + {\mathcal{K}}(\lambda)$ is not invertible. 
\end{theorem}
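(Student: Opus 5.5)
The plan is to chain equivalences. First, $\lambda\in\sigma_{\rm pt}(\mathcal L)$ if and only if $\mathcal L_{A+R}$ is not invertible. Second, by the factorization \eqref{eq:BSTrick}, this holds if and only if $\mathcal I+\widetilde{\mathcal K}(\lambda)$ is not invertible on $L^2(\mathbb R,\mathbb C^4)$. Third, we pass from $\widetilde{\mathcal K}=\mathcal G_A\mathbf R_\ell\mathbf R_r$ to $\mathcal K=\mathbf R_r\mathcal G_A\mathbf R_\ell$ by the standard fact that $\mathcal I+ST$ is invertible if and only if $\mathcal I+TS$ is.

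The first equivalence was recorded just before the statement: $\mathcal L_{A+R}$ is Fredholm of index zero, and its kernel is nontrivial exactly when $\lambda\in\sigma_{\rm pt}(\mathcal L)$. So noninvertibility of $\mathcal L_{A+R}$ is equivalent to $\operatorname{Ker}\mathcal L_{A+R}\neq\{0\}$.

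For the second step, $\mathcal L_A:H^1\to L^2$ is a bijection with bounded inverse $\mathcal G_A$, as shown above. The identity $\mathcal L_{A+R}=\mathcal L_A(\mathcal I+\mathcal G_A\mathbf R)$ therefore gives that $\mathcal L_{A+R}:H^1\to L^2$ is invertible if and only if $\mathcal I+\mathcal G_A\mathbf R$ is invertible as a map $H^1\to H^1$. I will compare this with invertibility on $L^2$ at the level of kernels, which is cleaner. If $\mathbf u\in L^2$ satisfies $(\mathcal I+\mathcal G_A\mathbf R)\mathbf u=0$, then $\mathbf u=-\mathcal G_A(\mathbf R\mathbf u)\in H^1$, because $\mathbf R$ is bounded (here $\boldsymbol\psi$ is bounded) and $\mathcal G_A$ maps $L^2$ into $H^1$. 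Hence the $L^2$-kernel and the $H^1$-kernel of $\mathcal I+\widetilde{\mathcal K}$ coincide, and they equal $\operatorname{Ker}\mathcal L_{A+R}$.

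To convert kernel statements into invertibility statements, I use that $\widetilde{\mathcal K}$ is compact on $L^2$. Since $\mathbf R(x)\to0$ as $|x|\to\pm\infty$ and $\mathcal G_A$ is Hilbert–Schmidt with exponentially decaying kernel, $\mathcal G_A\mathbf R$ is an integral operator with kernel $\mathbf G_A(x-y)\mathbf R(y)$. This kernel lies in $L^2(\mathbb R^2)$ because $\mathbf R$ is bounded and $\mathbf G_A\in L^2$. So $\mathcal I+\widetilde{\mathcal K}$ is Fredholm of index zero, and by the Fredholm alternative it is invertible exactly when its kernel is trivial.

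For the third step, set $S=\mathcal G_A\mathbf R_\ell$ and $T=\mathbf R_r$, both bounded on $L^2$. The explicit identity
\[
(\mathcal I+TS)^{-1}=\mathcal I-T(\mathcal I+ST)^{-1}S
\]
and its counterpart with $S$ and $T$ swapped show that $\mathcal I+\widetilde{\mathcal K}=\mathcal I+ST$ is invertible if and only if $\mathcal I+\mathcal K=\mathcal I+TS$ is. Concretely, if $\mathbf u\in\operatorname{Ker}(\mathcal I+ST)$ is nonzero, then $T\mathbf u\in\operatorname{Ker}(\mathcal I+TS)$, and $T\mathbf u\neq0$ because $\mathbf u=-S(T\mathbf u)$.

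The only genuine obstacle is bookkeeping between the $H^1$ and $L^2$ settings in the second step, together with justifying compactness or the Fredholm property there. I handle it with the kernel-regularity argument above. The boundedness of $\mathbf R_\ell$ and $\mathbf R_r$ needs care where $|\mathbf M|$ is singular, since $\mathbf M|\mathbf M|^{-1/2}$ must be interpreted as $\mathbf V|\mathbf M|^{1/2}$. With that reading, both factors are bounded by $\|\mathbf M\|_\infty^{1/2}$ up to constants.
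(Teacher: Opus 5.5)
Your proof follows the same route as the paper's. First, $\lambda\in\sigma_{\rm pt}(\mathcal L)$ exactly when $\mathcal L_{A+R}$ is not invertible. Second, the factorization $\mathcal L_{A+R}=\mathcal L_A(\mathcal I+\mathcal G_A\mathbf R)$ holds with $\mathcal L_A$ invertible. Third, the identity $(\mathcal I+TS)^{-1}=\mathcal I-T(\mathcal I+ST)^{-1}S$ passes from $\widetilde{\mathcal K}$ to $\mathcal K$. Your extra bookkeeping between $H^1$ and $L^2$ is worthwhile, because the paper passes over both it and the reading of $\mathbf M|\mathbf M|^{-1/2}$ as $\mathbf V|\mathbf M|^{1/2}$ where $\mathbf M$ is singular.

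One justification in the compactness step is wrong, however. The kernel $\mathbf G_A(x-y)\mathbf R(y)$ is not in $L^2(\mathbb R\times\mathbb R)$ just because $\mathbf R$ is bounded. Integrating first in $x$ gives $\int\!\!\int\|\mathbf G_A(x-y)\mathbf R(y)\|^2\,dx\,dy\le\|\mathbf G_A\|_{L^2(\mathbb R)}^2\|\mathbf R\|_{L^2(\mathbb R)}^2$, so you need $\mathbf R\in L^2(\mathbb R)$; for constant $\mathbf R$ the integral diverges. Likewise, $\mathcal G_A$ itself is not Hilbert--Schmidt, despite the remark preceding the theorem. A nonzero convolution kernel $\mathbf G_A(x-y)$ is never square integrable on $\mathbb R\times\mathbb R$, so $\mathcal G_A$ cannot be the source of compactness.

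The repair is easy. $\mathbf R\in L^2$ whenever $\boldsymbol\psi$ decays exponentially, which the paper's setup presupposes: it is the assumption under which $\mathcal L$ is closed, and it becomes Hypothesis~\ref{psiexp} in the next section.

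Alternatively, you can drop compactness altogether. Because $\mathcal G_A\mathbf R$ maps $L^2$ into $H^1$, $\mathcal I+\mathcal G_A\mathbf R$ is invertible on $L^2$ if and only if it is invertible on $H^1$:
\begin{itemize}
\item An $L^2$ solution of $\mathbf u+\mathcal G_A\mathbf R\mathbf u=\mathbf g$ with $\mathbf g\in H^1$ automatically lies in $H^1$.
\item Conversely, for $\mathbf f\in L^2$, put $\mathbf u=\mathbf f-\mathbf w$, where $\mathbf w\in H^1$ solves $(\mathcal I+\mathcal G_A\mathbf R)\mathbf w=\mathcal G_A\mathbf R\mathbf f$.
\end{itemize}
Invertibility on $H^1$ is in turn equivalent to invertibility of $\mathcal L_{A+R}$, whose Fredholm index zero you already invoked. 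That closes the chain without needing compactness of $\widetilde{\mathcal K}$.
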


\begin{proof}
First, recall that $\lambda \in \sigma_{\text{pt}}(\mathcal{L})$ if and only $\mathcal{L}_{A+R}$ 
is not invertible.
   Now by \eqref{eq:BSTrick}, we see that $\mathcal{L}_{A+R}$ is invertible precisely when both $\mathcal{L}_A$ and $\mathcal{I} + \mathcal{G}_A \textbf{R}_{\ell} \textbf{R}_r$ are invertible. 
    When $\lambda \notin \sigma_{\text{ess}}(\mathcal{L}),$ $\mathcal{L}_A$ is  invertible. So $\mathcal{L}_{A+R}$ is not invertible if and only if $\mathcal{I} + \mathcal{G}_A \textbf{R}_{\ell}\textbf{R}_r$ is not invertible. Finally, we observe that $\mathcal{K}(\lambda)$ and $\widetilde{\mathcal{K}}(\lambda)$ are simultaneously (non)-invertible,  since $(I+BA)^{-1} = I - B(I+AB)^{-1}A$. 
\end{proof}

\section{The Hilbert-Schmidt Kernel}\label{HSKernelSection}

In this section, we provide a condition on the stationary pulse solution, $\psi$,
of the CGLE~\eqref{cqcgle}
 which guarantees that the Birman-Schwinger operator, $\mathcal{K}(\lambda)$,
 in Definition~\ref{Kopdef} is Hilbert-Schmidt. In this situation,  $\lambda \in \sigma_{\rm pt}(\mathcal L)$ if and only if $\lambda$ is a zero of the  $2$-modified Fredholm determinant of 
 $\mathcal{K}(\lambda)$. 
We also derive an estimate which shows how the Hilbert-Schmidt norm of
 $\mathcal{K}(\lambda)$ depends on the spectral parameter, $\lambda$.  
 This estimate enables us to obtain a $\lambda$-dependent bound on the $2$-modified Fredholm determinant of $\mathcal{K}(\lambda)$, which we will use to 
 interpret the results of numerical computations in section~\ref{sec:NumResults}.

\begin{hypothesis}\label{psiexp}
    Let $\psi = \psi(x)$ be a stationary solution of the CGLE~\eqref{cqcgle}. Assume that $\exists \,\, {C}, {a} > 0$ such that 
    \begin{equation}
        |\psi(x)| \leq {C} e^{-{a}|x|}, \,\, \forall x \in \mathbb{R}.
    \end{equation}
\end{hypothesis}

\begin{proposition}\label{kktildej2}
Suppose that Hypothesis~\ref{psiexp} holds. 
Then the Birman-Schwinger operator, $\mathcal{K}(\lambda)$, is Hilbert Schmidt,  
$\mathcal{K}(\lambda)\in \mathcal{B}_2(L^2(\mathbb{R}, \mathbb{C}^4))$. Furthermore, the matrix-valued kernel of  $\mathcal{K}(\lambda)$ is given by 
\begin{equation}\label{opkernel}
    \textbf{K}(x,y;\lambda) = \begin{cases}
    -\textbf{R}_{r}(x)\textbf{Q}(\lambda)e^{\textbf{A}_{\infty}(\lambda)(x-y)}\textbf{Q}(\lambda)\textbf{R}_{\ell}(y), & x \geq y,\\
    \textbf{R}_r(x)(\textbf{I}-\textbf{Q}(\lambda))e^{\textbf{A}_{\infty}(\lambda)(x-y)}(\textbf{I} - \textbf{Q}(\lambda))\textbf{R}_{\ell}(y), & x < y,
    \end{cases}
\end{equation}
where $\textbf{A}_{\infty}(\lambda)$ is the matrix for the unperturbed system  given in \eqref{A},
$\textbf{Q}(\lambda)$
is the spectral projection onto the stable subspace of $\textbf{A}_{\infty}(\lambda)$ given in Remark~\ref{rem:Q}, and $\mathbf R_l$ and 
$\mathbf R_r$ are given by \eqref{eq:redefRrl}.
\end{proposition}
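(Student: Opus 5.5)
The plan has two parts: derive the kernel formula \eqref{opkernel} by composing the three factors in Definition~\ref{def:BSoperators}, and then prove the kernel is square integrable on $\mathbb R\times\mathbb R$. For the formula, recall that $\mathcal G_A$ has kernel $\mathbf G_A(x-y)$ given by \eqref{psikernel}. Since $\mathbf R_r$ and $\mathbf R_\ell$ act as multiplication operators, the composition $\mathbf R_r\mathcal G_A\mathbf R_\ell$ has kernel $\mathbf R_r(x)\mathbf G_A(x-y)\mathbf R_\ell(y)$. To match \eqref{opkernel} exactly, I will use two facts. First, $\mathbf Q$ commutes with $e^{\mathbf A x}$, because $\mathbf Q$ is a spectral projection of $\mathbf A$. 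Second, $\mathbf Q^2=\mathbf Q$. Together these give $e^{\mathbf A x}\mathbf Q=\mathbf Q e^{\mathbf A x}\mathbf Q$, and the same identity holds with $\mathbf I-\mathbf Q$ in place of $\mathbf Q$. Both can be read off from $\mathbf Q=\mathbf P\widehat{\mathbf Q}\mathbf P^{-1}$ and $e^{\mathbf A x}=\mathbf P e^{\mathbf D x}\mathbf P^{-1}$ in Proposition~\ref{prop:diagzbl}.

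For the Hilbert--Schmidt property, I will show $\mathbf K\in L^2(\mathbb R\times\mathbb R,\mathbb C^{4\times4})$, since an integral operator with square-integrable kernel is Hilbert--Schmidt. The proof needs a pointwise bound on each of the three factors.

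\textbf{Bound on $\mathbf G_A$.} By the diagonalization, $\|e^{\mathbf A x}\mathbf Q\|\le \|\mathbf P\|\,\|\mathbf P^{-1}\|\,e^{-\kappa x}$ for $x\ge0$, where $\kappa=\min(\kappa_+,\kappa_-)>0$ by \eqref{eq:kappas}. The analogous bound holds for $e^{\mathbf A x}(\mathbf I-\mathbf Q)$ when $x<0$. Hence $\|\mathbf G_A(x-y)\|\le C_\lambda e^{-\kappa|x-y|}$.

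\textbf{Bound on $\mathbf R_r$ and $\mathbf R_\ell$.} From \eqref{MTilde}, $\mathbf M=\widetilde{\mathbf M}-\mathbf N_0$ satisfies $\|\mathbf M(x)\|\le c(|\psi|^2+|\psi|^4)\le C' e^{-2a|x|}$, using Hypothesis~\ref{psiexp} and the boundedness of $\psi$. Because $|\mathbf M|$ and $\mathbf M$ have the same operator norm, $\||\mathbf M|^{1/2}\|=\|\mathbf M\|^{1/2}$. For $\mathbf R_\ell$, the polar decomposition gives $\mathbf M|\mathbf M|^{-1/2}=\mathbf V|\mathbf M|^{1/2}$ with $\mathbf V$ unitary. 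Therefore $\|\mathbf R_\ell(y)\|\le\|\mathbf B^{-1}\|\,\|\mathbf M(y)\|^{1/2}\le C'' e^{-a|y|}$. The same argument gives $\|\mathbf R_r(x)\|\le C''e^{-a|x|}$.

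\textbf{Step I expect to be the main obstacle.} The expression $|\mathbf M|^{-1/2}$ is not literally defined where $\mathbf M$ is singular. I will interpret $\mathbf M|\mathbf M|^{-1/2}$ as $\mathbf V|\mathbf M|^{1/2}$, which is how it arises from \eqref{eq:redefRrl}. With that interpretation the bound on $\mathbf R_\ell$ is uniform. I also need measurability of the factors in $x$; since $\mathbf V$ may be chosen measurably, and the product $\mathbf V|\mathbf M|^{1/2}$ is insensitive to which unitary $\mathbf V$ is chosen on the kernel of $\mathbf M$, this should not affect the estimate.

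\textbf{Conclusion.} Combining the three bounds,
\begin{equation*}
\|\mathbf K(x,y;\lambda)\|\le C\,e^{-a|x|}\,e^{-\kappa|x-y|}\,e^{-a|y|}\le C\,e^{-a(|x|+|y|)}.
\end{equation*}
The right-hand side is square integrable on $\mathbb R^2$, so $\mathbf K\in L^2(\mathbb R\times\mathbb R,\mathbb C^{4\times4})$ and $\mathcal K(\lambda)\in\mathcal B_2(L^2(\mathbb R,\mathbb C^4))$.
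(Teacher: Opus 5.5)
Your proposal is correct. The derivation of \eqref{opkernel} is the same as the paper's: compose the multiplication operators with the Green's kernel \eqref{psikernel}, then use that $\mathbf Q$ is an idempotent commuting with $e^{\mathbf A_\infty(\lambda)(x-y)}$.

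For the Hilbert--Schmidt property you take a different route. The paper does no kernel estimate at this point. It invokes \cite[Lemma 2.9]{EJF}, which says a Birman--Schwinger operator of the form \eqref{Kopdef} is Hilbert--Schmidt whenever $\|\mathbf R\|_{\mathbb C^{4\times4}}\in L^1(\mathbb R)\cap L^2(\mathbb R)$. It then only observes that $\|\mathbf R(x)\|\le\|\mathbf B^{-1}\|\,\|\mathbf M(x)\|$ is bounded and exponentially decaying. You instead prove $\mathbf K\in L^2(\mathbb R^2)$ directly from pointwise bounds on the three factors. This is essentially the estimate the paper establishes only later, in Lemma~\ref{lemma:normKest}, and integrates in the proof of Theorem~\ref{Knormbd}.

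The citation is shorter and needs only integrability of $\|\mathbf R\|$, with no diagonalization. Your argument is self-contained and also gives an explicit $\lambda$-dependent bound on $\|\mathcal K(\lambda)\|_{\mathcal B_2}$ in terms of $\operatorname{cond}(\mathbf P(\lambda))$. You do not actually need the factor $e^{-\kappa|x-y|}$ or any exponential rate. Since $\mathbf G_A$ is bounded and $\|\mathbf R_r\|^2,\|\mathbf R_\ell\|^2\le C\|\mathbf R\|$, you get $\|\mathbf K(x,y)\|^2\le C'\|\mathbf R(x)\|\,\|\mathbf R(y)\|$. This is integrable on $\mathbb R^2$ as soon as $\|\mathbf R\|\in L^1(\mathbb R)$, so the direct argument loses no generality.

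Your reading of $\mathbf M|\mathbf M|^{-1/2}$ as $\mathbf V|\mathbf M|^{1/2}$ is also well placed. It agrees with the paper's actual definition $\mathbf R_\ell=(\det\mathbf B)^{-1/4}\mathbf U|\mathbf R|^{1/2}$. It matters because Hypothesis~\ref{psiexp} alone does not rule out points where $\mathbf M(x)$ is singular, for instance zeros of $\psi$. Invertibility of $\mathbf M$ enters the paper only later, through Hypothesis~\ref{hyp:lowroot}.
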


\begin{remark} 
By Remark~\ref{rem:sigmaCA}, Proposition~\ref{prop:diagzbl} and Remark~\ref{rem:Q},
the kernel, $\mathbf K(x,y;\lambda)$, and hence also $\det_p(\mathcal I + \mathcal K(\lambda))$, is defined and complex analytic
for all $\lambda \in \mathbb C\setminus \sigma_{\rm ess}(\mathcal L)$.
\end{remark}

\begin{proof}
Since $\mathcal{K}(\lambda) = \textbf{R}_r \mathcal{G}_A \textbf{R}_{\ell},$ where $\mathcal{G}_A$ is the integral operator  in (\ref{Gakernel}), the kernel of $\mathcal{K}(\lambda)$ is given by 
\begin{equation}
    \textbf{K}(x,y;\lambda) = \begin{cases}
    -\textbf{R}_{r}(x)e^{\textbf{A}_\infty(\lambda)(x-y)}\textbf{Q}(\lambda)\textbf{R}_{\ell}(y), & x \geq y,\\
    \textbf{R}_r(x)e^{\textbf{A}_\infty(\lambda)(x-y)}(\textbf{I} - \textbf{Q}(\lambda))\textbf{R}_{\ell}(y), & x < y.
    \end{cases}
\end{equation}
Equation~\eqref{opkernel} now follows since the projection operator $\textbf{Q}(\lambda)$ commutes with $e^{\textbf{A}_\infty(\lambda)(x-y)}$.

Gesztesy, Latushkin and Makarov~\cite[Lemma 2.9]{EJF} proved a general result that a Birman-Schwinger operator,
$\mathcal{K}(\lambda)$, of the form  \eqref{Kopdef} is Hilbert-Schmidt  
provided that $\|\textbf{R}\|_{\mathbb{C}^{4 \times 4}} \in L^1(\mathbb{R}) \cap L^2(\mathbb{R})$. 
Here, $\| \mathbf R\|_{\mathbb C^{k\times k}}$ denotes the matrix 2-norm of  a $k\times k$ matrix, $\mathbf R$.
Now by \eqref{Rpertdef}, 
   \begin{equation}
        \|\textbf{R}(x)\|_{\mathbb{C}^{4 \times 4}}
         = \|\textbf{B}^{-1}\textbf{M}(x)\|_{\mathbb{C}^{2 \times 2}} \leq \|\textbf{B}^{-1}\|_{\mathbb{C}^{2 \times 2}} \|\textbf{M}(x)\|_{\mathbb{C}^{2 \times 2}}.
    \end{equation}
By  \eqref{Rpertdef}, \eqref{MTilde} and Hypothesis~\ref{psiexp},  
$\|\textbf{M}(x)\|_{\mathbb{C}^{2 \times 2}}$ is bounded on $\mathbb R$ and 
decays exponentially as $x\to\pm \infty$. Therefore, $\|\textbf{R}\|_{\mathbb{C}^{4 \times 4}} \in L^1(\mathbb{R}) \cap L^2(\mathbb{R})$, as required. 
\end{proof}

The following result follows immediately from Theorem~\ref{LptKtilde},
 Proposition~\ref{kktildej2}, and the
fact that if $\mathcal K$ is a Hilbert-Schmidt operator, then $\mathcal I + \mathcal K$
is invertible if and only if ${\det}_2(\mathcal{I} + \mathcal{K}) \neq 0$~\cite{Simon}.

\begin{theorem}\label{KtildeKthm}
Suppose that Hypotheses~\ref{hyp:BDL} and \ref{psiexp} hold.  Then, 
for $\lambda \in \mathbb C \setminus \sigma_{\operatorname{ess}}(\mathcal L)$,
\begin{equation}\label{lambdaiff}
    \lambda \in \sigma_{\operatorname{pt}}(\mathcal{L}) \iff {\det}_2(\mathcal{I} + \mathcal{K}(\lambda)) = 0.
\end{equation}
\end{theorem}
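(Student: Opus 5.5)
The argument is a chain of three equivalences, each resting on a result already stated: Theorem~\ref{LptKtilde}, Proposition~\ref{kktildej2}, and the standard characterization of invertibility for Hilbert-Schmidt perturbations of the identity via the 2-modified determinant.

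\emph{Step 1.} Fix $\lambda \in \mathbb C\setminus\sigma_{\rm ess}(\mathcal L)$. Hypothesis~\ref{hyp:BDL} then guarantees that $\mathbf A_\infty(\lambda)$ has no purely imaginary eigenvalues, by \eqref{sigmas} and Proposition~\ref{thm1ev}. Consequently the spectral projection $\mathbf Q(\lambda)$ and the Green's operator $\mathcal G_A(\lambda)$ are well defined, and $\mathcal L_A$ is invertible, as shown in Section~\ref{BSsection}. Theorem~\ref{LptKtilde} now gives that $\lambda\in\sigma_{\rm pt}(\mathcal L)$ if and only if $\mathcal I+\mathcal K(\lambda)$ fails to be invertible.

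\emph{Step 2.} Under Hypothesis~\ref{psiexp}, Proposition~\ref{kktildej2} gives $\mathcal K(\lambda)\in\mathcal B_2(L^2(\mathbb R,\mathbb C^4))$. Its proof reduces to checking that $\|\mathbf R\|\in L^1\cap L^2$. This holds because $\mathbf M(x)$ is polynomial in $\boldsymbol\psi$ with no constant term, so $\|\mathbf M(x)\|\le C'e^{-2a|x|}$. Hence $\det_2(\mathcal I+\mathcal K(\lambda))$ is defined.

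\emph{Step 3.} For $\mathcal K\in\mathcal B_2$ we invoke the identity $\det_2(\mathcal I+\mathcal K)=\det_1\big((\mathcal I+\mathcal K)e^{-\mathcal K}\big)$, where $(\mathcal I+\mathcal K)e^{-\mathcal K}-\mathcal I$ is trace class. Since $e^{-\mathcal K}$ is invertible, $\mathcal I+\mathcal K$ is invertible if and only if $(\mathcal I+\mathcal K)e^{-\mathcal K}$ is. A trace class perturbation of the identity is invertible if and only if its $\det_1$ is nonzero, as in \cite{Simon}. Chaining this with Steps~1 and~2 yields \eqref{lambdaiff}.

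None of these steps is a genuine obstacle. The only point needing care is Step 1: Theorem~\ref{LptKtilde} relies on $\mathcal L_{A+R}$ being Fredholm of index zero, so that injectivity and invertibility coincide, and on the equivalence between $H^2$ eigenfunctions of $\mathcal L$ and $H^1$ solutions of \eqref{systempert}. Both facts were established earlier and apply off the essential spectrum.
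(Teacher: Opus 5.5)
Your proposal is correct and matches the paper's argument: the paper also derives the theorem immediately from Theorem~\ref{LptKtilde}, Proposition~\ref{kktildej2}, and the standard fact from \cite{Simon} that, for Hilbert--Schmidt $\mathcal K$, the operator $\mathcal I+\mathcal K$ is invertible if and only if ${\det}_2(\mathcal I+\mathcal K)\neq 0$. Your Step~3 just unpacks that last fact via ${\det}_2(\mathcal I+\mathcal K)={\det}_1\big((\mathcal I+\mathcal K)e^{-\mathcal K}\big)$, which is a valid justification.
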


Next we derive a bound for ${\det}_2(\mathcal{I} + \mathcal{K}(\lambda))$ in terms
of the condition number of the matrix, $\mathbf P(\lambda)$, of eigenvectors of 
$\mathbf A_\infty(\lambda)$. Recall that the condition number of an invertible matrix
$\mathbf P\in \mathbb C^{k\times k}$ 
is given by $\operatorname{cond}(\mathbf P) = \| \mathbf P\|_{\mathbb C^{k\times k}}
 \| \mathbf P^{-1}\|_{\mathbb C^{k\times k}}$.  
We first note that if $\psi$ satisfies Hypothesis~\ref{psiexp} then there are constants,
$C_R$ and $a_R$ so that for all $x\in \mathbb R$, 
\begin{equation}\label{eqRest}
  \|\textbf{R}(x)\|_{\mathbb C^{4 \times 4}}  \leq C_R e^{-a_R|x|}.
\end{equation}

\begin{theorem}\label{Knormbd}
Suppose that  $\lambda \notin \sigma_{\rm ess}(\mathcal L)$ and 
$\mathbf R(x)$ satisfies \eqref{eqRest}. Then there is a constant $\Gamma > 0$
so that
\begin{equation}
 |{\det}_2(\mathcal{I} + \mathcal{K}(\lambda))| \leq 
\exp\left(\frac{32 \Gamma C_R^2\operatorname{cond}^2(\mathbf P(\lambda))}{a_R^2}\right).
\end{equation}
\end{theorem}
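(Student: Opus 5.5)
The plan is to use the standard bound for the 2-modified Fredholm determinant of a Hilbert-Schmidt operator, $|{\det}_2(\mathcal I+\mathcal K)|\le \exp\bigl(\Gamma\|\mathcal K\|_{\mathcal B_2}^2\bigr)$, which holds with a universal constant $\Gamma>0$ (one may take $\Gamma=1/2$; see Simon~\cite{Simon}). This reduces the theorem to the estimate
\begin{equation*}
\|\mathcal K(\lambda)\|_{\mathcal B_2}^2 \,\le\, \frac{32\, C_R^2\operatorname{cond}^2(\mathbf P(\lambda))}{a_R^2}.
\end{equation*}
Since $\mathcal K(\lambda)$ is Hilbert-Schmidt by Proposition~\ref{kktildej2}, its Hilbert-Schmidt norm is the $L^2(\mathbb R^2)$ norm of the kernel \eqref{opkernel}, measured in the Frobenius norm. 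That norm is at most $2$ times the matrix 2-norm for the $2\times2$ nonzero blocks, so I will carry such constant factors along and absorb any leftovers into $\Gamma$.

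First I bound the Green's kernel $\mathbf G_A(x-y)$ pointwise using the diagonalization of Proposition~\ref{prop:diagzbl} and Remark~\ref{rem:Q}. For $x\ge y$,
\begin{equation*}
e^{\mathbf A(x-y)}\mathbf Q=\mathbf P\, e^{\mathbf D(x-y)}\widehat{\mathbf Q}\,\mathbf P^{-1},
\end{equation*}
and $e^{\mathbf D(x-y)}\widehat{\mathbf Q}$ is diagonal with entries $e^{-\sigma_\pm(x-y)}$. These entries have modulus at most $1$ because $\Re\sigma_\pm=\kappa_\pm>0$. The case $x<y$ is analogous with $\mathbf I-\widehat{\mathbf Q}$. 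Hence $\|\mathbf G_A(x-y)\|\le \operatorname{cond}(\mathbf P(\lambda))$ for all $x,y$. I deliberately discard the decay factor $e^{-\kappa|x-y|}$ and rely only on the decay of $\mathbf R$; otherwise a $1/\kappa$ factor would appear, and the stated bound does not contain one.

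Next I bound the factors $\mathbf R_r$ and $\mathbf R_\ell$. From \eqref{eq:redefRrl}, $\|\mathbf R_r(x)\|=\||\mathbf M(x)|\|^{1/2}$ and $\|\mathbf R_\ell(y)\|=\|\mathbf B^{-1}\mathbf M|\mathbf M|^{-1/2}\|$. Using the polar decomposition, $\mathbf B^{-1}\mathbf M|\mathbf M|^{-1/2}=\mathbf B^{-1}\mathbf V|\mathbf M|^{1/2}$, and $(\det\mathbf B)^{-1/2}\mathbf B$ is unitary. Combining these with $\|\mathbf R(x)\|=\|\mathbf B^{-1}\mathbf M(x)\|$ gives
\begin{equation*}
\|\mathbf R_r(x)\|\,\|\mathbf R_\ell(y)\|\le \|\mathbf R(x)\|^{1/2}\|\mathbf R(y)\|^{1/2}.
\end{equation*}
Here the scaling factors $(\det\mathbf B)^{\pm1/4}$ must cancel, and this is the step I need to check carefully. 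By \eqref{eqRest} the right-hand side is at most $C_R e^{-a_R(|x|+|y|)/2}$.

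Finally, I integrate:
\begin{equation*}
\int\!\!\int \|\mathbf K(x,y)\|^2\,dx\,dy \le C_R^2\operatorname{cond}^2(\mathbf P)\Bigl(\int_{\mathbb R} e^{-a_R|x|}dx\Bigr)^2 = \frac{4C_R^2\operatorname{cond}^2(\mathbf P)}{a_R^2}.
\end{equation*}
Including the factor $2$ from bounding $\|\mathbf P\|$ versus the block structure, and the Frobenius-versus-operator-norm factor $4$ for the $2\times 2$ blocks, yields the constant $32$ up to $\Gamma$. The main obstacle is the bookkeeping of constants in this last step, in particular the cancellation of the $\det\mathbf B$ factors and the $1/2$ exponent split between $\mathbf R_\ell$ and $\mathbf R_r$. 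Since $\Gamma$ is unspecified, any residual factors can be absorbed into it.
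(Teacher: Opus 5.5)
Your proof is correct and follows the paper's overall scheme: a pointwise kernel bound via the diagonalization and $\operatorname{cond}(\mathbf P)$, then the Hilbert--Schmidt norm, then $|{\det}_2(\mathcal I+\mathcal K)|\le\exp(\Gamma\|\mathcal K\|_{\mathcal B_2}^2)$. The integration step differs.

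The paper goes through Lemma~\ref{lemma:normKest}, which keeps the off-diagonal decay: $\|\mathbf K(x,y)\|\le\sqrt2\operatorname{cond}(\mathbf P)C_Re^{-a_R(|x|+|y|)/2}e^{-\kappa_1|x-y|}$, where the $\sqrt2$ comes from a Frobenius bound on $\widehat{\mathbf Q}e^{\mathbf D(x-y)}\widehat{\mathbf Q}$. It then uses $-\tfrac{a_R}{2}(|x|+|y|)\le-\tfrac{a_R}{2}|x-y|$ to get $16C_R^2\operatorname{cond}^2(\mathbf P)/\bigl(a_R(a_R/2+2\kappa_1)\bigr)<32C_R^2\operatorname{cond}^2(\mathbf P)/a_R^2$. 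Retaining $e^{-\kappa_1|x-y|}$ lets the same lemma serve later for the decay hypothesis of Theorem~\ref{TraceClassThmRealLine}.

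You instead use $\|e^{\mathbf D t}\widehat{\mathbf Q}\|\le 1$ in operator norm for $t=x-y\ge0$ and drop the $\kappa$-decay. The double integral then factorizes to $4C_R^2\operatorname{cond}^2(\mathbf P)/a_R^2$, which is simpler and gives a better constant. Your stated reason for dropping the decay is off, though: keeping it yields a factor $1/(a_R/2+2\kappa_1)\le 2/a_R$, not a $1/\kappa$ blow-up.

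The step you flagged does close, and with equality. Since $(\det\mathbf B)^{-1/2}\mathbf B$ is unitary,
\[
\|\mathbf R_r(x)\|=\|\mathbf M(x)\|^{1/2},\qquad \|\mathbf R_\ell(y)\|=(\det\mathbf B)^{-1/2}\|\mathbf M(y)\|^{1/2},\qquad \|\mathbf R(x)\|=(\det\mathbf B)^{-1/2}\|\mathbf M(x)\|,
\]
so $\|\mathbf R_r(x)\|\,\|\mathbf R_\ell(y)\|=\|\mathbf R(x)\|^{1/2}\|\mathbf R(y)\|^{1/2}$. Bounding the product is the right formulation. The paper's per-factor claim $\max\{\|\mathbf R_r\|^2,\|\mathbf R_\ell\|^2\}\le\|\mathbf R\|$ holds for both factors simultaneously only when $\det\mathbf B=1$, although only the product is actually used.

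Your final constant bookkeeping is muddled but unnecessary. Only the upper-left $2\times2$ block of $\mathbf K(x,y)$ is nonzero, so $\|\mathbf K(x,y)\|_F^2\le 2\|\mathbf K(x,y)\|^2$ and $\|\mathcal K\|_{\mathcal B_2}^2\le 8C_R^2\operatorname{cond}^2(\mathbf P)/a_R^2$. This gives the theorem with the same $\Gamma$ as in the determinant bound (e.g.\ $\Gamma=1/2$), with room to spare. The paper's proof does not track this Frobenius-versus-operator-norm factor at all.
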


\begin{remark}\label{rem:condP}
Let $\| \mathbf \cdot \|_F$ denote the Frobenius matrix norm.
Since $\frac 12 \| \mathbf P\|_F \leq \| \mathbf P \|_{\mathbb C^{4\times 4}} \leq \| \mathbf P\|_F $, we know that
$\frac 14 \operatorname{cond}_F(\mathbf P) \leq 
\operatorname{cond}(\mathbf P) 
\leq \operatorname{cond}_F(\mathbf P) $
where
$\operatorname{cond}(\mathbf P) := \|  \mathbf P\|_F  \|  \mathbf P^{-1}\|_F $.
 By Proposition~\ref{prop:diagzbl}, 
 \begin{equation}
 \operatorname{cond}_F^2(\mathbf P) 
 = \left( \frac {1}{|\sigma_-|^2} + \frac {1}{|\sigma_+|^2} + 2 \right)
 \left( {|\sigma_-|^2} + {|\sigma_+|^2} + 2 \right).
 \end{equation}
 We observe that as $\lambda \to \delta  \pm i \alpha$, which by \eqref{sigmas} are the points
 at the edge of the essential spectrum of $\mathcal L$, $\sigma_\pm \to 0$, and hence
$ \operatorname{cond}(\mathbf P) \to \infty$. Therefore, we expect 
${\det}_2(\mathcal{I} + \mathcal{K}(\lambda))$ to blow up as $\lambda \to \delta  \pm i \alpha$.
 This phenomenon could 
be problematic for numerical computation of 
eigenvalues of $\mathcal L$ that are near the edge of the
essential spectrum~\cite{JOSAB15p2757,shen2016spectra}.
\end{remark}

The proof of Theorem~\ref{Knormbd} relies on the following lemma, which we will
also use in section~\ref{Lipschitzsection} below.
For the rest of this section we suppress dependences on $\lambda$
and we assume that $\lambda \notin \sigma_{\rm ess}(\mathcal L)$.
First, we recall from \eqref{eq:kappas} that $\mathbf A_\infty$ has two eigenvalues
with a positive real part, $\sigma_\pm = \kappa_\pm + i \eta_\pm$, where
$\kappa_\pm > 0$. Let
\begin{equation}\label{eq:defkappa12}
\kappa_1 = \min \{ \kappa_+, \kappa_- \}
\qquad\text{and}\qquad
\kappa_2 = \max \{ \kappa_+, \kappa_- \}.
\end{equation}

\begin{lemma}\label{lemma:normKest}
Under the assumptions of Theorem~\ref{Knormbd},
\begin{equation}
\| \mathbf K(x,y) \|_{\mathbb C^{4\times 4}} \,\,\leq\,\,
\begin{cases}
\sqrt{2}\operatorname{cond}(\mathbf P) C_R e^{-a_R ( |x| + |y| )/2 }
e^{-\kappa_{1}(x-y)}
& \quad \text{for } x \geq y,
\\
\sqrt{2}\operatorname{cond}(\mathbf P) C_R e^{-a_R ( |x| + |y| )/2 }
e^{\kappa_{1}(x-y)}
& \quad \text{for } x \leq y.
\end{cases}
\end{equation}
Consequently, there $\exists C_{R}(\lambda)$,  so that for all $i,j$ and all
$(x,y)\in \mathbb R\times\mathbb R$,
\begin{equation}
|K_{ij}(x,y)| \leq C_{R}(\lambda) e^{-a_R ( |x| + |y| )},
\end{equation}
and
\begin{equation}
|K_{ij}(x,y)| \leq C_{R}(\lambda) e^{-\kappa_1  |x-y| }.
\end{equation}
\end{lemma}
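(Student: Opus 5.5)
We bound the kernel formula \eqref{opkernel} of Proposition~\ref{kktildej2} factor by factor, using the diagonalization of Proposition~\ref{prop:diagzbl} for the matrix exponential.

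\emph{Step 1: the middle factor.} For $x\ge y$, Remark~\ref{rem:Q} gives
\[
\mathbf Q e^{\mathbf A_\infty(x-y)}\mathbf Q=\mathbf P\,\widehat{\mathbf Q}\,e^{\mathbf D(x-y)}\,\widehat{\mathbf Q}\,\mathbf P^{-1}.
\]
The matrix $\widehat{\mathbf Q}e^{\mathbf D(x-y)}\widehat{\mathbf Q}$ is diagonal with entries $e^{-\sigma_\pm(x-y)}$ (or $e^{-\sigma(x-y)}$ twice in case (ii)). Its $2$-norm is therefore at most $e^{-\kappa_1(x-y)}$, since each entry has modulus $e^{-\kappa_\pm(x-y)}\le e^{-\kappa_1(x-y)}$ for $x-y\ge0$. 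Hence
\[
\|\mathbf Q e^{\mathbf A_\infty(x-y)}\mathbf Q\|\le \operatorname{cond}(\mathbf P)\,e^{-\kappa_1(x-y)}.
\]
The case $x<y$ is identical: $\mathbf I-\widehat{\mathbf Q}$ picks out the entries $e^{\sigma_\pm(x-y)}$, whose moduli are at most $e^{\kappa_1(x-y)}$.

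\emph{Step 2: the outer factors.} By \eqref{eq:redefRrl}, $\|\mathbf R_r(x)\|=\||\mathbf M(x)|\|^{1/2}=\|\mathbf M(x)\|^{1/2}$. Also $\|\mathbf R_\ell(y)\|\le\|\mathbf B^{-1}\|\,\|\mathbf M(y)\|\,\||\mathbf M(y)|^{-1/2}\|$, which is awkward when $\mathbf M$ is singular. It is cleaner to use
\[
\mathbf R_\ell=(\det\mathbf B)^{-1/4}\mathbf U|\mathbf R|^{1/2},\qquad \mathbf R_r=(\det\mathbf B)^{1/4}|\mathbf R|^{1/2},
\]
with $\mathbf U$ unitary. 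These give
\[
\|\mathbf R_\ell(y)\|\,\|\mathbf R_r(x)\|=\||\mathbf R(y)|\|^{1/2}\,\||\mathbf R(x)|\|^{1/2}=\|\mathbf R(y)\|^{1/2}\|\mathbf R(x)\|^{1/2},
\]
because the $(\det\mathbf B)^{\pm1/4}$ factors cancel. Applying \eqref{eqRest} then yields the bound $C_R e^{-a_R(|x|+|y|)/2}$.

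\emph{Step 3: the constant $\sqrt2$.} Combining Steps 1 and 2 gives the stated bound. The only bookkeeping concern is the $\sqrt2$. It absorbs a discrepancy in norms, for instance if $\mathbf R_\ell$ and $\mathbf R_r$ are estimated through $\mathbf M$ and $\mathbf B$ using the Frobenius norm of the rank-$\le2$ blocks, or if the case (ii) normalization of $\mathbf P$ is used. Since the claimed constant already contains the $\sqrt2$, any bound with a constant at most $\sqrt2$ suffices. I expect this to be the main (minor) obstacle: checking that the cancellation of the $\det\mathbf B$ factors and the norms used are consistent with the paper's conventions.

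\emph{Step 4: the corollaries.} Each entry satisfies $|K_{ij}|\le\|\mathbf K\|$, and $e^{-\kappa_1|x-y|}\le1$. Dropping this factor from the Step 3 bound gives decay $e^{-a_R(|x|+|y|)/2}$, that is, the first corollary with the rate $a_R$ relabeled as $a_R/2$, or stated up to adjusting $a_R$. Dropping instead the spatial factor $e^{-a_R(|x|+|y|)/2}\le1$ gives $|K_{ij}|\le\sqrt2\operatorname{cond}(\mathbf P)C_R e^{-\kappa_1|x-y|}$. Both follow with $C_R(\lambda):=\sqrt2\operatorname{cond}(\mathbf P(\lambda))C_R$.
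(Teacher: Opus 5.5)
Your proposal is correct and follows the paper's route: factor the kernel, bound $\|\mathbf R_r(x)\|\,\|\mathbf R_\ell(y)\|$ by $\|\mathbf R(x)\|^{1/2}\|\mathbf R(y)\|^{1/2}$, and bound the middle factor via $\mathbf Q=\mathbf P\widehat{\mathbf Q}\mathbf P^{-1}$. The paper's $\sqrt2$ comes only from measuring $\widehat{\mathbf Q}e^{\mathbf D(x-y)}\widehat{\mathbf Q}$ in the Frobenius norm, so your operator-norm estimate is slightly sharper; your explicit cancellation of the $(\det\mathbf B)^{\pm1/4}$ factors is also a cleaner justification than the paper's claim $\max\{\|\mathbf R_r\|^2,\|\mathbf R_\ell\|^2\}\le\|\mathbf R\|$, which holds only when $\det\mathbf B=1$, and you are right that the entrywise bound follows only with $a_R/2$ in place of $a_R$, which is all that is used later.
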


\begin{remark}\label{rem:normKest}
Similar bounds hold for $\partial_x \mathbf K$ and $\partial_y \mathbf K$.
\end{remark}

\begin{proof}
By \eqref{opkernel}, when $x\geq y$,
\begin{align}
\| \mathbf K(x,y) \|_{\mathbb C^{4\times 4}} 
\,\,&\leq\,\,
\| \mathbf R_r(x)\|_{\mathbb{C}^{4 \times 4}}
\| \textbf{Q} e^{\textbf{A}_{\infty} (x-y)} \textbf{Q} \|_{\mathbb{C}^{4 \times 4}}
\| \textbf{R}_{\ell}(y)\|_{\mathbb{C}^{4 \times 4}}
\nonumber
\\
\,\,&\leq\,\,
\| \mathbf R(x)\|^{1/2}_{\mathbb{C}^{4 \times 4}}
\| \textbf{R}(y)\|^{1/2}_{\mathbb{C}^{4 \times 4}}
\| \textbf{Q} e^{\textbf{A}_{\infty} (x-y)} \textbf{Q} \|_{\mathbb{C}^{4 \times 4}},
\end{align}
since
   $ \operatorname{max}\{\|\textbf{R}_r\|_{\mathbb{C}^{4 \times 4}}^2, \|\textbf{R}_\ell\|_{\mathbb{C}^{4 \times 4}}^2 \}  \leq \|\textbf{R}\|_{\mathbb{C}^{4 \times 4}}$.
   Next, since $\mathbf A_\infty = \mathbf P \mathbf D \mathbf P^{-1}$ and
$\mathbf Q = \mathbf P \widehat{\mathbf Q} \mathbf P^{-1}$, 
    \begin{align} \nonumber 
      \|\textbf{Q}e^{\textbf{A}_{\infty}(x-y)}\textbf{Q}\|_{\mathbb{C}^{4 \times 4}}
      &= \| \textbf{P} \widehat{\textbf{Q}} e^{\textbf{D}(x-y)} \widehat{\textbf{Q}} \textbf{P}^{-1}\|_{\mathbb{C}^{4 \times 4}} 
    \leq \text{cond}(\textbf{P)}\, \|\widehat{\textbf{Q}}e^{\textbf{D}(x-y)} \widehat{\textbf{Q}}\|_F\\ \nonumber 
    &= \text{cond}(\textbf{P}) \left( |e^{-\sigma_{+}(x-y)}|^2  + |e^{-\sigma_{-}(x-y)}|^2 \right)^{1/2}\\ \nonumber 
    &=
\text{cond}(\textbf{P}) \left( e^{-2\kappa_{+}(x-y)}  + e^{-2\kappa_{-}(x-y)} \right)^{1/2}\\
&\leq \sqrt{2} \,  \text{cond}(\textbf{P}) e^{-\kappa_{1}(x-y)}, 
    \end{align}
    by~\eqref{eq:defkappa12}. 
   When $x\geq y$, the result now follows from \eqref{eqRest}.
When $x\leq y$,  the result follows by a  similar argument and the estimate
\begin{equation}
    \|(\textbf{I} - \textbf{Q}) e^{\textbf{A}_{\infty}(x-y)}(\textbf{I} - \textbf{Q})\|_{\mathbb{C}^{4 \times 4}} \leq \sqrt{2} \,\operatorname{cond}(\textbf{P}) e^{\kappa_{1}(x-y)}.
    \label{eq:condIminusQ}
\end{equation}

\end{proof}

\begin{proof}[Proof of Theorem \ref{Knormbd}]
By Lemma~\ref{lemma:normKest}, and abbreviating $a_R$ to $a$,
\begin{align}
    \nonumber \|\mathcal{K}\|_{\mathcal{B}_2(L^2(\mathbb{R},\mathbb{C}^{4}))}^2 
    & = \int_{-\infty}^\infty \int_{-\infty}^\infty  \| \mathbf K(x,y) \|_{\mathbb{C}^{4 \times 4}}^2 dy dx
\\    
    &\leq 2 C_R^2 \text{cond}^2(\textbf{P}) \int_{-\infty}^\infty \int_{- \infty}^x e^{-a(|x| + |y|)} e^{-2 \kappa_{1}(x-y)} dy dx\\ \nonumber 
    &+ 
 2 C_R^2 \text{cond}^2(\textbf{P})\int_{-\infty}^\infty \int_{x}^{\infty} e^{-a (|x| + |y|)} e^{2 \kappa_{1}(x-y)} dy dx\\ \nonumber 
 &\leq 2 C_R^2 \text{cond}^2(\textbf{P}) \int_{-\infty}^\infty e^{-\frac{a}{2} |x|} \int_{- \infty}^x e^{-\frac{a}{2}(|x| + |y|) - 2 \kappa_{1}(x-y)} dy dx\\
    &+ 
 2 C_R^2 \text{cond}^2(\textbf{P})\int_{-\infty}^\infty e^{- \frac{a}{2} |x|} \int_{x}^{\infty} e^{-\frac{a}{2} (|x| + |y|) + 2 \kappa_{1}(x-y)} dy dx.
\end{align}
Now, for $y \leq x$,
$
    -\frac{a}{2}(|x|+|y|) \leq  -\frac{a}{2}(x-y),
$
and for $y \geq x$,
$
    -\frac{a}{2}(|x| + |y|
    ) \leq  \frac{a}{2}(x-y)
$.
Therefore, 
\begin{align}
   \nonumber  \|\mathcal{K}\|^2_{\mathcal{B}_2(L^2(\mathbb{R},\mathbb{C}^4))}  &\leq 2 C_R^2 \text{cond}^2(\textbf{P}) \int_{-\infty}^\infty e^{-\frac{a}{2} |x|} \int_{- \infty}^x e^{- \left(\frac{a}{2} + 2 \kappa_{1}\right)(x-y)} dy dx\\ \nonumber 
    &\quad+ 
 2 C_R^2 \text{cond}^2(\textbf{P})\int_{-\infty}^\infty e^{- \frac{a}{2} |x|} \int_{x}^{\infty} e^{\left(\frac{a}{2} + 2 \kappa_{1}\right) (x-y)} dy dx\\ \nonumber 
 &=   \frac{16C_R^2 \text{cond}^2 (\textbf{P})}{a(\frac{a}{2} + 2\kappa_{1})} 
 <
 \frac{32 C_R^2 \text{cond}^2(\textbf{P})}{a^2},
\end{align}
since $ \kappa_{1} > 0$.
The result now follows since by~\cite{Simon} there is a constant $\Gamma$ so that
$|{\det}_2(\mathcal{I} + \mathcal{K}(\lambda))| \leq \exp(\Gamma \|\mathcal{K}\|^2_{\mathcal{B}_2(L^2(\mathbb{R},\mathbb{C}^4))})$.
\end{proof}

\section{Lipschitz Continuity of the Kernel}\label{Lipschitzsection}

In order to apply Theorem \ref{TraceClassThmRealLine} to show that $\mathcal{K}$ is trace class and to apply Theorem \ref{quadphibound} to determine the rate of convergence of the numerical approximation of the Fredholm determinant, $\det_p(\mathcal{I} + \mathcal{K})$, we must show that the kernel $\textbf{K}$ is Lipschitz continuous. Throughout this section, we suppose that the stationary pulse, $\psi$, is $C^1$. In Proposition~\ref{prop:Kcts} we show that $\mathbf K(x,y)$ is continuous across the diagonal. 
Recall that the kernel is defined in terms of the matrix-valued function
$\mathbf M : \mathbb R \to \mathbb R^{2\times 2}$ given in 
 \eqref{MTilde} and \eqref{Rpertdef}.
In Theorem~\ref{Lipschitz}, we show that
if $\det(\mathbf M) \neq 0,$ then  the elements of the matrix kernel $\textbf{K}(x,y)$ are $C^1$ functions away from the diagonal. Therefore, under the additional assumption that  the kernel decays exponentially, we conclude that $\mathbf K$ is Lipschitz continuous on $\mathbb R \times \mathbb R$. Finally, in Proposition~\ref{prop:lowroot}, 
we provide conditions on the
parameters in the CGLE~\eqref{cqcgle} and on the maximum amplitude of the pulse which guarantee that $\det(\mathbf M) \neq 0$, and hence that 
$\mathbf K$ is Lipschitz continuous.

Recall that by Proposition \ref{kktildej2}, 
\begin{equation}\label{KformLip}
    \textbf{K}(x,y) = \begin{cases}
    -\textbf{R}_{r}(x)\textbf{Q}e^{\textbf{A}_{\infty}(x-y)}\textbf{Q}\textbf{R}_{\ell}(y), & x \geq y,  \\
    \textbf{R}_r(x)(\textbf{I}-\textbf{Q})e^{\textbf{A}_{\infty}(x-y)}(\textbf{I} - \textbf{Q})\textbf{R}_{\ell}(y), & x < y.
    \end{cases}
\end{equation}
We begin by observing the following remarkable fact.

\begin{proposition}\label{prop:Kcts}
$\mathbf{K}$ is continuous across the diagonal.
\end{proposition}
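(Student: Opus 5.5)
Continuity off the diagonal is clear, since on each of the regions $x>y$ and $x<y$ the kernel is a product of continuous functions (given that $\psi$ is $C^1$, so $\mathbf R_r$ and $\mathbf R_\ell$ are continuous). The whole matter is therefore the jump across $x=y$. Taking the limits of the two branches of \eqref{KformLip} as $y\to x$ from either side, the jump at $(x,x)$ is
\begin{equation*}
\mathbf K(x,x^-) - \mathbf K(x,x^+) = -\mathbf R_r(x)\bigl[\mathbf Q^2 + (\mathbf I-\mathbf Q)^2\bigr]\mathbf R_\ell(x) = -\mathbf R_r(x)\mathbf R_\ell(x),
\end{equation*}
using $\mathbf Q^2=\mathbf Q$, $(\mathbf I-\mathbf Q)^2 = \mathbf I-\mathbf Q$ and $e^{\mathbf A_\infty\cdot 0}=\mathbf I$. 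So the plan reduces to showing that $\mathbf R_r(x)\mathbf R_\ell(x)=\mathbf 0$ for every $x$. This is the "remarkable" point: the Green's kernel $\mathbf G_A$ itself has jump $-\mathbf I$ at the origin, but the perturbation is nilpotent in a way that kills this jump after symmetrization.

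Next compute the product directly from the block forms in \eqref{eq:redefRrl}. $\mathbf R_r = \begin{bmatrix} |\mathbf M|^{1/2} & \mathbf 0\\ \mathbf 0&\mathbf 0\end{bmatrix}$ is supported in the upper-left block. $\mathbf R_\ell$ has its only nonzero block in the lower-left position. The product $\mathbf R_r\mathbf R_\ell$ has $(1,1)$ block $|\mathbf M|^{1/2}\cdot\mathbf 0 + \mathbf 0\cdot(-\mathbf B^{-1}\mathbf M|\mathbf M|^{-1/2})=\mathbf 0$, and all other blocks vanish trivially. Hence $\mathbf R_r\mathbf R_\ell\equiv\mathbf 0$ and the one-sided limits agree, so $\mathbf K$ is continuous across the diagonal. (Equivalently, $\mathbf R_r \mathbf R_\ell$ is a "reversed" version of the nilpotent factorization $\mathbf R=\mathbf R_\ell\mathbf R_r$, with $\mathbf R^2=0$.)

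The only delicate step is making sense of $\mathbf R_\ell$ where $\mathbf M(x)$ is singular, since $|\mathbf M|^{-1/2}$ may not exist there. I would interpret $\mathbf M|\mathbf M|^{-1/2}$ as $\mathbf V|\mathbf M|^{1/2}$ via the polar decomposition, which is well defined and continuous. The block-structure computation is unaffected, because it uses only the position of the zero blocks and not the values of the entries. I would also check that the one-sided limits exist uniformly, which follows from the continuity of $\mathbf R_r$, $\mathbf R_\ell$ and of the matrix exponential.
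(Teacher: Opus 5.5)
Your proposal is correct and is essentially the paper's proof: the paper computes the two one-sided limits, reduces both to $-\mathbf R_r(x)\mathbf Q\mathbf R_\ell(x)$ using idempotence of $\mathbf Q$ and $\mathbf I-\mathbf Q$, and removes the discrepancy with the block identity $\mathbf R_r\mathbf R_\ell=\mathbf 0$, exactly as you do. One small quibble: your parenthetical ``equivalently'' is too strong, because $\mathbf R^2=\mathbf R_\ell(\mathbf R_r\mathbf R_\ell)\mathbf R_r=\mathbf 0$ follows from $\mathbf R_r\mathbf R_\ell=\mathbf 0$ but does not imply it; your proof does not rely on that remark, however.
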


\begin{proof}
First, we observe that
\begin{equation}  
    \lim_{x - y \to 0^+} \textbf{K}(x,y) 
    = \lim_{x - y \to 0^+} -\textbf{R}_r(x) \textbf{Q} e^{\textbf{A}_{\infty}(x-y)} \textbf{QR}_{\ell}(y)
    = -\textbf{R}_r(x) \textbf{Q} \textbf{R}_{\ell}(x),
\end{equation}
as $\textbf{Q}^2 = \textbf{Q}$.  Second,
\begin{equation} 
    \lim_{x-y \to 0^-} \textbf{K}(x,y) 
    = \textbf{R}_r(x) (\textbf{I}-\textbf{Q}) \textbf{R}_{\ell}(x)
    = -\textbf{R}_r(x) \textbf{Q} \textbf{R}_{\ell}(x),
\end{equation}
as required, since $(\textbf{I}-\textbf{Q})^2 = (\textbf{I}-\textbf{Q})$ and, by \eqref{eq:redefRrl},
\begin{equation}\label{eq:RrRlid}
    \textbf{R}_r(x)\textbf{R}_{\ell}(x) = \begin{bmatrix}
    |\textbf{M}|^{1/2} & \textbf{0}\\ \textbf{0} & \textbf{0}
    \end{bmatrix} \begin{bmatrix}
    \textbf{0} & \textbf{0}\\ -\textbf{B}^{-1}\textbf{M}|\textbf{M}|^{-1/2} & \textbf{0}
    \end{bmatrix} = \begin{bmatrix}
    \textbf{0} & \textbf{0}\\ \textbf{0} & \textbf{0}
    \end{bmatrix}.
\end{equation}
\end{proof}

\begin{remark}
The identity~\eqref{eq:RrRlid} holds for any operator, $\mathcal L$, of the form 
$\mathcal L = \mathbf B(x)\partial_x^2 
+ \mathbf M(x)$, irrespective of the particular form of   $\mathbf B(x)$ and $\mathbf M(x)$. Note however that $\textbf{K}$ is not differentiable at points on the diagonal. Furthermore, the kernel of the unsymmetrized Birman-Schwinger operator in Definition~\ref{def:BSoperators} is not continuous across the diagonal, and
hence is not Lipschitz continuous. 
\end{remark}

\begin{theorem}\label{Lipschitz} 
    Suppose that $\boldsymbol{\psi} \in C^1(\mathbb{R},\mathbb{R}^2)$,  that 
    $\boldsymbol{\psi}$ 
     and $\boldsymbol{\psi}_x$ decay exponentially, and that $\det(\mathbf M(x)) \neq 0$ for all $x\in \mathbb R$. Then the kernel $\mathbf{K}$  in \eqref{KformLip}  is globally Lipschitz-continuous on $\mathbb R\times \mathbb R$.
\end{theorem}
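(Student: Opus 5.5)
Our plan is to show that $\mathbf K$ is $C^1$ on each of the two closed half-planes $\{x\geq y\}$ and $\{x\leq y\}$, with first partial derivatives bounded uniformly on $\mathbb R\times\mathbb R$. Then we glue the two pieces using Proposition~\ref{prop:Kcts}.

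\textbf{Step 1: smoothness of $\mathbf R_r$ and $\mathbf R_\ell$.} By \eqref{eq:redefRrl}, the kernel depends on $x$ and $y$ only through $|\mathbf M(x)|^{1/2}$, $\mathbf M(y)|\mathbf M(y)|^{-1/2}$, and the factor $e^{\mathbf A_\infty(x-y)}$. Since $\boldsymbol\psi\in C^1$, \eqref{MTilde} shows that $\mathbf M\in C^1(\mathbb R,\mathbb R^{2\times 2})$. The hypothesis $\det\mathbf M(x)\neq0$ makes $\mathbf M^*\mathbf M$ positive definite for every $x$. On the open cone of positive definite matrices, the maps $\mathbf S\mapsto \mathbf S^{1/4}$ and $\mathbf S\mapsto\mathbf S^{-1/4}$ are real-analytic; one way to see this is the holomorphic functional calculus with a contour around the spectrum in the right half-plane. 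Hence $|\mathbf M|^{1/2}=(\mathbf M^*\mathbf M)^{1/4}$ and $\mathbf M|\mathbf M|^{-1/2}$ are $C^1$, so $\mathbf R_r,\mathbf R_\ell\in C^1$.

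\textbf{Step 2: boundedness of the derivatives.} This is the main obstacle. The derivative of $\mathbf S^{1/4}$ blows up as $\mathbf S\to 0$, and $\mathbf M(x)\to0$ as $|x|\to\infty$. For a compact interval, boundedness is automatic. For the tails, we would use the Daleckii--Krein (divided difference) formula for the derivative of $f(\mathbf S)=\mathbf S^{1/4}$. With $\mathbf S=\mathbf M^*\mathbf M$ having eigenvalues $s_1,s_2>0$, this gives
\[
\|\partial_x |\mathbf M|^{1/2}\| \lesssim \max_{i,j}\Big|\frac{s_i^{1/4}-s_j^{1/4}}{s_i-s_j}\Big|\,\|\partial_x(\mathbf M^*\mathbf M)\| \lesssim s_{\min}^{-3/4}\,\|\mathbf M\|\,\|\mathbf M_x\|.
\]
From \eqref{MTilde}, $\mathbf M$ has entries quadratic and higher in $\boldsymbol\psi$, and $\mathbf M_x$ contains a factor $\boldsymbol\psi\boldsymbol\psi_x$. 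So what is needed is a lower bound $s_{\min}\gtrsim\|\mathbf M\|^2$, that is, $|\det\mathbf M|\gtrsim\|\mathbf M\|^2$ uniformly in $x$. Given this, the estimate becomes $\lesssim \|\mathbf M\|^{-1/2}\|\mathbf M_x\|$. Since $\|\mathbf M\|\sim|\boldsymbol\psi|^2$ and $\|\mathbf M_x\|\lesssim|\boldsymbol\psi||\boldsymbol\psi_x|$, this is bounded by $|\boldsymbol\psi_x|$, which is bounded.

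We would try to obtain $|\det\mathbf M|\gtrsim\|\mathbf M\|^2$ from the structure of \eqref{MTilde}. To leading order $\mathbf M\approx|\boldsymbol\psi|^2(\mathbf N_1+2\mathbf N_1\hat{\boldsymbol\psi}\hat{\boldsymbol\psi}^T)$, where $\hat{\boldsymbol\psi}=\boldsymbol\psi/|\boldsymbol\psi|$. This matrix is $|\boldsymbol\psi|^2$ times a matrix ranging over a compact set (indexed by the direction $\hat{\boldsymbol\psi}$). If that compact family avoids singular matrices, the bound follows for large $|x|$, and compactness plus $\det\mathbf M\neq0$ handles the rest. If $\mathbf N_1$ is degenerate, we would try the same argument at the next order in $|\boldsymbol\psi|$. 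We suspect the hypotheses are intended to supply exactly this nondegeneracy, possibly through Proposition~\ref{prop:lowroot}. The same argument applies to $\mathbf M|\mathbf M|^{-1/2}=\mathbf V|\mathbf M|^{1/2}$.

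\textbf{Step 3: the matrix exponential and gluing.} On $\{x\geq y\}$, the factor $\mathbf Q e^{\mathbf A_\infty(x-y)}\mathbf Q$ and its derivative $\mathbf A_\infty\mathbf Q e^{\mathbf A_\infty(x-y)}\mathbf Q$ are bounded, exactly as in the proof of Lemma~\ref{lemma:normKest}. The analogous bounds hold on $\{x\leq y\}$. Combining these with Steps 1--2 and the product rule, each branch of \eqref{KformLip} is $C^1$ with $\|\nabla\mathbf K\|\leq C$ on its closed half-plane. The branches extend continuously to the diagonal, and by Proposition~\ref{prop:Kcts} the extensions agree there.

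Now take two points $p,q\in\mathbb R^2$. If they lie in the same half-plane, the mean value theorem on the segment joining them gives $\|\mathbf K(p)-\mathbf K(q)\|\le C|p-q|$. If they lie on opposite sides, the segment $[p,q]$ meets the diagonal at some point $z$, so
\[
\|\mathbf K(p)-\mathbf K(q)\|\le C(|p-z|+|z-q|)=C|p-q|.
\]
Hence $\mathbf K$ is globally Lipschitz.
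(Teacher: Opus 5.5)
\paragraph{Comparison with the paper.} Your argument has the same skeleton as the paper's proof. Both get $C^1$ regularity of $\mathbf R_r$ and $\mathbf R_\ell$ from the holomorphic functional calculus (the paper's Lemma~\ref{C1Lemma}), use agreement of the two branches on the diagonal (Proposition~\ref{prop:Kcts}), and split a segment where it crosses the diagonal.

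The genuine difference is how the gradient is bounded on all of $\mathbb R^2$. The paper gets the Lipschitz estimate on each square $[-L,L]^2$ by compactness. It then passes to $\mathbb R^2$ by citing Lemma~\ref{lemma:normKest} and Remark~\ref{rem:normKest}, that is, an exponential decay of $\partial_x\mathbf K$ and $\partial_y\mathbf K$ that is asserted but not proved. Lemma~\ref{lemma:normKest} only uses $\|\mathbf R_r\|^2,\|\mathbf R_\ell\|^2\le\|\mathbf R\|$. It says nothing about $\partial_x|\mathbf M|^{\pm1/2}$, which is exactly the quantity you flag as possibly blowing up when $\mathbf M(x)\to\mathbf 0$. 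Your Daleckii--Krein estimate plus a uniform bound $|\det\mathbf M|\ge c\|\mathbf M\|^2$ is the justification that remark needs. It also shows that only boundedness of $\boldsymbol\psi_x$ enters the Lipschitz estimate.

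\paragraph{Closing Step~2.} As written, Step~2 is conditional, and you should close it. It follows directly from \eqref{detMformula} and the hypothesis $\det\mathbf M\neq0$; Proposition~\ref{prop:lowroot} is not needed.

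\begin{itemize}
\item We have $\det\mathbf M=|\boldsymbol\psi|^4Q(|\boldsymbol\psi|^2)$ with $Q(t)=3|\mathbf a|^2+8(\mathbf a\cdot\mathbf b)t+5|\mathbf b|^2t^2$, and $\|\mathbf M\|\le|\boldsymbol\psi|^2(3|\mathbf a|+5|\mathbf b||\boldsymbol\psi|^2)$.
\item The hypothesis forces $|\boldsymbol\psi|>0$, $(\mathbf a,\mathbf b)\neq(\mathbf 0,\mathbf 0)$, and $Q\neq0$ on the connected range $(0,t_{\max}]$ of $|\boldsymbol\psi|^2$.
\item If $\mathbf a\neq\mathbf 0$: since $Q(0)=3|\mathbf a|^2>0$, the intermediate value theorem gives $\min_{[0,t_{\max}]}Q>0$. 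Hence $|\det\mathbf M|\ge c\|\mathbf M\|^2$.
\item If $\mathbf a=\mathbf 0$: then $\mathbf M=|\boldsymbol\psi|^4\mathbf N_2(\mathbf I+4\hat{\boldsymbol\psi}\hat{\boldsymbol\psi}^T)$ and $|\det\mathbf M|/\|\mathbf M\|^2=1/5$. Here $\|\mathbf M\|$ scales like $|\boldsymbol\psi|^4$, not $|\boldsymbol\psi|^2$ as you wrote. But then $\|\mathbf M_x\|\le C|\boldsymbol\psi|^3|\boldsymbol\psi_x|$, so $\|\mathbf M\|^{-1/2}\|\mathbf M_x\|$ is still $O(|\boldsymbol\psi_x|)$.
\end{itemize}

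The derivative of $\mathbf M|\mathbf M|^{-1/2}$ is $\mathbf M_x(\mathbf M^*\mathbf M)^{-1/4}+\mathbf M\,\partial_x(\mathbf M^*\mathbf M)^{-1/4}$. Using the same lower bound $s_{\min}=|\det\mathbf M|^2/\|\mathbf M\|^2\ge c^2\|\mathbf M\|^2$ on the smallest eigenvalue of $\mathbf M^*\mathbf M$, it reduces to the same quantity $\|\mathbf M\|^{-1/2}\|\mathbf M_x\|$. With this filled in, your proof is complete.
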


\begin{remark}
It is reasonable to ask why we need the pulse $\boldsymbol{\psi}$ to be as strong as $C^1$ 
in order to conclude that $\mathbf{K}$ is Lipschitz continuous and hence 
trace class.
In a nutshell, the reason is that if $f\in C^{0,\alpha}$ and $g\in C^{0,\beta}$
are H\"older continuous, then $f\circ g \in C^{0,\alpha\beta}$.
Consequently, if we were to make the weaker assumption that $\boldsymbol{\psi}\in C^{0,1}$ was only Lipschitz continuous, then $|\mathbf M| = \sqrt{\mathbf M^* \mathbf M} \in C^{0,1/2}$, since $\sqrt{\cdot} \in C^{0,1/2}$. Consequently, at best $\mathbf K\in C^{0,1/4}$, since it is defined in terms of $|\mathbf M|^{1/2}$. However, a counterexample of Bernstein~\cite{bernstein1934convergence} shows that 
to guarantee that a kernel is trace class it must be at least $C^{0,\alpha}$
where $\alpha > 1/2$.
\end{remark}

The proof of Theorem~\ref{Lipschitz} relies on the following general lemma.

\begin{lemma}\label{C1Lemma} Let $\mathbf M:\mathbb{R}\to\mathbb{C}^{k\times k}$ be $C^1$ in a vicinity of $x_0\in\mathbb{R}$ and $\det(\mathbf M(x_0))\neq0$. Then for $s=\pm\frac 12$ and $s=1$, the function $x\mapsto|\mathbf M(x)|^s$ is $C^1$ in a vicinity of $x_0$.
\end{lemma}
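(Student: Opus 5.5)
The strategy is to write $|\mathbf M(x)|^s$ as a fixed analytic function applied to the $C^1$ positive definite matrix $\mathbf H(x):=\mathbf M(x)^*\mathbf M(x)$, and then use the holomorphic functional calculus.

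\emph{Step 1 (the Gram matrix).} Since $\mathbf M$ is $C^1$ near $x_0$, so is $\mathbf H(x)=\mathbf M(x)^*\mathbf M(x)$; its derivative is $\mathbf M_x^*\mathbf M+\mathbf M^*\mathbf M_x$. Because $\det\mathbf M(x_0)\neq 0$, $\mathbf H(x_0)$ is Hermitian positive definite. Let $m_0>0$ be its smallest eigenvalue and $\|\mathbf H(x_0)\|$ its largest.

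\emph{Step 2 (uniform spectral bounds).} Eigenvalues of Hermitian matrices are $1$-Lipschitz in the operator norm (Weyl's inequality), and $\mathbf H$ is continuous. Hence there is a neighborhood $U$ of $x_0$ on which $\sigma(\mathbf H(x))\subset[m_0/2,\,2\|\mathbf H(x_0)\|+1]=:J$. In particular $\det\mathbf M(x)\neq0$ on $U$, so all the powers in the statement are defined there.

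\emph{Step 3 (contour representation).} Note that $|\mathbf M|^s=\mathbf H^{s/2}$, with $s/2\in\{\pm\tfrac14,\tfrac12\}$. Let $f(z)=z^{s/2}$ (principal branch), which is holomorphic on $\mathbb C\setminus(-\infty,0]$. Fix a contour $\Gamma$ in the right half-plane enclosing $J$. By the Riesz--Dunford calculus, which agrees with the spectral-theorem definition of powers for Hermitian positive matrices,
\begin{equation*}
\mathbf H(x)^{s/2}=\frac{1}{2\pi i}\oint_\Gamma z^{s/2}\,(z\mathbf I-\mathbf H(x))^{-1}\,dz,\qquad x\in U.
\end{equation*}
Since $\Gamma$ stays a positive distance from $J$, the resolvent $(z-\mathbf H(x))^{-1}$ is bounded uniformly for $(z,x)\in\Gamma\times U$.

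\emph{Step 4 (differentiation under the integral).} The resolvent is $C^1$ in $x$, with
\begin{equation*}
\partial_x(z-\mathbf H)^{-1}=(z-\mathbf H)^{-1}\,\mathbf H_x\,(z-\mathbf H)^{-1}.
\end{equation*}
This expression is jointly continuous in $(z,x)$ and uniformly bounded on the compact set $\Gamma\times\overline U$ (shrinking $U$ if necessary). Therefore we may differentiate under the integral sign. This gives $\mathbf H^{s/2}\in C^1(U)$ and
\begin{equation*}
\partial_x\mathbf H^{s/2}=\frac{1}{2\pi i}\oint_\Gamma z^{s/2}(z-\mathbf H)^{-1}\mathbf H_x(z-\mathbf H)^{-1}dz.
\end{equation*}

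The main obstacle is Step 2, namely keeping the spectrum uniformly away from the branch cut at $0$. This is exactly where $\det\mathbf M(x_0)\neq0$ enters: without it $\sqrt{\cdot}$ is only H\"older near $0$, which is the failure mode described in the remark preceding the lemma.

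For $s=-\tfrac12$ we could alternatively invert $|\mathbf M|^{1/2}$, since matrix inversion is smooth on invertible matrices. For $s=1$ we could use the implicit function theorem on $\mathbf X^2=\mathbf H$ restricted to Hermitian positive $\mathbf X$, because the Sylvester map $\mathbf Y\mapsto \mathbf X\mathbf Y+\mathbf Y\mathbf X$ is invertible when $\mathbf X>0$. The contour argument already handles all three cases at once.
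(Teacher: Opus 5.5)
Your proof is correct and follows essentially the same route as the paper. Both arguments keep the spectrum of $\mathbf M^*\mathbf M$ inside a compact interval of $(0,\infty)$ near $x_0$, write $|\mathbf M|^s$ as a Riesz--Dunford contour integral of $z^{s/2}$ against the resolvent over a contour in the right half-plane, and differentiate under the integral sign. The differences are cosmetic: you get the uniform spectral bounds from Weyl's inequality for Hermitian matrices rather than from upper semicontinuity of the spectrum, and you use the resolvent $(z\mathbf I-\mathbf H)^{-1}$, whereas the paper writes $(\mathbf M^*\mathbf M-z)^{-1}$ and so drops a sign.
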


\begin{proof}
First, observe that $\mathrm{Spec}(\mathbf M^*(x)\mathbf M(x)) \subset [0,\infty)$ for all $x$ since $\mathbf M^*(x)\mathbf M(x)\ge0$. Furthermore,  
 $0\notin \mathrm{Spec}(\mathbf M^*(x_0)\mathbf M(x_0))$ as 
$\det(\mathbf M(x_0))\neq0$. Therefore, since $x\mapsto \mathbf M(x)$ is continuous
in the vicinity of $x_0$,
the upper semicontinuity of the spectrum~\cite[Remark IV.3.3]{Kato} implies that
there is a compact interval $[a,b] \subset (0,\infty)$ and an $\epsilon > 0$ so that for all $x \in (x_0-\epsilon, x_0+\epsilon)$, we have
$\mathrm{Spec}(\mathbf M^*(x)\mathbf M(x)) \subset [a,b]$.

Let $\gamma$ be a smooth contour located in the right half plane that surrounds the segment $[a,b]$. Since the matrix, $\mathbf M^*(x)\mathbf M(x)$, is diagonalizable, Cauchy's integral formula implies that for all $x \in (x_0-\epsilon, x_0+\epsilon)$,
\begin{equation}
|\mathbf M(x)|=\sqrt{(\mathbf M^*(x)\mathbf M(x))}=\frac1{2\pi i}\int_\gamma\sqrt{z}\big(\mathbf M^*(x)\mathbf M(x)-z\big)^{-1}\,dz.
\end{equation}
Here we have chosen the principal branch of $\sqrt{\cdot}$ which ensures that
$z\mapsto \sqrt{z}$ is complex analytic in the right half plane.
Finally, 
since $\gamma$ does not intersect $\mathrm{Spec}(\mathbf M^*(x)\mathbf M(x))$,
and since
$x\mapsto \mathbf M(x)$ is $C^1$ for $x\in (x_0-\epsilon, x_0+\epsilon)$, we conclude that $x\mapsto\big(\mathbf M^*(x) \mathbf M(x)-z\big)^{-1}$ is $C^1$ for all $z\in\gamma$. Differentiating under the integral, we conclude that $x\mapsto|\mathbf M(x)|$ is $C^1$. Similarly, $x\mapsto|\mathbf M(x)|^{\pm 1/2}$ 
is $C^1$.
\end{proof}

\begin{proof}[Proof of Theorem~\ref{Lipschitz}]
Recall that
\begin{equation}\label{eq:defKpm}
    \textbf{K}(x,y) = \begin{cases}
        \textbf{K}_-(x,y), \,\,\, &y \leq x,\\
        \textbf{K}_+(x,y), \,\,\, &y > x,
    \end{cases}
\end{equation}
where $\textbf{K}_{\pm}(x,y)$ are given in terms of $\mathbf Q$, $\textbf{R}_r(x), \textbf{R}_{\ell}(y)$,  and
 $e^{\mathbf A_\infty(x-y)}$ as in \eqref{KformLip}.
By Lemma~\ref{C1Lemma} and \eqref{eq:redefRrl}, $\textbf{R}_{\ell}(y)$ and $\textbf{R}_r(x)$ are $C^1$ in $\mathbb  R$, and hence $\mathbf K_\pm(x,y)$ is $C^1$ on $\mathbb R^2$.
    Fix $L>0$ and
     let $K(x,y) = K_{ij}(x,y)$ denote an entry of the matrix-valued kernel $\textbf{K}(x,y)$. 
    Then, $\exists M=M(L) > 0$ so that
    \begin{equation}\label{KLip}
        |{K}(x_1,y_1) - {K}(x_2,y_2)| \,\,\leq \,\, M \,\|(x_1, y_1) - (x_2, y_2)\|_2
    \end{equation}
for all pairs of points $P=(x_1,y_1)$ and $Q=(x_2,y_2)$ in $[-L,L]^2$
that lie on the same side of the diagonal $y=x$. 
To show that ${K}$ is Lipschitz-continuous on all of $[-L, L]^2$ we just need to show that \eqref{KLip} also holds when the points $P, Q$ lie on opposite sides of the diagonal. Let $C$ be the line segment connecting $P$ and $Q$ and let $R = (x, x)$ be the point of intersection of this line segment with the diagonal. 
Since the functions $K_\pm$ are also $C^1$ on the entire compact set $[-L, L]^2, \|\nabla K_\pm\|$ is bounded on $[-L, L]^2$,
 and so  $\exists M>0$ so that
\begin{equation}
    \|\nabla {K}(x,y)\| \leq M, \,\, \forall (x,y) \in [-L,L]^2 \text{ with } y \neq x.
\end{equation}
Now, by the fundamental theorem of calculus for line integrals, 
\begin{equation}
    {K}(x_1, y_1) = {K}_-(x,x) + \int_R^P \nabla {K} \cdot d\textbf{r}, 
\end{equation}
where $y_1 < x_1$ and ${K}_-(x,x)$ is the limit of $K(x,y)$ as $(x, y) \rightarrow (x, x)$ from the left. Similarly,
\begin{equation}
    {K}(x_2, y_2) = {K}_+(x, x) + \int_{R}^Q \nabla {K} \cdot d\textbf{r},
\end{equation}
where $y_2 > x_2$ and ${K}_+(x, x)$ is the limit of $K(x,y)$ as $(x, y) \rightarrow (x, x)$ from the right. 
We note that the limits $K_-(x,x)$ and $K_+(x,x)$ exist and are equal since, 
 by Proposition~\ref{prop:Kcts}, $K$ is continuous across the diagonal.
Therefore,
\begin{align} \nonumber 
    |{K}(x_1, y_1) - {K}(x_2, y_2)| &= \left| \int_R^P \nabla {K} \cdot d\textbf{r} - \int_R^Q \nabla {K} \cdot d\textbf{r} \right|\\ \nonumber 
    &\leq \left| \int_R^P \nabla {K} \cdot d\textbf{r} \right| + \left| \int_R^Q \nabla {K} \cdot d\textbf{r} \right|\\ \nonumber 
    &\leq M\left( \|(x_1, y_1) - (x, x)\|_2 + \| (x_2, y_2) - (x, x)\|_2\right)\\
    &= M \|(x_1, y_1) - (x_2, y_2)\|_2,
\end{align}
where the final equality holds since $R$ is on the line segment from $P$ to $Q$.
This argument shows that $K$ is Lipschitz-continuous and hence is absolutely continuous 
on  $[-L, L]^2$. Therefore,
$K$ is differentiable almost everywhere on $\mathbb R \times \mathbb R$.
Since $K$ is absolutely continuous on $[-L, L]^2$ for every $L>0$, we know that
for any $(x_1,y_1)$, $(x_2,y_2)\in \mathbb R\times \mathbb R$,
\begin{equation}
K(x_2,y_2) - K(x_1,y_1) = \int_C \nabla K\cdot d\textbf{r}.
\end{equation}
 where $C$ is the line segment joining $(x_1, y_1)$ to $(x_2, y_2)$.
Therefore, for each $L>0$, $\partial_x K$ and $\partial_y K$ are bounded above by $M=M(L)$ on $[-L,L]^2$.

Finally, we observe that since $\boldsymbol{\psi}$ is assumed to decay exponentially,
by Lemma~\ref{lemma:normKest} and Remark~\ref{rem:normKest}, $K$ and its partial derivatives 
decay exponentially.
Since we already know  that $\partial_x K$ and $\partial_y K$ are bounded on
$[-L,L]^2$ for any $L>0$, we conclude that $\partial_x K$ and $\partial_y K$ exist almost everywhere and are bounded 
on $\mathbb R \times \mathbb R$. Consequently, $K$ is globally Lipschitz-continuous on $\mathbb R \times \mathbb R$.
\end{proof}

Theorem~\ref{Lipschitz} applies quite generally to kernels of the form \eqref{KformLip}, where $\mathbf R_\ell$ and $\mathbf R_r$ are defined in terms
of a matrix-valued function $\mathbf M(x)$ as in \eqref{Rpertdef}. 
In the case of the CGLE, we recall from \eqref{MTilde} and \eqref{Rpertdef} that 
\begin{equation}\label{eq:M}
    \textbf{M} =  \textbf{N}_1|\boldsymbol{\psi}|^2 + \textbf{N}_2|\boldsymbol{\psi}|^4 + \left( 2\textbf{N}_1 + 4\textbf{N}_2|\boldsymbol{\psi}|^2\right)\boldsymbol{\psi} \boldsymbol{\psi}^T,
\end{equation}
where $|\boldsymbol{\psi}| := \|\boldsymbol{\psi}(x)\|_{2}$.
By explicitly calculating $\det(\mathbf M)$ we can derive conditions
on the parameters in the CGLE and on the amplitude of the pulse, $\boldsymbol\psi$,
which guarantee that the assumption, $\det(\mathbf M)>0$, in Theorem~\ref{Lipschitz}
holds. 
First,  we  observe that
\begin{equation}\label{Mrank1G}
    \textbf{M} = \textbf{G} + \textbf{H}\boldsymbol{\psi}\boldsymbol{\psi}^T,
\end{equation}
where
\begin{equation}\label{defG}
    \textbf{G} = |\boldsymbol{\psi}|^2 \begin{bmatrix}
    \alpha & -\beta\\ \beta & \alpha
    \end{bmatrix}
\qquad\text{and}\qquad
    \textbf{H} = \begin{bmatrix}
    4\alpha - 2\epsilon & -(4\beta - 2\gamma)\\
    4\beta - 2\gamma & 4\alpha - 2\epsilon
    \end{bmatrix},
\end{equation}
with 
$\alpha = \epsilon + \mu|\boldsymbol{\psi}|^2$ and $\beta = \gamma + \nu |\boldsymbol{\psi}|^2$. 
Now,
\begin{equation}
    \det \textbf{G} = |\boldsymbol{\psi}|^4(\alpha^2 + \beta^2) = |\boldsymbol{\psi}|^4 |\textbf{a} + |\boldsymbol{\psi}|^2 \textbf{b}|^2,
\end{equation}
where 
\begin{equation}\label{eqabdef}
   \textbf{a} = \begin{bmatrix}
   \epsilon \\ \gamma
   \end{bmatrix} 
   \qquad\text{and}\qquad
    \textbf{b} = \begin{bmatrix}
   \mu \\ \nu
   \end{bmatrix}. 
\end{equation}
Since $\textbf{M}$ is a rank one update of $\textbf{G},$ by the Sherman-Morrison formula \cite{Meyer}, we have that
\begin{equation}
    \det \textbf{M} = \det \textbf{G} [1 + \boldsymbol{\psi}^T \textbf{G}^{-1} \textbf{H} \boldsymbol{\psi}].
\end{equation}
A calculation shows that   
\begin{align} \nonumber \frac{\alpha^2 + \beta^2}{2}
\boldsymbol{\psi}^T \textbf{G}^{-1}\textbf{H}\boldsymbol{\psi} &=   \alpha^2 + \beta^2 + |\boldsymbol{\psi}|^2(\alpha \mu + \beta \nu)\\ \nonumber 
&=   |\textbf{a} + |\boldsymbol{\psi}|^2 \textbf{b}|^2  + |\boldsymbol{\psi}|^2(\textbf{a} + |\boldsymbol{\psi}|^2 \textbf{b}) \cdot \textbf{b}
=    ( \textbf{a} + |\boldsymbol{\psi}|^2 \textbf{b}) \cdot (\textbf{a} + 2|\boldsymbol{\psi}|^2 \textbf{b}),
\end{align}
and so, after further calculation we find that,
\begin{equation}\label{detMformula}
    \det \textbf{M} = |\boldsymbol{\psi}|^4 \left[ 3|\textbf{a} + |\boldsymbol{\psi}|^2\textbf{b}|^2 + 2(\textbf{a} + |\boldsymbol{\psi}|^2 \textbf{b}) \cdot |\boldsymbol{\psi}|^2 \textbf{b} \right].
\end{equation}

\begin{hypothesis}\label{hyp:lowroot} Suppose that 
\begin{enumerate}
\item 
$|\boldsymbol{\psi}(x)| > 0$ for all $x \in \mathbb{R}$,
\item $\textbf{a} = [\epsilon \,\, \gamma]^T$ and $\textbf{b} = [\mu \,\, \nu]^T$ are not both \textbf{0}, and
\item if $\textbf{a} \neq \textbf{0}$ and $\textbf{b} \neq \textbf{0}$ and
\begin{equation}\label{rminusdef}
    r_- := \frac{-4(\epsilon \mu + \gamma \nu) - \sqrt{16(\epsilon \mu + \gamma \nu)^2 - 15(\epsilon^2 + \gamma^2 )(\mu^2 + \nu^2)}}{5(\mu^2 + \nu^2)}
\end{equation}
 is real and positive, then
\begin{equation}\label{hyprminusassumption}
\max_{x \in \mathbb{R}}
    |\boldsymbol{\psi}(x)|^2 < r_-.
\end{equation}
\end{enumerate}
\end{hypothesis}

\begin{proposition}\label{prop:lowroot}
If Hypothesis \ref{hyp:lowroot} holds then
$\det(\textbf{M}(x)) > 0$ for all $x$.
\end{proposition}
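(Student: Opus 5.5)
The plan is to work directly from the formula \eqref{detMformula}. Set $r = |\boldsymbol\psi(x)|^2 > 0$, which is positive by part 1 of Hypothesis~\ref{hyp:lowroot}. Since $|\boldsymbol\psi|^4>0$, it suffices to show that the quadratic
\begin{equation*}
f(r) := 3|\mathbf a + r\mathbf b|^2 + 2r(\mathbf a + r\mathbf b)\cdot\mathbf b = 5|\mathbf b|^2 r^2 + 8(\mathbf a\cdot\mathbf b)\, r + 3|\mathbf a|^2
\end{equation*}
is strictly positive for every $r$ in the range $(0, \max_x|\boldsymbol\psi(x)|^2]$. Here $|\mathbf b|^2=\mu^2+\nu^2$ and $\mathbf a\cdot\mathbf b = \epsilon\mu+\gamma\nu$. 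The roots of $f$ are exactly
\begin{equation*}
r_\pm = \frac{-4(\mathbf a\cdot\mathbf b) \pm \sqrt{16(\mathbf a\cdot\mathbf b)^2 - 15|\mathbf a|^2|\mathbf b|^2}}{5|\mathbf b|^2},
\end{equation*}
so $r_-$ in \eqref{rminusdef} is the smaller root.

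I will split into cases according to part 2 of the hypothesis.

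\emph{Case $\mathbf b=\mathbf 0$, $\mathbf a\neq\mathbf 0$.} Then $f\equiv 3|\mathbf a|^2>0$.

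\emph{Case $\mathbf a=\mathbf 0$, $\mathbf b\neq\mathbf 0$.} Then $f(r)=5|\mathbf b|^2r^2>0$ for $r>0$.

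\emph{Case $\mathbf a,\mathbf b\neq\mathbf 0$.} Here $f$ is an upward parabola with $f(0)=3|\mathbf a|^2>0$. There are three subcases.
\begin{itemize}
\item If the discriminant is negative, $f$ has no real roots and so $f>0$ everywhere.
\item If the roots are real and $r_->0$, then $f>0$ on $[0,r_-)$. Hypothesis \eqref{hyprminusassumption} places every value $r=|\boldsymbol\psi(x)|^2$ in $(0,r_-)$.
\item If the roots are real and $r_-\le 0$, I need to exclude roots in $(0,\infty)$. Since the product of the roots is $r_+r_- = 3|\mathbf a|^2/(5|\mathbf b|^2)>0$, both roots have the same sign. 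As $r_-\neq 0$, both roots are negative, so $f>0$ on $(0,\infty)$.
\end{itemize}

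The only delicate points are this sign argument and the check that expanding \eqref{detMformula} gives exactly the coefficients $5,8,3$. The latter expansion is $3|\mathbf a|^2+6r\,\mathbf a\cdot\mathbf b+3r^2|\mathbf b|^2+2r\,\mathbf a\cdot\mathbf b+2r^2|\mathbf b|^2$, which is routine. Since $r_-$ being real and positive is exactly the condition flagged in the hypothesis, I expect no real obstacle. Combining the cases gives $\det\mathbf M(x)=|\boldsymbol\psi(x)|^4 f(|\boldsymbol\psi(x)|^2)>0$ for all $x$.
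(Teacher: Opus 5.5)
Your proof is correct and follows the paper's proof essentially step for step. Both arguments use \eqref{detMformula} to reduce the claim to positivity of $Q(t)=3|\mathbf a|^2+8\,\mathbf a\cdot\mathbf b\,t+5|\mathbf b|^2t^2$ on the range of $|\boldsymbol\psi|^2$, and both split into the cases $\mathbf b=\mathbf 0$, $\mathbf a=\mathbf 0$, and both nonzero. The only cosmetic difference is how positive roots are ruled out when $r_-\le 0$: you use the positive product of the roots, whereas the paper uses $Q(0)>0$ and $Q(t)\to\infty$ to argue that positive roots come in pairs, the smaller being $r_-$.
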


\begin{remark}\label{RemSecantTraceClass} In the special case case of the hyperbolic secant solution of the NLSE, Hypothesis~\ref{hyp:lowroot} holds since
$\mathbf a = [0,\gamma]^T \neq \mathbf 0$ and
$\mathbf{b} = [\mu \,\, \nu]^T = \mathbf{0}$. 
\end{remark}

\begin{remark}
The proof below also shows that  the converse holds, provided $\boldsymbol{\psi}$
is continuous. 
\end{remark}

\begin{proof}
By (\ref{detMformula}),
 \begin{equation}\label{detMab}
    \det \textbf{M} 
  =  |\boldsymbol{\psi}|^4 \, Q(| \boldsymbol\psi|^2)
  \quad\text{where}\quad
  Q(t) = 3|\textbf{a}|^2 + 8\textbf{a} \cdot \textbf{b} \,t + 5|\textbf{b}|^2t^2.
  \end{equation}
By (\ref{detMab}) and item (1) in Hypothesis~\ref{hyp:lowroot}, it suffices to show that $Q(t) > 0$ for all 
$t \in \operatorname{Image}(|\boldsymbol\psi |^2) 
:= \{|\boldsymbol{\psi}(x)|^2\, :\, x \in \mathbb{R}\} \subset (0, \infty).$ If $\textbf{b} = \textbf{0}$, then $\textbf{a} \neq \textbf{0},$ and $Q(t) = 3|\textbf{a}|^2 > 0$ for all $t\in\mathbb R$.
If instead $\textbf{b} \neq \textbf{0}$ and $\mathbf a = \mathbf 0$, then
$Q(t) = 5|\textbf{b}|^2 t^2 > 0$ for all $t>0$.

Finally, we consider the case that
$\textbf{b} \neq \textbf{0}$ and $\mathbf a \neq \mathbf 0$.
First, we observe that since $Q(0) = 3|\textbf{a}|^2 > 0$ and $Q(t) \to \infty$ as $t \to \infty,$ either $Q$ has no positive roots, in which case $Q(t) > 0, \, \forall \, t > 0,$ or $Q$ has two positive roots. If $Q$ has two positive roots, the smaller of these, $r_{-},$ is given by (\ref{rminusdef}). Since $Q(t) > 0$ for all $t \in (0, r_-)$, we conclude from \eqref{hyprminusassumption} that
 $Q(|\boldsymbol{\psi}(x)|^2) > 0$ for all $x \in \mathbb{R}$, as required. 
\end{proof}

\section{Main Results}\label{sec:MainTheorems}

The main goal of this paper is to numerically compute the point spectrum,
$\sigma_{\rm pt}(\mathcal L)$, of the 
operator, $\mathcal L$, obtained by linearizing the CGLE~\eqref{cqcgle}
about a stationary pulse solution, $\psi$. 
The spectrum of $\mathcal L$ characterizes the linear stability of the pulse $\psi$. 
The standard approach to this
problem is to compute the zeros of the Evans function, $E(\lambda)$.
The main challenge with this approach is that the Evans function is defined in terms
of the Jost functions, whose numerical computation involves solving a stiff  system of differential equations. 
Here, we take an alternative approach in which 
 the point spectrum of $\mathcal L$
 is characterized as the zero set of the Fredholm determinant of a family of Birman-Schwinger integral operators, $\mathcal K(\lambda)$, 
parametrized by the spectral parameter, $\lambda\in\mathbb C\setminus \sigma_{\rm ess}(\mathcal L)$.

In this section, we apply the results of section~\ref{sec:Background} to the 
Birman-Schwinger operator, $\mathcal{K}(\lambda)$, given in Proposition~\ref{kktildej2}.
First, applying Theorem~\ref{TraceClassThmRealLine} and invoking Theorem~\ref{Lipschitz}, we identify conditions
under which $\mathcal{K}(\lambda)$ is trace class.  
In this situation, we can 
apply results of  Gesztesy, Latushkin, and Makarov~\cite{EJF}
to conclude that the regular Fredholm determinant, ${\det}_1(\mathcal I + \mathcal K(\lambda))$, is equal to the Evans function, $E(\lambda)$.
Second, applying Theorems~\ref{BackgroundTruncationTheorem} and 
\ref{quadphibound}, we  obtain  bounds on the errors between the Fredholm determinants, ${\det}_p
(\mathcal I + \mathcal K(\lambda))$, for $p=1,2$, and their numerical approximations,
$d_{p,Q}$, defined in \eqref{eq:dpQdef}.
In section~\ref{sec:NumResults}, we will apply the results in this section to
numerically compute $\sigma_{\rm pt}(\mathcal L)$. 

\begin{theorem}\label{TCthm}
Suppose that  $\lambda \notin \sigma_{\rm ess}(\mathcal L)$, that $\boldsymbol{\psi} \in C^1(\mathbb{R},\mathbb{C}^2)$,  that 
    $\boldsymbol{\psi}$ 
     and $\boldsymbol{\psi}_x$ decay exponentially, and that Hypotheses~\ref{hyp:BDL}  and \ref{hyp:lowroot} hold.
 Then $\mathcal{K}(\lambda) \in \mathcal{B}_1(L^2(\mathbb{R},\mathbb{C}^4))$ is trace class. Furthermore, 
 \begin{equation}\label{eq:det1eqE}
{\det}_1(\mathcal I + \mathcal K(\lambda)) = E(\lambda),
 \end{equation}
 and 
  \begin{equation}\label{eq:det2eqE}
{\det}_2(\mathcal I + \mathcal K(\lambda)) = e^{-\Tr(\mathcal{K}(\lambda))} E(\lambda).
 \end{equation}
\end{theorem}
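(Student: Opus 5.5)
The plan is to verify the hypotheses of Theorem~\ref{TraceClassThmRealLine} for the kernel $\mathbf K(x,y;\lambda)$ of Proposition~\ref{kktildej2}, with $k=4$. Hypothesis~\ref{hyp:lowroot} gives, by Proposition~\ref{prop:lowroot}, $\det\mathbf M(x)>0$ for all $x$. Together with $\boldsymbol\psi\in C^1$ and the exponential decay of $\boldsymbol\psi$ and $\boldsymbol\psi_x$, Theorem~\ref{Lipschitz} then shows that $\mathbf K$ is globally Lipschitz on $\mathbb R\times\mathbb R$. Exponential decay of $\boldsymbol\psi$ gives Hypothesis~\ref{psiexp}, so Proposition~\ref{kktildej2} yields $\mathbf K\in L^2$ and $\mathcal K(\lambda)\in\mathcal B_2$.

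It remains to check the decay estimate \eqref{KandDerivsexp}. Lemma~\ref{lemma:normKest} gives $|K_{ij}(x,y)|\le C_R(\lambda)e^{-\kappa_1|x-y|}$. For the derivatives, I would differentiate \eqref{KformLip} off the diagonal. The derivative $\partial_x\mathbf K$ equals $-\mathbf R_r'(x)\mathbf Q e^{\mathbf A_\infty(x-y)}\mathbf Q\mathbf R_\ell(y)-\mathbf R_r(x)\mathbf Q\mathbf A_\infty e^{\mathbf A_\infty(x-y)}\mathbf Q\mathbf R_\ell(y)$ for $x>y$, and there is an analogous formula for $x<y$ and for $\partial_y\mathbf K$. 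Then I would bound each factor as in the proof of Lemma~\ref{lemma:normKest}: $\mathbf Q e^{\mathbf A_\infty t}\mathbf Q$ and $\mathbf Q\mathbf A_\infty e^{\mathbf A_\infty t}\mathbf Q$ are bounded by a constant times $\operatorname{cond}(\mathbf P)(1+|\sigma_+|+|\sigma_-|)e^{-\kappa_1 t}$ via the diagonalization of Proposition~\ref{prop:diagzbl}.

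The \textbf{main obstacle} is bounding $\mathbf R_r'$ and $\mathbf R_\ell'$, since these involve derivatives of $|\mathbf M|^{\pm1/2}$. I would use the Cauchy integral representation from Lemma~\ref{C1Lemma} with $z^{\pm1/4}$ applied to $\mathbf M^*\mathbf M$, which gives
\[
\tfrac{d}{dx}|\mathbf M|^{s}=\tfrac1{2\pi i}\int_\gamma z^{s/2}(\mathbf M^*\mathbf M-z)^{-1}(\mathbf M^*\mathbf M)'(\mathbf M^*\mathbf M-z)^{-1}\,dz .
\]
The difficulty is that $\mathbf M\to\mathbf 0$ at infinity, so no uniform contour works. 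Since only boundedness is needed in \eqref{KandDerivsexp} (the decay comes from $e^{-\kappa_1|x-y|}$), I would treat the region $|x|,|y|>R$ directly. Because $\mathbf M$ is a $2\times2$ matrix, $\mathbf B^{-1}\mathbf M|\mathbf M|^{-1/2}$ and $|\mathbf M|^{1/2}$ are homogeneous of degree $1/2$ in the scale of $\mathbf M$. By \eqref{eq:M}, $\mathbf M=|\boldsymbol\psi|^2\,\mathbf N(\hat{\boldsymbol\psi},|\boldsymbol\psi|^2)$, where $\hat{\boldsymbol\psi}=\boldsymbol\psi/|\boldsymbol\psi|$ and $\mathbf N$ is smooth with $\det\mathbf N$ bounded below (by \eqref{detMab}). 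Hence $|\mathbf M|^{1/2}=|\boldsymbol\psi|\,|\mathbf N|^{1/2}$, which is $C^1$ with derivative bounded by a multiple of $|\boldsymbol\psi_x|$, provided $\hat{\boldsymbol\psi}'$ is controlled. If controlling $\hat{\boldsymbol\psi}'$ fails, I would fall back on noting that Theorem~\ref{Lipschitz}'s proof, via Remark~\ref{rem:normKest}, already asserts exponential decay of $\partial_x\mathbf K,\partial_y\mathbf K$, and cite it. Theorem~\ref{TraceClassThmRealLine} then gives $\mathcal K(\lambda)\in\mathcal B_1$.

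For \eqref{eq:det1eqE}, I would invoke the theorem of Gesztesy, Latushkin, and Makarov~\cite{EJF}. It states that for trace class Birman-Schwinger operators of this semi-separable form, $\det_1(\mathcal I+\mathcal K(\lambda))$ equals the Evans function $E(\lambda)$ built from the normalized Jost solutions. In general it gives $\det_2(\mathcal I+\mathcal K)=e^{-\Tr\mathcal K}E$ whenever $\mathcal K\in\mathcal B_2$ and $\Tr$ is suitably interpreted. Finally, \eqref{eq:det2eqE} follows from the standard identity $\det_2(\mathcal I+\mathcal K)=\det_1(\mathcal I+\mathcal K)e^{-\Tr\mathcal K}$, valid for $\mathcal K\in\mathcal B_1$ \cite{Simon}.
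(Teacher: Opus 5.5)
\textbf{There is a gap in the step $\det_1(\mathcal I+\mathcal K(\lambda))=E(\lambda)$.} Your argument that $\mathcal K(\lambda)\in\mathcal B_1$ is fine and follows the paper's route. Proposition~\ref{prop:lowroot} and Theorem~\ref{Lipschitz} give Lipschitz continuity, Lemma~\ref{lemma:normKest} and Remark~\ref{rem:normKest} give \eqref{KandDerivsexp}, and Theorem~\ref{TraceClassThmRealLine} finishes. Your homogeneity argument for $\mathbf R_r'$ and $\mathbf R_\ell'$ does close, because $|\boldsymbol\psi|\,|\hat{\boldsymbol\psi}_x|\le|\boldsymbol\psi_x|$. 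One correction: when $\mathbf a=\mathbf 0$ you must factor out $|\boldsymbol\psi|^4$ rather than $|\boldsymbol\psi|^2$, since otherwise $\det\mathbf N=5|\mathbf b|^2|\boldsymbol\psi|^4$ is not bounded below.

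The result of Gesztesy, Latushkin and Makarov does not say that $\det_1=E$ for trace class Birman--Schwinger operators. \cite[Theorem 8.3]{EJF} is a Hilbert--Schmidt statement, $\det_2(\mathcal I+\mathcal K(\lambda))=e^{\Theta(\lambda)}E(\lambda)$, with the explicit factor $\Theta(\lambda)=\int_0^\infty\Tr(\mathbf Q\mathbf R)\,dx-\int_{-\infty}^0\Tr((\mathbf I-\mathbf Q)\mathbf R)\,dx$. In that generality the kernel jumps across the diagonal and $\mathcal K$ is typically only Hilbert--Schmidt, which is why Proposition~\ref{prop:Kcts} matters here.

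Your phrase ``$\Tr$ suitably interpreted'' hides exactly what must be proved: $\Theta(\lambda)=-\Tr\mathcal K(\lambda)$, with $\Tr$ the genuine trace of the trace class operator. The paper proves this in three steps:
\begin{enumerate}
\item $\mathbf R$ is strictly block lower triangular, so $\Tr\mathbf R=0$ and hence $\Theta=\int_{\mathbb R}\Tr(\mathbf Q\mathbf R)\,dx$.
\item $\mathcal K$ is trace class with a continuous kernel, so $\Tr\mathcal K=\int_{\mathbb R}\Tr\mathbf K(x,x)\,dx$ by the trace formula in \cite{GZL2025NumericalFredholm}.
\item $\mathbf K(x,x)=-\mathbf R_r\mathbf Q\mathbf R_\ell$, and cyclicity together with $\mathbf R_\ell\mathbf R_r=\mathbf R$ gives $\Tr\mathbf K(x,x)=-\Tr(\mathbf Q\mathbf R)$.
\end{enumerate}
Only then do you get $\det_1=e^{\Tr\mathcal K}\det_2=e^{\Tr\mathcal K+\Theta}E=E$, and \eqref{eq:det2eqE} follows. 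As written, both displayed identities in your proposal rest on a citation of a theorem that is not available in that form.
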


\begin{proof}
Under the hypotheses, the kernel is Lipschitz-continuous by Theorem \ref{Lipschitz}.
Furthermore, by Lemma~\ref{lemma:normKest} and Remark~\ref{rem:normKest},
$\mathbf K(x,y)$ and its partial derivatives decay exponentially as 
$(x,y)$ moves away from the diagonal; that is, the estimate in  \eqref{KandDerivsexp} holds.  Therefore, 
 by Theorem \ref{TraceClassThmRealLine}, $\mathcal{K}$ is trace class.

Gesztesy, Latushkin and Makarov~\cite{EJF} prove a general result relating the
2-modified Fredholm determinant of a Birman-Schwinger operator
to the Evans function. Let
\begin{equation}\label{EofLambda}
    E(\lambda) = \det(Y_+(0; \lambda) + Y_-(0; \lambda)),
\end{equation}
where $Y_{\pm} = Y_{\pm}(x; \lambda)$ are the matrix-valued Jost solutions 
given in \cite[Definition 8.2]{EJF}.  By \cite[Theorem 9.4]{EJF},
$E(\lambda)$ is the Evans function for the stationary pulse $\psi.$
Their general result~\cite[Theorem 8.3]{EJF} is that
\begin{equation}\label{d2ThEvans}
    {\det}_2(\mathcal{I} + \mathcal{K}(\lambda)) = e^{\Theta(\lambda)} E(\lambda),
\end{equation}
where,
\begin{equation}\label{Thetadef}
    \Theta(\lambda) = \int_{0}^{\infty} \Tr(\mathbf{Q}(\lambda)\mathbf{R}(x))\,dx - \int_{-\infty}^0 \Tr((\mathbf{I}-\mathbf{Q}(\lambda))\mathbf{R}(x))\,dx
     = \int_{-\infty}^{\infty} \Tr(\mathbf{Q}(\lambda)\mathbf{R}(x))\,dx,
\end{equation}
where the final equality holds since $\Tr(\textbf{R}(x)) = 0$ by \eqref{Rpertdef}.

On the other hand,
since $\mathcal{K}(\lambda)$ is trace class,  $ \Tr(\mathcal{K}(\lambda))$ and $\det(\mathcal{I} + \mathcal{K}(\lambda))$ are defined and we also have that~\cite{Simon}
\begin{equation}\label{det2tracedet1}
    {\det}_2(\mathcal{I} + \mathcal{K}(\lambda)) = e^{-\Tr(\mathcal{K}(\lambda))} \det(\mathcal{I} + \mathcal{K}(\lambda)).
\end{equation}
Therefore, to prove~\eqref{eq:det1eqE} it suffices to show that $\Tr(\mathcal{K}(\lambda)) = - \Theta(\lambda)$.
Now by~\cite{GZL2025NumericalFredholm}, \eqref{opkernel}, and the fact that
$\Tr(\mathbf A\mathbf B) = \Tr(\mathbf B\mathbf A)$, we have that
\begin{equation}
    \Tr(\mathcal{K}) = \int_{-\infty}^{\infty} \Tr(\mathbf{K}(x,x))\, dx
    = -\int_{-\infty}^{\infty} \Tr(\mathbf{R}_r(x)\mathbf{Q}\mathbf{R}_{\ell}(x))\, dx
    = -\int_{-\infty}^{\infty} \Tr (\mathbf{Q}\mathbf{R}(x)) = -\Theta,
\end{equation} 
as required.
\end{proof}

\begin{remark}
Theorem~\ref{LptKtilde} and Equation (\ref{d2ThEvans}) show that the point spectrum of the linearized operator, $\mathcal L$, can be computed either by finding the zeros of the Evans function, $E(\lambda)$, or the zeros of the $2$-modified Fredholm determinant,
 ${\det}_2(\mathcal{I} + \mathcal{K}(\lambda))$.
However, although it is generally thought to be true, 
in this paper we do not address the question of whether 
the multiplicities of the eigenvalues of $\mathcal L$ equal the 
multiplicities of the zeros of ${\det}_2(\mathcal{I} + \mathcal{K}(\lambda))$ and 
$E(\lambda)$. 
\end{remark}
    
The following result now follows immediately from Theorems \ref{truncthm}, \ref{quadphibound}, \ref{Lipschitz} and \ref{TCthm}.

\begin{theorem}\label{errorbds}
Suppose that $\psi = \psi(x)$ is a stationary pulse solution of the CQ-CGLE which satisfies the assumptions of Theorem~\ref{TCthm}.  Then for $p=1,2$, 
\begin{equation}\label{total2error}
    \left| {\det}_p(\mathcal{I} + \mathcal{K}(\lambda)) - d_{p,Q}(\lambda) \right| \leq e^{-aL}\boldsymbol{\Phi}\left(\frac{8C}{a} \right) + \frac{\sqrt{2 \pi} e}{8}\Delta x \boldsymbol{\Phi}(8L\|\textbf{K}\|_{W^{1,\infty}}),
\end{equation}
where the quadrature approximation, $d_{p,Q}(\lambda)$, is obtained by truncating the operator, $\mathcal K(\lambda)$, to a finite interval $[-L,L]$ which is then discretized with a step size of $\Delta x$ and where
\begin{equation}
   \|\textbf{K}\|_{W^{1, \infty}} = \max\{ \|\partial_x \textbf{K}\|_{L^{\infty}([-L,L]^2, \mathbb{C}^{k \times k})}, \|\partial_y \textbf{K}\|_{L^{\infty}([-L,L]^2, \mathbb{C}^{k \times k})}, \|\textbf{K}\|_{L^{\infty}([-L,L]^2, \mathbb{C}^{k \times k})}\}.
\end{equation}
\end{theorem}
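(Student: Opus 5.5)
The proof combines the two error sources via the triangle inequality:
\begin{equation*}
\bigl|{\det}_p(\mathcal I+\mathcal K)-d_{p,Q}\bigr|\le \bigl|{\det}_p(\mathcal I+\mathcal K)-{\det}_p(\mathcal I+\mathcal K|_{[-L,L]})\bigr|+\bigl|{\det}_p(\mathcal I+\mathcal K|_{[-L,L]})-d_{p,Q}\bigr|.
\end{equation*}
The first term is the truncation error and the second is the quadrature error for the truncated operator. Throughout, $\lambda$ is fixed and $k=4$, the block size of the kernel in Proposition~\ref{kktildej2}.

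For the truncation term, Theorem~\ref{TCthm} gives $\mathcal K(\lambda)\in\mathcal B_1\subset\mathcal B_2$, so Theorem~\ref{truncthm} applies for both $p=1,2$. Its hypothesis \eqref{truncexpass} is exactly the second estimate of Lemma~\ref{lemma:normKest}, namely $|K_{ij}(x,y)|\le C_R(\lambda)e^{-a_R(|x|+|y|)}$. We therefore take $C=C_R(\lambda)$ and $a=a_R$. Substituting $k=4$ into $2Ck/a$ gives $8C/a$, and the first term of \eqref{total2error} follows.

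For the quadrature term, apply Theorem~\ref{quadphibound} with $r=1$ to the restriction $\mathcal K|_{[-L,L]}$. This needs $K_{ij}\in C^{0,1}([-L,L]^2)$, which is Theorem~\ref{Lipschitz}; its hypothesis $\det\mathbf M\neq0$ is supplied by Proposition~\ref{prop:lowroot} under Hypothesis~\ref{hyp:lowroot}. With $r=1$, the constant $2(\pi e/8)/(2\pi)^{1/2}$ simplifies to $\sqrt{2\pi}\,e/8$. The argument $4kL\|\mathbf K\|_1$ becomes $16L\|\mathbf K\|_1$ with $k=4$, and $\|\mathbf K\|_1$ coincides with $\|\mathbf K\|_{W^{1,\infty}}$ as defined in the statement. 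The only adjustment needed is the identification $\Delta x_{\max}=\Delta x$ for the uniform grid.

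The main obstacle is reconciling the constant inside $\boldsymbol\Phi$: Theorem~\ref{quadphibound} gives $4kL=16L$, whereas the statement has $8L$. I would first check whether Simpson's rule here is indexed by half-step spacing, since the nodes are $2M+1$ points spaced $\Delta x_j/2$. If so, the quantity $2L$ or the grid-spacing convention would absorb a factor of $2$. Failing that, $\boldsymbol\Phi$ is increasing on $[0,\infty)$, so only the claimed direction matters, and I would examine the conventions in \cite{GZL2025NumericalFredholm} for the block size. A second, minor, point is that $\partial_x\mathbf K$ and $\partial_y\mathbf K$ are defined only almost everywhere (off the diagonal), so their $L^\infty$ norms must be read as essential suprema. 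These are finite by Theorem~\ref{Lipschitz}, and the Lipschitz constant coincides with them.
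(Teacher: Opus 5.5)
Your decomposition is the one the paper intends: the paper simply says the result follows from Theorems~\ref{truncthm}, \ref{quadphibound}, \ref{Lipschitz} and \ref{TCthm}. Your handling of the hypotheses and of the prefactor $\sqrt{2\pi}\,e/8$ is correct. The gap is the one you flag yourself: with $k=4$, Theorem~\ref{quadphibound} gives $\boldsymbol\Phi(16L\|\mathbf K\|_{W^{1,\infty}})$, not $\boldsymbol\Phi(8L\|\mathbf K\|_{W^{1,\infty}})$, and neither of your proposed remedies closes it. Since $\boldsymbol\Phi$ has positive coefficients, it is increasing, so $\boldsymbol\Phi(8L\|\mathbf K\|_{W^{1,\infty}})\le\boldsymbol\Phi(16L\|\mathbf K\|_{W^{1,\infty}})$. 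The bound you can prove is therefore weaker than the claim, and monotonicity works against you, not for you. Re-indexing the Simpson nodes by half-steps only changes the factor $\Delta x_{\max}$; it does not touch the argument $4kL\|\mathbf K\|_r$ of $\boldsymbol\Phi$. As written, your argument does not establish \eqref{total2error}.

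The missing observation is that the kernel is effectively $2\times 2$. By \eqref{eq:redefRrl}, $\mathbf R_r(x)$ vanishes outside its first two rows and $\mathbf R_\ell(y)$ vanishes outside its first two columns. Hence by \eqref{opkernel}, $\mathbf K(x,y)$ vanishes outside its upper-left $2\times2$ block $\mathbf K_{11}(x,y)$, and $\mathcal K(\lambda)=\mathcal K_{11}\oplus 0$ on $L^2(\mathbb R,\mathbb C^2)\oplus L^2(\mathbb R,\mathbb C^2)$. This reduction preserves everything that enters the estimates:
\begin{itemize}
\item ${\det}_p(\mathcal I+\mathcal K)={\det}_p(\mathcal I+\mathcal K_{11})$, and likewise for the truncations to $[-L,L]$.
\item After a permutation of indices, $\mathbf I+\mathbf K_Q$ is block diagonal with blocks $\mathbf I+(\mathbf K_{11})_Q$ and $\mathbf I$, and $\Tr\mathbf K_Q=\Tr(\mathbf K_{11})_Q$, so $d_{p,Q}$ is unchanged.
\item The matrix norms of $\mathbf K$, $\partial_x\mathbf K$, $\partial_y\mathbf K$ equal those of their $2\times2$ blocks, since zero padding does not change them.
\end{itemize}
Now apply Theorem~\ref{quadphibound} with $k=2$, which gives exactly $4\cdot 2\cdot L=8L$. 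Apply Theorem~\ref{truncthm} with $k=2$, which gives $\boldsymbol\Phi(4C/a)$. Here monotonicity legitimately yields $\boldsymbol\Phi(4C/a)\le\boldsymbol\Phi(8C/a)$, recovering the first term as stated. The stated pair of constants $8C/a$ and $8L$ cannot both come from a direct $k=4$ application, so some such reduction is implicit in the paper's one-line justification.
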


\section{Numerical Results}\label{sec:NumResults}

In this section, we present the results of numerical simulations which validate the theory developed in section \ref{sec:MainTheorems}.
First, we numerically compute the regular and 2-modified Fredholm determinants
of the Birman-Schwinger operator, $\mathcal K(\lambda)$, for the  hyperbolic secant solution of the NLSE. We validate these computations using an analytical formula for the Evans function 
of the hyperbolic secant pulse given in Appendix~\ref{App:EvansSech} and  a general formula for $\Tr(\mathcal K(\lambda))$
given in Appendix~\ref{App:TraceK}.
Second, we compute the point spectrum for a numerically computed pulse solution of the CGLE by using a  root-finding method to 
determine the zeros of the approximated regular
Fredholm determinant of $\mathcal K(\lambda)$.
We validate this computation by comparison to results obtained by
Shen  et al.~\cite{shen2016spectra} using a method that is similar in spirit to numerical Evans function
methods, but which involves the iterative solution of a nonlinear eigenproblem.

By Remark~\ref{RemSecantTraceClass}, the 
Birman-Schwinger operator, $\mathcal K(\lambda)$, for the hyperbolic secant solution of the NLSE is trace class. 
In Theorem~\ref{EvansThm}, we provide a formula for the Evans function, $E(\lambda)$, of this pulse
and in \eqref{AppB:TraceKNLSE} we provide a formula for $\Tr(\mathcal K(\lambda))$.
Consequently, we can validate the numerical computation of the
regular and 2-modified Fredholm determinants of $\mathcal K(\lambda)$
with the aid of \eqref{eq:det1eqE} and \eqref{eq:det2eqE} in Theorem~\ref{TCthm}.
The Evans function has a zero of multiplicity four at $\lambda=0$ and is
finite at the edges of the essential spectrum at $\lambda = \pm i/2$ (see Remark~\ref{AppARem:EvansEss}). 
In \cite{GZL2025NumericalFredholm}, we validated the convergence result in 
Theorem~\ref{errorbds} at $\lambda=0$ by studying how the Fredholm determinant
converges to zero as the width $L$ of the simulation window increases and the
step size, $\Delta x$, of the discretization of that window decreases.

In Figure~\ref{fig:Det1Det2Sech}, we compare the numerically approximated regular and 2-modified Fredholm determinants 
to  the analytical formulae given by the right-hand sides of
\eqref{eq:det1eqE} and \eqref{eq:det2eqE}. In the left panel, we show the results
when $\lambda$ is varied from $-1$ to $1$ along the real axis.
These results were obtained with $L\approx7.32$ and $\Delta x = 0.0586$ ($N= 251$ points).
In the right panel, by zooming in near $\lambda=0$, we 
illustrate the nature of the convergence 
of the approximate regular Fredholm determinant 
to the analytical Evans function
as $\Delta x$ decreases. The restriction of the 
absolute value of the 
approximate Fredholm determinant to the real axis
has two symmetrically positioned minima whose distance
from $\lambda=0$ halves each time $\Delta x$ is halved. 
We note that a minimum of $|\det_1(\mathcal I + \mathcal K(\lambda))|$ corresponds to a zero of $\det_1(\mathcal I + \mathcal K(\lambda))$.

In the left panel of Figure~\ref{fig:SechAndYannan}
we  show the the results when $\lambda$ is varied from the origin to the edge of the essential spectrum along the imaginary axis.  
We note that $|e^{-\Tr(\mathcal K(\lambda))}|>1$ since, by \eqref{AppB:TraceKNLSE},
the real part of 
$\Tr(\mathcal K(\lambda))$ is negative for $\lambda \notin \sigma_{\rm ess}(\mathcal L)$. Consequently, $|\det_2(\cI + \cK(\lambda)) | > |\det_1(\cI + \cK(\lambda)) |$. Moreover, since $\Re(\Tr(\mathcal K(\lambda)))\to -\infty$
as $\lambda \to \pm i/2$, the 2-modified determinant blows up at the edges of the essential spectrum. 
Even though the trace (and hence the trace class norm~\cite{Simon}) blows up, it is significant that
the Evans function, and hence the regular Fredholm determinant, remain bounded as $\lambda\to \pm i/2$.
This fact could not be predicted from the general theory of Fredholm determinants, which only gives a bound
for $\det_1$ in terms of the trace class norm~\cite{Simon}.
Finally, we observe that
the agreement  between the numerical 
approximations and the analytical formulae is excellent, both at the order four zero at $\lambda=0$ (Figure~\ref{fig:Det1Det2Sech}) 
and at the singularity of the 2-modified determinant at $\lambda = i/2$ (Figure~\ref{fig:SechAndYannan}, left).

\begin{figure}[!thb]
    \centering
    \includegraphics[width=.485\linewidth]{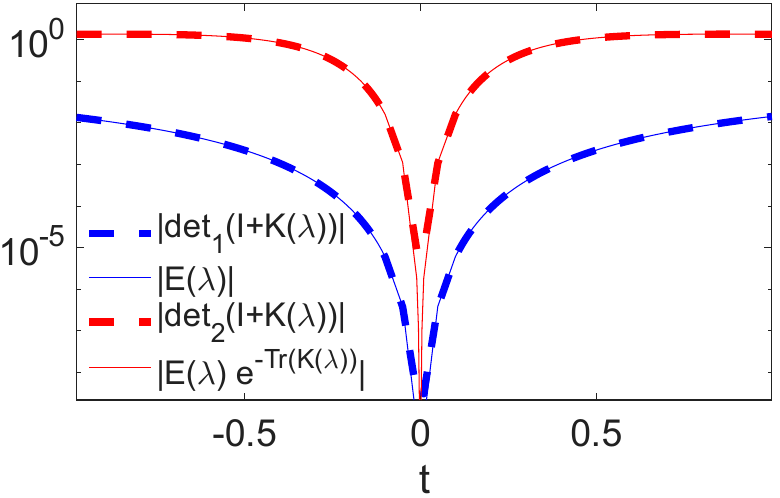}
     \includegraphics[width=0.5
     \linewidth]{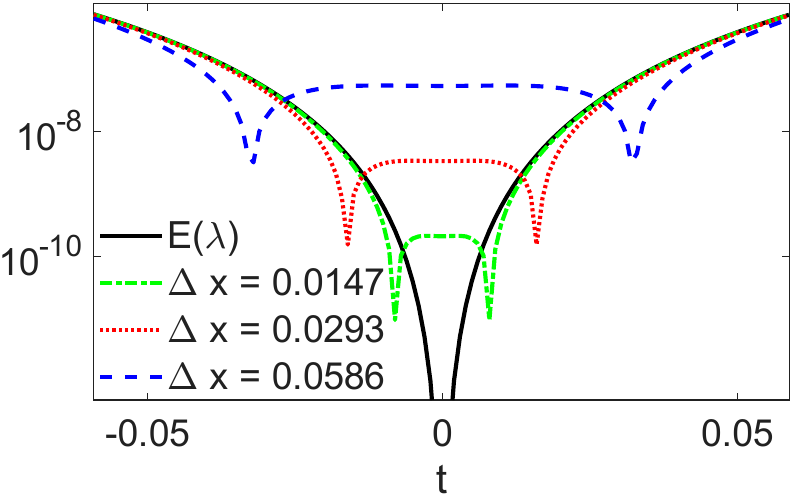}
    \caption{{\bf Left:} The numerically-approximated regular and $2$-modified Fredholm determinants for the sech solution of the NLSE, as compared to 
    analytical formulae given in terms of the Evans function, evaluated along the line $\lambda(t) = t$. {\bf Right:} 
    Zoomed in version of results on the left in which the 
    regular Fredholm determinant is computed with finer discretizations,
    $\Delta x$, across the pulse} 
    \label{fig:Det1Det2Sech}
\end{figure}

For our second validation, we compare the point spectrum for a stationary CGLE pulse computed from the zero set of $\det_1(\cI + \cK(\lambda))$ with a numerical result obtained 
by Shen et al.~\cite{shen2016spectra}. For this result, we chose the
parameters in the CGLE~\eqref{cqcgle} to be 
$D = -0.001$, $\gamma = 1$, $\nu= 10$, $\delta = -0.01$, $\epsilon = 1$,
$\beta = 0.08$, and  $\mu = -3$. 
Starting from an initial Gaussian pulse, 
we used the Fourier split-step method~\cite{sinkin2003optimization} with a wide spatial window  to obtain a stationary pulse. 
We observed that the numerically computed stationary pulse, $\psi$, (not shown) appears to  
be $C^1$, to decay exponentially, and to be nonzero. In addition, the 
quantity, $r_-$, in Hypothesis~\ref{hyp:lowroot} is not real. 
 Therefore, by Proposition~\ref{prop:lowroot} and Theorem~\ref{TCthm}, the operator, $\mathcal K(\lambda)$, is trace class. 
For the computation 
of the Fredholm determinant, we truncated the spatial window to $L\approx 7.32$, and used 
$\Delta x = 0.0586$ ($N=251$ points), as before.
Then we used a method of Lyness and Delves~\cite{delves1967numerical,lyness1967numerical} to locate the roots
of the complex analytical function, $\lambda \mapsto d_{1,Q}(\lambda)$, which approximates $\det_1(\cI + \cK(\lambda))$. 
With this method, contour integrals are first used to count zeros (with multiplicity)
within given small regions, and then a version of 
Newton's method for complex analytic functions is used to
determine the precise locations of these zeros. 
In the right panel of Figure~\ref{fig:SechAndYannan}, we show the essential spectrum (blue lines) and 
eigenvalues as computed from the regular Fredholm determinant (red pluses) and by Shen et al.~\cite{shen2016spectra} (blue circles). The agreement between the two completely different numerical methods is excellent.
We observe that the two complex eigenvalues are just barely stable, lying very close to the imaginary axis. Nevertheless, we are able to locate all the eigenvalues accurately, even in this situation. 
The two roots found by Lyness method near $\lambda=0$ were located on the negative real axis at $\lambda= -0.1230\times 10^{-4}$ and 
$\lambda= -0.3826\times 10^{-4}$. The value of the Fredholm determinant at these two numerically computed roots was on the order of $10^{-17}$
while the value at $\lambda = 0$ was on the order of $10^{-9}$.
We observed  similar trends when Lyness method was applied to the
hyperbolic secant pulse. 
The values of  the Fredholm determinant at the  two complex eigenvalues near the imaginary axis and  the other two real eigenvalues  were on the  order of $10^{-13}$ and $10^{-15}$, respectively. 

\begin{figure}[!thb]
    \centering
    \includegraphics[width=0.49\linewidth]{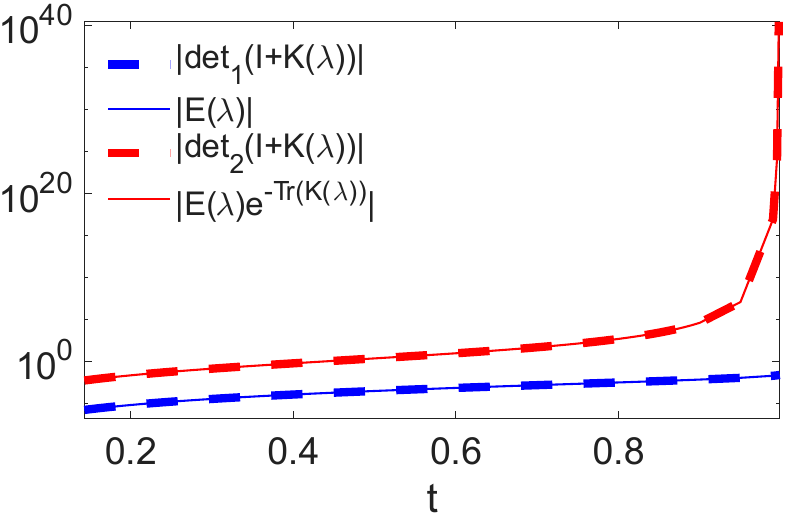}
       \includegraphics[width=0.49\linewidth]
       {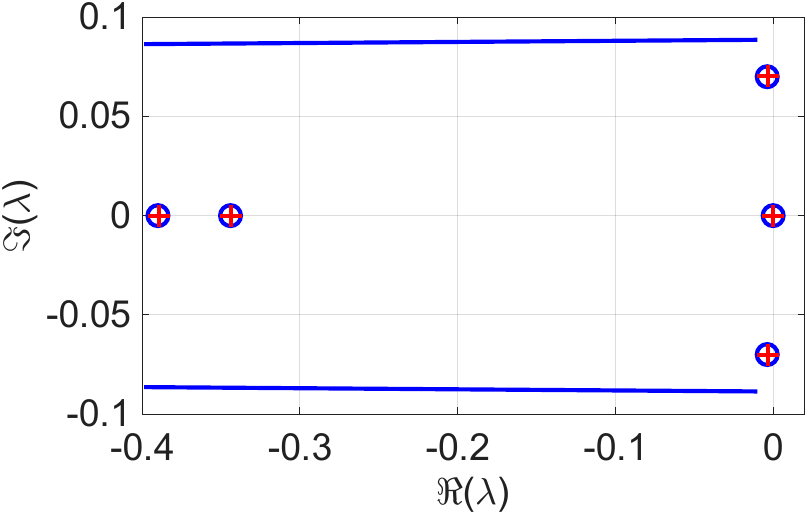}
    \caption{{\bf Left:} 
     Behavior of the regular and $2-$modified Fredholm determinants, 
     for the
sech solution of the NLSE, 
      compared to analytical formulae given in terms of the Evans function, as
     $\lambda$ approaches the edge of $\sigma_{{\rm ess}}(\mathcal{L})$ along the line $\lambda (t) = it/2$.
        {\bf Right:}  
Essential spectrum (solid blue lines) and eigenvalues, 
for a  stationary pulse solution of the 
CGLE, computed using the approximate regular Fredholm determinant (red pluses) compared with results obtained by Shen et al.~\cite{shen2016spectra} (blue circles)}
    \label{fig:SechAndYannan}
\end{figure}

In the left panel of Figure~\ref{fig:YanDetsToEdge}, we plot the real and imaginary parts of the regular Fredholm determinant along the line joining the 
eigenvalue at zero to the eigenvalue, $\lambda_c$,  near the edge of the essential spectrum. 
In the right panel, we plot the regular and 2-modified Fredholm determinants
along the line from $\lambda_c$ to the edge, $\lambda_e$, 
of the essential spectrum. We observe
that the regular Fredholm determinant remains bounded, while the 
2-modified determinant blows up as $\lambda \to \lambda_e$. 
This phenomenon was already presaged in Remark~\ref{rem:condP} on the blow up of $\operatorname{cond}(\mathbf P(\lambda))$.
To provide  a more complete analysis, in Theorem~\ref{AppB:TraceKThm} we prove that for any pulse solution of the CGLE for which $\mathcal K(\lambda)$ is trace class, there is a path $\lambda=\lambda(t)$ converging to the
edge of the essential spectrum along which 
$\Re(\Tr\,\mathcal K(\lambda(t)))\to -\infty$ and hence
$|e^{-\Tr\,\mathcal K(\lambda(t))}|\to\infty$. 
Since $\Tr (\mathcal K)  \leq \|\mathcal K\|_{\mathcal B_1}$
is dominated by the trace class norm~\cite{Simon}, we conclude that
$\|\mathcal K(\lambda(t))\|_{\mathcal B_1}\to\infty$ as well. 
Consequently, the 2-modified Fredholm determinant must also blow up, since 
even if $\det_1(\mathcal I + \mathcal K(\lambda))$ were to have a zero at $\lambda=\lambda_e$, this zero would
have finite order and so could not cancel out the
essential singularity in $e^{-\Tr\,\mathcal K(\lambda(t))}$.
On the other hand,
the numerics convincingly show that $\det_1(\mathcal I + \mathcal K(\lambda(t)))$ remains bounded as $\lambda\to\lambda_e$, just as in the case of the NLSE. 
However, in this paper we do not theoretically address  whether the regular Fredholm determinant (or equivalently, the Evans function) remains bounded for a general CGLE pulse.
The extremely rapid blow up of the 2-modified Fredholm determinant
along paths from eigenvalues, $\lambda_c, $ near the edge of the essential spectrum to $\lambda_e$, 
suggests that it is not practical to compute such eigenvalues
using the 2-modified determinant.

\begin{figure}[!thb]
    \centering
  \includegraphics[width=0.49\linewidth]{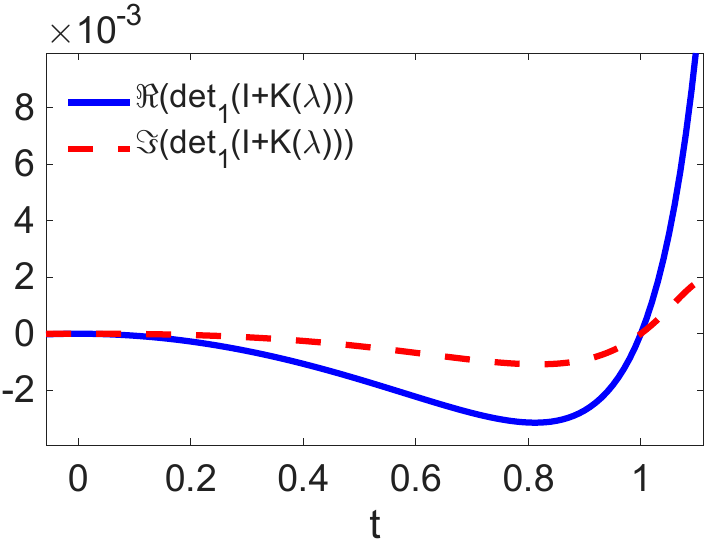}
       \includegraphics[width=0.49\linewidth]{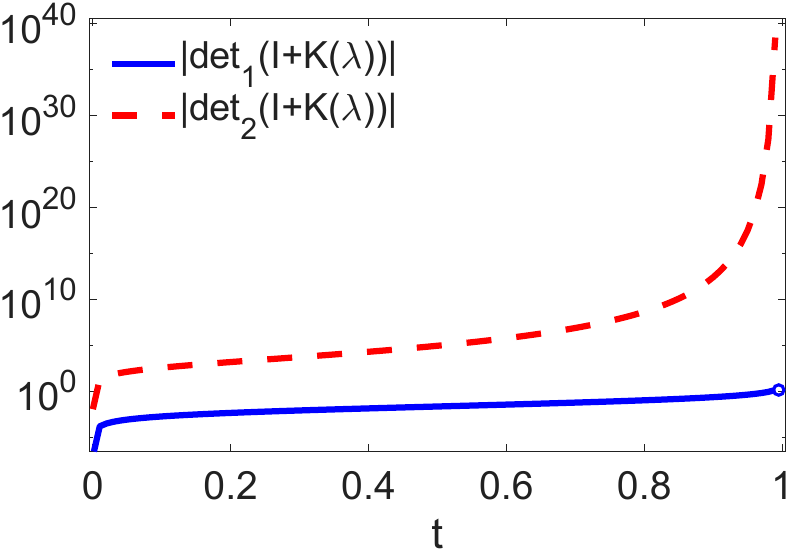}
    \caption{
    {\bf Left:} Approximate regular Fredholm determinant,
    for the stationary pulse solution of the 
CGLE, along the line $\lambda(t) = \lambda_ct,$ between eigenvalue $\lambda = 0$ and complex eigenvalue $\lambda_c \approx -0.0033 + 0.0704i$. 
    {\bf Right:} Behavior of the regular (dashed red line) and $2-$modified (solid blue line) Fredholm determinants as $\lambda$ approaches the edge, $\lambda_{{e}}$, of the essential spectrum along the line $\lambda(t) = \lambda_c + (\lambda_{{e}} - \lambda_c)t$
    }
    \label{fig:YanDetsToEdge}
\end{figure}

\appendix

\section{The Evans function for the hyperbolic secant pulse}\label{App:EvansSech}

In this appendix, we derive a formula for the Evans function of the 
hyperbolic secant solution of the NLSE using the definition  given by Gesztesy, Latushkin and Makarov~\cite{EJF}.
The calculation is related to the one in Kapitula and Promislow~\cite[Sec.\ 10.4.1]{Kap}.
However, as we show in Remark~\ref{KapEJFNorm} below, the Jost solutions in \cite{Kap} have a different $\lambda$-dependent normalization from those in~\cite{EJF}. 
The normalization 
in \cite{EJF} is the unique one for which the Evans function is equal
to the regular Fredholm determinant of the trace class Birman-Schwinger 
operator, $\mathcal K(\lambda)$. Due to these differing normalizations, 
the Evans function in \cite{Kap} converges to zero at the edge of the essential
spectrum, while the Evans function we use converges to a nonzero finite value.
We now state our main result. 

\begin{theorem}\label{EvansThm}
Let $\psi(t,x) = \sech(x) $ be the hyperbolic  secant solution of the NLSE
\begin{equation}\label{yuripde}
        i \partial_t \psi + \tfrac 12 (\partial_x^2 - 1) \psi + |\psi|^2 \psi = 0,
    \end{equation}
    and let $\mathcal L$ be the differential operator obtaining by linearizing
    \eqref{yuripde} about $\psi$, which is given by \eqref{cq3}
with $D=1$, $\gamma=1$, $\alpha=-\tfrac 12$ and all other parameters set to zero.
Then the Evans function of the eigen-problem $\mathcal L \mathbf p = \lambda \mathbf p$ is given by
 \begin{equation}\label{eq:EvansSech}
        E(\lambda) := \frac{-16 \lambda^4}{\left[1 + \sqrt{1 - 2i \lambda}\right]^4 \,\left[1 + \sqrt{1 + 2i \lambda}\right]^4},
\end{equation}
where $\sqrt{\cdot}$ denotes the principal branch of the square root.
\end{theorem}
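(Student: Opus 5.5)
The plan is to compute $E(\lambda)=\det(Y_+(0;\lambda)+Y_-(0;\lambda))$ directly from explicit Jost solutions. This is possible because the linearization about $\operatorname{sech}$ is a reflectionless problem whose decaying solutions are elementary functions. With $D=1$, $\gamma=1$, $\alpha=-\tfrac12$ we have $\mathbf B=\tfrac12\left[\begin{smallmatrix}0&-1\\1&0\end{smallmatrix}\right]$, so $\det\mathbf B=\tfrac14$. From \eqref{eq:sigmapmdef} we get $a=1$ and $b=-2\lambda$, hence $\sigma_\pm=\sqrt{1\mp 2i\lambda}$ (principal branch). This already accounts for the factors $1+\sqrt{1\pm 2i\lambda}$ in \eqref{eq:EvansSech}. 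I would work in the complex coordinates $w=p_1+ip_2$, $\bar w=p_1-ip_2$, which conjugate $\mathbf B$ and $\mathbf N_0$ to diagonal form. In these coordinates $\mathbf P$ in \eqref{P} becomes essentially block-diagonal, so that $\sigma_+$ and $\sigma_-$ each label one complex direction.

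First, I would write down the four solutions of the perturbed system \eqref{systempert} with prescribed exponential behaviour. For each exponent $s\in\{\pm\sigma_+,\pm\sigma_-\}$ there is a solution of the form
\[
\mathbf p_s(x)=e^{sx}\big(\mathbf c_0(s)+\mathbf c_1(s)\tanh x+\mathbf c_2(s)\operatorname{sech}^2 x\big).
\]
This is the standard squared-eigenfunction structure of Kaup, as used in \cite[Sec.\ 10.4.1]{Kap}. The polynomial coefficients are found by substituting into $(\mathcal L-\lambda)\mathbf p=0$ and matching powers of $\tanh$. Next I would normalize these solutions as in \cite[Definition 8.2]{EJF}. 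That is, $Y_+(x)$ is the $4\times4$ matrix solution with $Y_+(x)-e^{\mathbf A_\infty x}\mathbf Q\to0$ times a decaying factor as $x\to+\infty$, built from the two columns with exponents $-\sigma_\pm$. Similarly, $Y_-(x)\sim e^{\mathbf A_\infty x}(\mathbf I-\mathbf Q)$ as $x\to-\infty$, built from the exponents $+\sigma_\pm$. Concretely, $Y_+(x)=[\mathbf u_{-\sigma_-}\ \mathbf u_{-\sigma_+}\ 0\ 0]\mathbf P^{-1}$, where each $\mathbf u_s=[\mathbf p_s;\partial_x\mathbf p_s]$ is scaled so that its limit coefficient at $+\infty$ equals the corresponding column of $\mathbf P$ in \eqref{P}; $Y_-$ is defined analogously with the limit taken at $-\infty$. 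Because $\tanh x\to\pm1$ at $\pm\infty$, the two normalizations differ by the factor $(\mathbf c_0-\mathbf c_1)/(\mathbf c_0+\mathbf c_1)$. This factor is exactly where rational expressions in $1+\sigma_\pm$ will appear.

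Then $Y_+(0)+Y_-(0)=[\mathbf u_{-\sigma_-}(0)\ \mathbf u_{-\sigma_+}(0)\ \mathbf u_{\sigma_+}(0)\ \mathbf u_{\sigma_-}(0)]\,\mathbf P^{-1}$. Its determinant is a $4\times4$ Wronskian-type determinant of the normalized solutions at $x=0$, divided by $\det\mathbf P$. I would evaluate it using the parity symmetry $x\mapsto -x$, which exchanges $\mathbf p_s$ and $\mathbf p_{-s}$ up to a sign, to reduce it to a product of $2\times2$ determinants, one for each of $\sigma_\pm$. Each factor should be of the form $(\text{const})\,\lambda^2/(1+\sigma_\pm)^4$ after using $\sigma_+^2+\sigma_-^2=2$ and $\sigma_\pm^2-1=\mp2i\lambda$. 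The product then gives \eqref{eq:EvansSech}, with the overall constant $-16$ fixed by bookkeeping of $\det\mathbf P$. As sanity checks I would verify the order-four zero at $\lambda=0$, which comes from the symmetry modes $\psi_x$ and $i\psi$ and their generalized kernel, and the limit $E(\lambda)\to1$ as $|\lambda|\to\infty$ away from $\sigma_{\rm ess}$, as required by \eqref{eq:det1eqE}.

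The main obstacle is the normalization and the final determinant. The coefficients $\mathbf c_j(s)$ are easy to compute, but tracking the $\lambda$-dependent scalings forced by \cite[Definition 8.2]{EJF} is delicate. These scalings are exactly what distinguish this $E$ from the one in \cite{Kap}, which vanishes at the edge of $\sigma_{\rm ess}$. Carrying the $\mathbf P^{-1}$ factor through the $4\times4$ determinant without sign errors is also error-prone. If the direct determinant proves unwieldy, I would first do the computation in the $w,\bar w$ coordinates, where $\mathbf P$ is a permutation of a block-diagonal matrix. As an independent check I would cross-verify the result against $\det_1(\mathcal I+\mathcal K(\lambda))$ for real $\lambda\to+\infty$.
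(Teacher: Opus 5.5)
Your plan follows the paper's proof in outline, and executed faithfully it would prove the formula with $+16$, not the stated $-16$.

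\textbf{Shared route.} The paper also works in the complex form $\boldsymbol\phi=[\psi\ \overline{\psi}]^T$, which is your $w,\bar w$ coordinates. Write $\widehat\lambda$ for the theorem's spectral parameter and $\lambda=-2i\widehat\lambda$ for that of the complex form, as the paper does; the exponents are then $\mu=\sqrt{1-\lambda}$ and $\nu=\sqrt{1+\lambda}$. The paper finds the decaying solutions $\mathbf p,\mathbf q$ from the same $\tanh/\sech^2$ ansatz and normalizes them column by column against $\Phi(x)\mathbf Q_j$ as in \cite{EJF}. It builds $\mathbf Y_-$ from $\mathbf Y_+$ via $x\mapsto-x$ and evaluates a block-diagonal $\mathbf Y(0)$.

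\textbf{Two corrections to your sketch.}
\begin{enumerate}
\item The parity reduction does not give one $2\times2$ block per exponent. It separates positions from derivatives, giving $\det[2a_1\mathbf q(0)\ \ 2b_2\mathbf p(0)]$ and $\det[2a_3\mathbf q'(0)\ \ 2b_4\mathbf p'(0)]$. Each block mixes $\mu$ and $\nu$, because the $\sech^2x\,\mathbf w$ term couples both components at $x=0$. Each block equals $-\lambda^2/((1+\mu)^2(1+\nu)^2)$.
\item The factors $(1+\sigma_\pm)^{-4}$ do not come from a ratio of normalizations at $\pm\infty$. By parity, the $-\infty$ normalization of $\mathbf p_s$ equals the $+\infty$ normalization of $\mathbf p_{-s}$. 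The factors are simply the normalizing constants, e.g.\ $b_2=1/(2(\lambda-2-2\mu))=-1/(2(1+\mu)^2)$ and $b_4=-b_2/\mu$, since the bracket in $\mathbf p$ tends to $-(1+\mu)^2\mathbf v$.
\end{enumerate}
The normalization must also be imposed column by column with individual rates, as you do concretely. A single rate for the whole matrix would leave the scale of the faster-decaying column undetermined when $\Re\sigma_+\neq\Re\sigma_-$.

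\textbf{The genuine gap: the overall constant.} Your formula is $\det(\mathbf Y_+(0)+\mathbf Y_-(0))=\det[\mathbf u_{-\sigma_-}(0)\ \mathbf u_{-\sigma_+}(0)\ \mathbf u_{\sigma_+}(0)\ \mathbf u_{\sigma_-}(0)]\,\det\mathbf P^{-1}$. Each $\mathbf u_s$ is normalized to a column of $\mathbf P$, so $\det\mathbf P$ cancels identically and no bookkeeping of it can produce a sign.

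Your own sanity check also contradicts the target. Since $(1+\sqrt{1-2i\widehat\lambda})^4(1+\sqrt{1+2i\widehat\lambda})^4\sim(1+4\widehat\lambda^2)^2$, the displayed formula tends to $-1$, and it is negative for every real $\widehat\lambda\neq0$.

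The paper gets the minus sign only in its last step, writing $\widehat E(\widehat\lambda)=\det(\widetilde{\mathbf P})\det(\mathbf Y(0))$ with $\det\widetilde{\mathbf P}=-1$. That step is inconsistent with the normalization in \cite{EJF}. Jost solutions normalized to $e^{x\mathbf A}\mathbf Q_j$ transform under the constant change of variables by conjugation, $\widehat{\mathbf Y}_\pm=\widetilde{\mathbf P}\mathbf Y_\pm\widetilde{\mathbf P}^{-1}$. So the Evans function is invariant, and Proposition~\ref{AltEvansThm} gives $+16\widehat\lambda^4/[\cdots]$.

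Theorem~\ref{TCthm} forces the same conclusion. As $\widehat\lambda\to+\infty$, \eqref{AppB:TraceKNLSE} gives $\Tr\mathcal K\to0$. Also $\|\mathcal K\|_{\mathcal B_2}\to0$, because the nonzero block of $\mathbf K$ carries a factor $1/\sigma_\pm$ from the top rows of $\mathbf P$. Hence ${\det}_2(\mathcal I+\mathcal K)\to1$ and ${\det}_1(\mathcal I+\mathcal K)=e^{\Tr\mathcal K}{\det}_2(\mathcal I+\mathcal K)\to1$.

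Keep your limit check, and flag the sign in the statement rather than trying to extract $-16$ from $\det\mathbf P$.
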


\begin{remark}\label{AppARem:EvansEss}
The essential spectrum, $\sigma_{\rm ess}(\mathcal L)$, is the union of the half-lines, $\pm i[1/2, \infty)$,
on the imaginary axis. Since $\sqrt{z}$ always has a nonnegative real part 
the denominator in \eqref{eq:EvansSech} is never zero. Therefore
the Evans function in \eqref{eq:EvansSech} is defined  on the entire complex plane, is complex analytic on $\mathbb C \setminus \sigma_{\rm ess}(\mathcal L)$
and is discontinuous across $\sigma_{\rm ess}(\mathcal L)$. In addition, $E(-\lambda)
= E(\lambda)$ for all $\lambda \in \mathbb C \setminus \sigma_{\rm ess}(\mathcal L)$.   
\end{remark}

\begin{proof}[Proof of Theorem~\ref{EvansThm}]

The eigen-problem is given by
\begin{equation}\label{eq:AppOrigEP}
0\,\,=\,\,(\mathcal L - \widehat\lambda) \widehat{\mathbf p} \,\,=\,\,
\left( \tfrac 12 (\partial_x^2 - \mathbf I)\mathbf J -  \widehat\lambda \mathbf I 
+ \sech^2(x) \begin{bmatrix}0 &-1\\3&0\end{bmatrix}\right) \widehat{\mathbf p},
\end{equation}
where we use the notation $\widehat\lambda$ in place of $\lambda$ to simplify
notation below and where
$\mathbf J = \begin{bmatrix} 0&-1\\1&0\end{bmatrix}$.
We calculate the Evans function for an 
alternate form of the linearization of the NLSE, which we then
 transform back into the 
original formulation to obtain \eqref{eq:EvansSech}. 
The alternate form is given by linearizing the NLSE about a
complex vector-valued solution, $\boldsymbol{\phi} = [\psi \, \overline{\psi}]^T$, whereas the 
original form is given by linearizing about the
real vector-valued solution, $\boldsymbol{\psi} = [\Re(\psi) \, \Im(\psi)]^T$.
These two formulations  are related by $\boldsymbol{\psi} = \textbf{P} \boldsymbol{\phi}$ for the unitary matrix
\begin{equation}
\mathbf P \,\,=\,\,\frac 1{\sqrt{2}}
\begin{bmatrix}
1&1\\-i&i
\end{bmatrix}.
\end{equation}

The alternate form of the linearization is obtained by multiplying \eqref{eq:AppOrigEP} on the left by $i\sigma_3 \mathbf P^*$ and 
making the substitution $\widehat{\mathbf p} = \mathbf P \mathbf p$, to obtain
the equivalent eigen-problem  in \cite[Sec.\ 10.4.1]{Kap} given by
\begin{equation}\label{cLevp}
\left[\partial_x^2-\bfI_{2\times2}-\lambda\boldsymbol\sigma_3+2\sech^2(x)\boldsymbol\sigma\right]
\mathbf p=0, \qquad\text{where } \lambda = -2i\widehat\lambda.
\end{equation}
Here and below we use the  notation,
\begin{align}\label{notsigma}
\boldsymbol\sigma_3=\begin{bmatrix}1&0\\0&-1\end{bmatrix}, \, 
\boldsymbol\sigma_2=i \begin{bmatrix}0&-1\\1&0\end{bmatrix},\,
\boldsymbol\sigma=\begin{bmatrix}2&1\\1&2\end{bmatrix},\,
\mathbf u=\begin{bmatrix}1\\0\end{bmatrix}, \,
\mathbf v=\begin{bmatrix}0\\1\end{bmatrix},\,
\mathbf w=\begin{bmatrix}1\\1\end{bmatrix}.
\end{align}

Converting \eqref{eq:AppOrigEP} and \eqref{cLevp} to $4\times 4$ systems of first-order equations, $\partial_x \widehat{\mathbf Y} = \left[ \widehat{\mathbf A}_\infty(\widehat\lambda)
+ \widehat{\mathbf R}(x) \right] \widehat{\mathbf Y}$
and $\partial_x {\mathbf Y} = \left[ {\mathbf A}_\infty(\lambda)
+ {\mathbf R}(x) \right] {\mathbf Y}$, we find that 
$\widehat{\mathbf Y} =  \widetilde{\mathbf P}{\mathbf Y}$,
where $\widetilde{\mathbf P} = \begin{bmatrix} \mathbf P & \mathbf 0 \\ \mathbf 0 & \mathbf P \end{bmatrix}$.
Consequently, the generalized matrix-valued Jost solutions in \cite[Definition 7.2]{EJF} for these two problems
are also 
related by $\mathbf Y_\pm(x) =  \widetilde{\mathbf P}\widehat{\mathbf Y}_\pm(x)$.
Therefore, letting $\mathbf Y(x) = \mathbf Y_+(x) +  \mathbf Y_-(x)$, 
the two Evans functions defined as in \cite[Definition 7.5]{EJF} are related by
$\widehat E(\widehat\lambda) = \det( \widehat{\mathbf Y}(0))
= \det( \widetilde{\mathbf P}) \det({\mathbf Y}(0))
= - E(\lambda)$. Theorem~\ref{EvansThm} now follows from \eqref{cLevp} and Proposition~\ref{AltEvansThm} below.  
\end{proof}

\begin{proposition}\label{AltEvansThm} The Evans function 
for the eigen-problem \eqref{cLevp} is given by 
\begin{equation}\label{eff}
E(\lambda)=\frac{\lambda^4}{(1+\mu(\lambda))^4(1+\nu(\lambda))^4},
\end{equation}
where
\begin{equation}\label{defmunu}
 \mu=\sqrt{1-\lambda} \qquad\text{and}\qquad  \nu=\sqrt{1+\lambda}.
\end{equation}
\end{proposition}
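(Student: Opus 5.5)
The plan is to compute $E(\lambda)=\det(\mathbf Y_+(0;\lambda)+\mathbf Y_-(0;\lambda))$ directly from the definition in \cite{EJF}. The matrix Jost solutions $\mathbf Y_\pm$ are $4\times4$ solutions of $\partial_x\mathbf Y=[\mathbf A_\infty(\lambda)+\mathbf R(x)]\mathbf Y$, normalized by
\begin{equation*}
\mathbf Y_+(x)-e^{\mathbf A_\infty x}\mathbf Q\to\mathbf 0 \quad (x\to+\infty),
\qquad
\mathbf Y_-(x)-e^{\mathbf A_\infty x}(\mathbf I-\mathbf Q)\to\mathbf 0 \quad (x\to-\infty),
\end{equation*}
with the convergence exponentially fast relative to the respective dichotomy rates. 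For \eqref{cLevp} the asymptotic system decouples: the first component satisfies $p_1''=(1+\lambda)p_1$ and the second satisfies $p_2''=(1-\lambda)p_2$. Hence $\mathbf A_\infty(\lambda)$ has eigenvalues $\pm\nu$ with eigenvectors $[\mathbf u,\pm\nu\mathbf u]^T$ and $\pm\mu$ with eigenvectors $[\mathbf v,\pm\mu\mathbf v]^T$. So $\mathbf Q$ and $e^{\mathbf A_\infty x}$ are explicit, and each column of $e^{\mathbf A_\infty x}\mathbf Q$ is a combination of $e^{-\nu x}$ and $e^{-\mu x}$ modes. By linearity it suffices to build four vector solutions of \eqref{cLevp}: two decaying at $+\infty$ like $e^{-\nu x}\mathbf u$ and $e^{-\mu x}\mathbf v$, and two decaying at $-\infty$ like $e^{\nu x}\mathbf u$ and $e^{\mu x}\mathbf v$, each with leading coefficient exactly $1$.

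To get these solutions I would use the reflectionless ansatz of Kaup/Kapitula–Promislow:
\begin{equation*}
\mathbf p(x)=e^{kx}\big[(c_0+c_1\tanh x+c_2\tanh^2x)\,\mathbf u+d\,\sech^2x\,\mathbf v\big],
\end{equation*}
with $k=\pm\nu$, together with the analogous ansatz with $\mathbf u,\mathbf v$ swapped and $k=\pm\mu$. Substituting into \eqref{cLevp} and using $(\tanh)'=\sech^2$ and $\sech^2=1-\tanh^2$, the equation becomes a polynomial identity in $\tanh x$. This gives a small linear system for $c_0,c_1,c_2,d$. Its solution should be proportional to $(k-\tanh x)^2$-type expressions, with coupling coefficient $d\propto\sech^2$ fixed by the off-diagonal entries $2\sech^2$ of $2\sech^2\boldsymbol\sigma$.

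At $x\to+\infty$ we have $\tanh\to1$, so the leading coefficient of the $e^{-\nu x}$ solution is $(-\nu-1)^2=(1+\nu)^2$ (up to sign conventions). Dividing by it imposes the \cite{EJF} normalization. I expect this is exactly where the factors $(1+\nu)^{-2}$ and $(1+\mu)^{-2}$ per solution come from, hence the $(1+\mu)^4(1+\nu)^4$ in the denominator. I would also note explicitly why this differs from \cite{Kap}, as in Remark~\ref{KapEJFNorm}. The solutions decaying at $-\infty$ are obtained from the $x\mapsto-x$ symmetry of the potential, since $\tanh(-x)=-\tanh x$.

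Finally, I would assemble the columns of $\mathbf Y_+(0)+\mathbf Y_-(0)$, i.e.\ the vectors $[\mathbf p(0),\mathbf p'(0)]^T$, using $\tanh 0=0$ and $\sech 0=1$, and evaluate the $4\times4$ determinant. Using $\mu^2+\nu^2=2$ and $\nu^2-\mu^2=2\lambda$, the numerator should factor as $\lambda^4$, which is consistent with the known order-four zero at $\lambda=0$.

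I expect two steps to be the main obstacles. The first is solving the ansatz correctly for the coupled components, in particular checking that the stated polynomial degree suffices; if it does not, I would allow $\tanh x\,\sech^2x$ terms in the $\mathbf v$-component. The second is organizing the determinant computation so that the factor $\lambda^4$ emerges cleanly. For the latter I would first row-reduce using the block structure, pairing the $\mathbf u$- and $\mathbf v$-components. The combination of these $x=0$ values with the $\mathbf Q$ normalization should then yield \eqref{eff}.
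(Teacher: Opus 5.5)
Your plan is correct and is essentially the paper's proof. The paper's ansatz $e^{-\nu x}[(A+B\tanh x)\mathbf u+C\sech^2x\,\mathbf w]$ is your quadratic-in-$\tanh$ ansatz in disguise: it yields $\mathbf q=e^{-\nu x}[-(\nu+\tanh x)^2\mathbf u+\sech^2x\,\mathbf v]$, so no $\tanh x\,\sech^2x$ terms are needed. The Jost matrices are then normalized by dividing by $-(1+\nu)^2$ and $-(1+\mu)^2$, and the $x\mapsto-x$ symmetry makes $\mathbf Y(0)$ block diagonal, giving $E=16a_1b_2a_3b_4\det[\mathbf q(0)\ \mathbf p(0)]\det[\mathbf q'(0)\ \mathbf p'(0)]$ with both determinants proportional to $\lambda^2$. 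The one point to state more carefully is the normalization: in \cite{EJF} it is imposed separately on each rank-one piece $\Phi(x)\mathbf Q_j$ with error $o(e^{\varkappa(\mathbf Q_j)x})$, which is what your mode-by-mode ``leading coefficient $1$'' achieves, and this leaves the slower-decaying Jost column non-unique without affecting $\det\mathbf Y(0)$.
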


\begin{proof}
Throughout, we assume that $\lambda\in\C\setminus(-\infty,-1]$ and 
$\lambda\in\C\setminus[1,+\infty)$,
which ensures that 
$\Re \nu>0$ and  $\Re \mu>0$.
In addition, for definiteness, we will often assume that 
\begin{equation}\label{remunuin}
0 < \Re \nu<\Re \mu
\end{equation}
(which holds, say, when $\lambda\in(-1,0)$), but the cases when $\Re\nu=\Re\mu$ or $\Re\nu>\Re\mu$ could be treated similarly.

First, we find two linearly independent solutions, $\bfp$ and $\bfq$, of \eqref{cLevp} that are exponentially decaying as $x\to +\infty$. For this, we substitute into \eqref{cLevp} the expressions
\begin{equation}
    \rexp^{-\mu x}\left[(A+B\tanh x)\mathbf v+C(\sech^2x)\mathbf w\right] \text{ and }
\rexp^{-\nu x}\left[(A+B\tanh x)\mathbf u+C(\sech^2x)\mathbf w\right],
\end{equation}
 cf.\ notation in \eqref{notsigma}, and use the method of undetermined coefficients to find $A,B,C\in\C$. For the first and second expressions  
 the coefficients satisfy the systems,
 \begin{equation}\begin{cases}2B+4\mu C=0\\ 4C+2A+C(\mu^2-\nu^2)=0\end{cases}
 \qquad\text{and}\qquad
 \, \begin{cases}4C-2\mu B+4A=0\\ 2B+4\mu C=0,\end{cases} \end{equation}
 respectively, which yields the Jost solutions
 \begin{align}\label{bfp}
 \bfp(x)&=\rexp^{-\mu x}\left[(\lambda-2-2\mu\tanh x)\mathbf v+(\sech^2x)\mathbf w\right],\\
 \label{bfq}
 \bfq(x)&=\rexp^{-\nu x}\left[(-\lambda-2-2\nu\tanh x)\mathbf u+(\sech^2x)\mathbf w\right].
 \end{align}

\begin{remark}\label{KapEJFNorm}
 The Jost solutions $\bfp$ and $\bfq$ in \eqref{bfp} and \eqref{bfq} are related to the solutions obtained by Kaup in \cite{Kaup} by
 \begin{equation}
 \bfp(x)=-(1+\mu(\lambda))^2\bfp_\ell(x,\lambda) \qquad\text{and}\qquad
 \bfq(x)=-(1+\nu(\lambda))^2\bfp_r(x,\lambda),
 \end{equation}
where $\bfp_\ell(x,\lambda)$ and $\bfp_r(x,\lambda)$ in given in \cite[pp.\ 330-331]{Kap}. 
\end{remark}
 
 Clearly, 
 \begin{align}\label{bfpp}
 \bfp'(x)&=-\mu\bfp(x)+\rexp^{-\mu x}\big((-2\mu\sech^2 x)\bfv-(2\sech^2x\tanh x)\bfw\big),\\
 \label{bfqp}
 \bfq'(x)&=-\nu\bfq(x)+\rexp^{-\nu x}\big((-2\nu\sech^2 x)\bfu-(2\sech^2x\tanh  x)\bfw\big),
 \end{align}
 and for future use we note that,
 \begin{align}\label{detpzero}
 \det\begin{bmatrix}\bfq(0)&\bfp(0)\end{bmatrix}&=\det\begin{bmatrix}-\lambda-1&1\\1&\lambda-1\end{bmatrix}=-\lambda^2,\\
  \det\begin{bmatrix}\bfq'(0)&\bfp'(0)\end{bmatrix}&=\mu\nu
  \det\begin{bmatrix}-\lambda+1&1\\1&\lambda+1\end{bmatrix}=-\mu\nu\lambda^2.
 \label{detqzero}
 \end{align}
 
 To define the Evans function $E=E(\lambda)$, we will follow \cite{EJF} and study the first order ODE system corresponding to the eigenvalue problem \eqref{cLevp},
 \begin{equation}\label{ODE}\begin{split}
 \bfy'&=(\bfA_0(\lambda)+\bfR(x)\big)\bfy, \,\,
 \bfy=\begin{bmatrix}\bfp\\\bfp'\end{bmatrix},\, x\in\R,\\
\bfA_0(\lambda)&:=\begin{bmatrix}0&\bfI_{2\times2}\\\bfI_{2\times2}+\lambda\boldsymbol\sigma_3&0\end{bmatrix},\,\,
 \bfR(x):=\begin{bmatrix}0&0\\-2\sech^2(x)\boldsymbol\sigma&0\end{bmatrix}.\end{split}
 \end{equation}
We begin with the asymptotic system $\bfy'=\bfA_0(\lambda)\bfy$. The identity \[(\bfA_0(\lambda))^2
=\diag\{\bfI_{2\times2}+\lambda\boldsymbol\sigma_3,\bfI_{2\times2}+\lambda\boldsymbol\sigma_3\}\]
and assertion ${\rm Spec } (\bfI_{2\times 2}+\lambda\boldsymbol\sigma_3)=\{\nu^2,\mu^2\}$
show that the eigenvalues of $\bfA_0(\lambda)$ are given by
\begin{equation}\label{evA}
{\rm Spec } (\bfA_0(\lambda))=\big\{-\mu,-\nu,\nu,\mu\}
\end{equation}
with the respective eigenvectors, 
\begin{equation}\label{eivecA}
\bfy_1=\begin{bmatrix}\bfv\\-\mu \bfv\end{bmatrix},\,
\bfy_2=\begin{bmatrix}\bfu\\-\nu \bfu\end{bmatrix},\,
\bfy_3=\begin{bmatrix}\bfu\\\nu \bfu\end{bmatrix},\,
\bfy_4=\begin{bmatrix}\bfv\\\mu \bfv\end{bmatrix}.\,
\end{equation}
The projection $\bfQ_1$  onto ${\rm Span} \{\bfy_1\}$ parallel to ${\rm Span }\{\bfy_2,\bfy_3,\bfy_4\}$
 and the projection $\bfQ_2$  onto ${\rm Span} \{\bfy_2\}$ parallel to ${\rm Span }\{\bfy_1,\bfy_3,\bfy_4\}$
 are given by 
 \begin{equation}\label{projq1q2}
 \bfQ_1=\begin{bmatrix}0&0&0&0\\0&\frac12&0&-\frac1{2\mu}\\
 0&0&0&0\\0&-\frac{\mu}2&0&\frac12\end{bmatrix}
 \qquad\text{and}\qquad 
  \bfQ_2=\begin{bmatrix}\frac12&0&-\frac1{2\nu}&0\\0&0&0&0\\
 -\frac{\nu}2&0&\frac12&0\\0&0&0&0\end{bmatrix},
 \end{equation}
while $\bfQ_1+\bfQ_2$ is  the projection onto 
${\rm Span }\{\bfy_1,\bfy_2\}$ parallel to 
${\rm Span }\{\bfy_3,\bfy_4\}$ 
Similar formulas without any minus signs hold for $\bfQ_3$ and $\bfQ_4$.

We then directly verify that the fundamental matrix solution $\Phi$ satisfying $\Phi'=\bfA_0(\lambda)\Phi$, $\Phi(0)=\bfI_{4\times4}$ is given by 
\begin{equation}\label{phi}
\Phi(x)=\begin{bmatrix}\cosh(\nu x)&0&\frac1\nu\sinh(\nu x)&0\\
0&\cosh(\mu x)&0&\frac1\mu\sinh(\mu x)\\
\nu\sinh(\nu x)&0&\cosh(\nu x)&0\\
0&\mu\sinh(\mu x)&0&\cosh(\mu x)\end{bmatrix}
\end{equation}
The Bohl (or, equvalently, Lyapunov) exponents, $\varkappa(\bfQ_j)=\varkappa'(\bfQ_j)$,
for the asymptotic system are given by
\begin{equation}\label{bohls}
\varkappa(\bfQ_1)=-\Re \mu,\qquad \varkappa(\bfQ_2)=-\Re \nu,\qquad \varkappa(\bfQ_3)=\Re \nu,\qquad
\varkappa (\bfQ_4)=\Re \mu.
\end{equation}
Using \eqref{projq1q2} and \eqref{phi} we find that
\begin{align}\label{q1phi}
\Phi(x)\bfQ_1&=\begin{bmatrix}0&\rexp^{-\mu x}\widehat{\bfw}_2&0&\rexp^{-\mu x}\widehat{\bfw}_4\end{bmatrix},\qquad \widehat{\bfw}_2:=\begin{bmatrix}\frac12 \bfv\\ -\frac{\mu}2 \bfv\end{bmatrix},\,
\widehat{\bfw}_4:=\begin{bmatrix}-\frac1{2\mu }v\\ \frac12 v\end{bmatrix},\\
\label{q2phi}
\Phi(x)\bfQ_2&=\begin{bmatrix}\rexp^{-\nu x}\widehat{\bfw}_1&0&\rexp^{-\nu x}\widehat{\bfw}_3&0\end{bmatrix},\qquad 
\widehat{\bfw}_1:=\begin{bmatrix}\frac12 \bfu\\ -\frac{\nu}2 \bfu\end{bmatrix},\,
\widehat{\bfw}_3:=\begin{bmatrix}-\frac1{2\nu}  \bfu\\ \frac12 \bfu\end{bmatrix}.
\end{align}

In what follows, for definiteness, we assume that \eqref{remunuin} holds.  
We recall from \cite[Definition 7.2]{EJF} that the 
 generalized matrix Jost solutions are defined to be the $(4\times4)$ matrix solutions $\bfY_+^{(j)}$, $j=1,2$,  and $\bfY_-^{(j)}$, $j=3,4$, of the non-autonomous equation \eqref{ODE} that satisfy the conditions
\begin{align}\label{Jsol+}
&\limsup_{x\to+\infty}\frac1x\log\|\bfY_+^{(j)}-\Phi(x)\bfQ_j\|<\varkappa(\bfQ_j), \qquad j=1,2,\\ \label{Jsol-}
&\liminf_{x\to-\infty}\frac1x\log\|\bfY_-^{(j)}-\Phi(x)\bfQ_j\|>\varkappa(\bfQ_j), \qquad j=3,4.
\end{align}
We also define $\bfY_+(x)=\bfY_+^{(1)}(x)+\bfY_+^{(2)}(x)$ and
$\bfY_-(x)=\bfY_-^{(3)}(x)+\bfY_-^{(4)}(x)$.
Then by \cite[Definition 7.5]{EJF}, the Evans function is defined by
\begin{equation}\label{defEF}
E=E(\lambda)=\det\big(\bfY_+(0)+\bfY_-(0)\big).
\end{equation}

We concentrate first on $\bfY_+$, that is, on $j=1,2$.  By \eqref{bohls}, assertion \eqref{Jsol+} can be rephrased as follows, cf.\ \cite[Theorem 8.3(iv)]{EJF}: We want to find the solutions $\bfY_+^{(1)}$ and $\bfY_+^{(2)}$ of \eqref{ODE} such that
\begin{align}\label{Y1}
&\rexp^{\Re\mu x}\|\bfY_+^{(1)}(x)-\Phi(x)\bfQ_1\|=o(1) \text{ as $x\to+\infty$},\\\label{Y2}
&\rexp^{\Re\nu x}\|\bfY_+^{(2)}(x)-\Phi(x)\bfQ_2\|=o(1) \text{ as $x\to+\infty$}.\end{align}
Since the solutions, $\bfY_+^{(j)}$, of \eqref{ODE} decay as $x\to +\infty$,  the columns of $\bfY_+^{(j)}$ must be linear combinations of the solutions of \eqref{ODE} corresponding to $\bfp$ and $\bfq$ in \eqref{bfp} and \eqref{bfq}. Let $\bfz_k^{(j)}$, $k=1,2,3,4$,  denote the columns of the matrix $\bfY_+^{(j)}(x)$, $j=1,2$. Thus, we are looking for complex constants $a_k^{(j)}$, $b_k^{(j)}$ such that 
\begin{align}\label{Z1}
&\rexp^{\Re\mu x}\big\|
\big(a_k^{(1)}\begin{bmatrix}\bfq\\\bfq'\end{bmatrix}+b_k^{(1)}\begin{bmatrix}\bfp\\\bfp'\end{bmatrix}\big)-\bfw_k^{(1)}\big\|=o(1) \text{ as $x\to+\infty$},\\
&\rexp^{\Re\nu x}\big\|
\big(a_k^{(2)}\begin{bmatrix}\bfq\\\bfq'\end{bmatrix}+b_k^{(2)}\begin{bmatrix}\bfp\\\bfp'\end{bmatrix}\big)-\bfw_k^{(2)}\big\|=o(1) \text{ as $x\to+\infty$},
\end{align}
where $\bfp$ and $\bfq$ are the solutions from \eqref{bfp} and \eqref{bfq} and $\bfw_k^{(j)}$ are the columns of the matrix $\Phi(x)\bfQ_j$ from \eqref{q1phi} and \eqref{q2phi}.
As soon as the constants are found we will then set, cf.\ \eqref{defEF}
\begin{equation}
\bfz_k^{(j)}=a_k^{(j)}\begin{bmatrix}\bfq\\\bfq'\end{bmatrix}+b_k^{(j)}\begin{bmatrix}\bfp\\\bfp'\end{bmatrix} \text{ and } \bfY_+(x)=\begin{bmatrix} \bfz_1^{(1)}+\bfz_1^{(2)}&\bfz_2^{(1)}+\bfz_2^{(2)}&\bfz_3^{(1)}+\bfz_3^{(2)}&\bfz_4^{(1)}+\bfz_4^{(2)}\end{bmatrix}.
\end{equation}
The result of these calculations are given in the following lemma.
\begin{lemma}\label{lem:Y+} Assume \eqref{remunuin} and let
\begin{equation}\label{ab}
a_1:=\frac1{2(-\lambda-2-2\nu)},\quad
b_2:=\frac1{2(\lambda-2-2\mu)},\quad
a_3:=-\frac{a_1}{\nu},\quad
b_4:=-\frac{b_2}{\mu}.
\end{equation}
Then
\begin{gather}\label{Y+}
\bfY_+^{(1)}(x)=\begin{bmatrix} 0&b_2\begin{bmatrix}\bfp\\\bfp'\end{bmatrix}&0&b_4\begin{bmatrix}\bfp\\\bfp'\end{bmatrix}\end{bmatrix},\quad
\bfY_+^{(2)}(x)=\begin{bmatrix} a_1\begin{bmatrix}\bfq\\\bfq'\end{bmatrix}&0&a_3\begin{bmatrix}\bfq\\\bfq'\end{bmatrix}&0\end{bmatrix},\\
\bfY_+(x)=\begin{bmatrix} a_1\begin{bmatrix}\bfq\\\bfq'\end{bmatrix}&b_2\begin{bmatrix}\bfp\\\bfp'\end{bmatrix}&a_3\begin{bmatrix}\bfq\\\bfq'\end{bmatrix}&b_4\begin{bmatrix}\bfp\\\bfp'\end{bmatrix}\end{bmatrix}.\end{gather}
\end{lemma}
\begin{proof}
Formulas \eqref{bfp}, \eqref{bfq}, \eqref{bfpp}, \eqref{bfqp} yield
\begin{align}
\bfp(x)&=\rexp^{-\mu x}\big( f\cdot v+h), \quad \bfp'(x)=\rexp^{-\mu x}\big( -\mu f\cdot v+h),\\
\bfq(x)&=\rexp^{-\nu x}\big( g\cdot u+h), \quad \bfq'(x)=\rexp^{-\nu x}\big(- \nu g\cdot u+h),
\end{align}
where we define, cf.\ \eqref{defmunu},
\begin{equation}\label{fgh}
\begin{split}
f(x)&:=\lambda-2-2\mu\tanh x \to \lambda-2-2\mu =-(1+\mu)^2 \text{ as $x\to+\infty$},\\
g(x)&:=-\lambda-2-2\nu\tanh x\to-\lambda-2-2\nu=-(1+\nu)^2 \text{ as $x\to+\infty$},\end{split}\end{equation}
and denote by $h$ a generic function such that $h=h'=o(1)$ as $x\to+\infty$.

We begin with $j=1$, that is, with the solution $\bfY_+^{(1)}$. We denote by $\bfw^{(1)}_k$, $k=1,2,3,4$, the columns of the matrix $\Phi(x)\bfQ_1$ from \eqref{q1phi}. Consider the case $k=1$, that is, the first column of $\bfY_+^{(1)}$.
Recall that $w_1^{(1)}=0$ by \eqref{q1phi} and $\rexp^{(\Re\mu-\Re\nu)x}\to\infty$ as $x\to+\infty$ by \eqref{remunuin}. Thus \eqref{Z1} with $k=1$ and \eqref{fgh} imply that $a_1^{(1)}$ and $b_1^{(1)}$ must be chosen such that both $a_1^{(1)}$ and $b_1^{(1)}(1+\mu)^2$ must be equal to zero. Since $(1+\mu)^2\neq0$ by \eqref{remunuin}, we conclude that 
$a_1^{(1)}=b_1^{(1)}=0$ and just the first column of $\bfY_+^{(1)}$ is zero as required in \eqref{Y+}.

Next, we consider the case $k=2$. Now \eqref{Z1} yields $a_2^{(1)}=0$ as before. By \eqref{q1phi} the second column of $\Phi(x)\bfQ_1$ is $\rexp^{-\mu x}\widehat{\bfw}_2$; this must be compensated by a choice of $b_2^{(1)}$, that is,  one must have $b_2^{(1)}=b_2$ where $b_2$ is defined in \eqref{ab}. 

An analogous argument for the cases $k=3$ and $k=4$ shows that $a_3^{(1)}=a_4^{(1)}=b_3^{(1)}=0$ and $b_4^{(1)}=b_4$ where $b_4$ is defined in \eqref{ab}. This gives the desired result for $\bfY_+^{(1)}$. We notice that the Jost solution $\bfY_+^{(1)}$ is uniquely determined.

The argument for $a_k^{(2)}$ and $b_k^{(2)}$, $k=1,2,3,4$, and thus for $\bfY_+^{(2)}$ is analogous. Notice that for $j=2$, however, one uses \eqref{Y2}. In particular, for the second column of $\bfY_+^{(2)}$ on can choose $b_2^{(2)}$ arbitrary because 
$\rexp^{(\Re\nu-\Re\mu)x}\to0$ as $x\to\infty$ by \eqref{remunuin} (and we have chosen $b_2^{(2)}=0$). Thus, the Jost solution $Y_+^{(2)}$ is not uniquely determined, cf.\ \cite[Remark 6.3]{EJF}. Nevertheless, this does not affect the final calculation of the Evans determinant, cf.\ \cite[Lemma 7.6]{EJF}.
\end{proof}

We will now deal with $\bfY_-$, that is, with $j=3,4$. Using \eqref{Y+},
we define
\begin{equation}\label{defY-}
\bfY_-^{(3)}(x):=\widehat{\boldsymbol\sigma}_3\bfY_+^{(2)}(-x)\widehat{\boldsymbol\sigma}_3, \,\,
\bfY_-^{(4)}(x):=\widehat{\boldsymbol\sigma}_3\bfY_+^{(1)}(-x)\widehat{\boldsymbol\sigma}_3,
\text{ where $\widehat{\boldsymbol\sigma}_3:=\begin{bmatrix}\bfI_{2\times2}&0\\0&-\bfI_{2\times2}\end{bmatrix}$}.
\end{equation}
We remark that for any $(4\times 4)$ block matrix with $(2\times2)$ blocks $A,B,C,D$ one has
\begin{equation}\label{ABCD}
\widehat{\boldsymbol\sigma}_3\begin{bmatrix}A&B\\C&D\end{bmatrix}\widehat{\boldsymbol\sigma}_3
=\begin{bmatrix}A&-B\\-C&D\end{bmatrix}.\end{equation}
Noticing that $\bfR$ in \eqref{ODE} is even, and since $\bfY_+^{j-2}$ for $j=3,4$ is a solution of \eqref{ODE}, a direct computation shows that $\bfY_-^{(j)}$ just defined solves \eqref{ODE} for $j=3,4$. To deal with
\eqref{Jsol-} for $j=3$ we notice that $\widehat{\boldsymbol\sigma}_3$ is unitary, and so, replacing $x$ by $x'=-x$ and using \eqref{bohls} and $\widehat{\boldsymbol\sigma}_3\bfQ_3\widehat{\boldsymbol\sigma}_3=\bfQ_2$,
we calculate,
\begin{align}
&\liminf_{x\to-\infty}\frac1x\log\|\bfY_-^{(3)}(x)-\Phi(x)\bfQ_3\|=
\liminf_{x'\to+\infty}\frac1{-x'}\log\|\widehat{\boldsymbol\sigma}_3\bfY_-^{(2)}(x')\widehat{\boldsymbol\sigma}_3-\Phi(-x')\bfQ_3\|\\&=
-\limsup_{x\to+\infty}\frac1x\log\|\widehat{\boldsymbol\sigma}_3\big(\bfY_+^{(2)}-\Phi(x)\bfQ_2\big)\widehat{\boldsymbol\sigma}_3\|=-\limsup_{x\to+\infty}\frac1x\log\|\bfY_+^{(2)}-\Phi(x)\bfQ_2\|\\&>-\varkappa(\bfQ_2)=\varkappa(\bfQ_3),
\end{align}
as required. The argument for $j=4$ is similar. 

Finally, we compute the Evans function $E$, cf.\ \eqref{defEF}.
It follows that $\bfY(x)=\bfY_+(x)+\widehat{\boldsymbol\sigma}_3\bfY_-(-x)\widehat{\boldsymbol\sigma}_3$ and thus by
\eqref{ABCD} the matrix $\bfY(x)$ is  the block-matrix with $(2\times2)$-blocks
given by
\begin{equation}\label{YREP}
\bfY(x)=\begin{bmatrix}a_1(\bfq(x)+\bfq(-x))& b_2(\bfp(x)+\bfp(-x))&
a_3(\bfq(x)-\bfq(-x))& b_4(\bfp(x)-\bfp(-x))\\
a_1(\bfq'(x)-\bfq'(-x))& b_2(\bfp'(x)-\bfp'(-x))&
a_3(\bfq'(x)+\bfq'(-x))& b_4(\bfp'(x)+\bfp'(-x))\end{bmatrix}.
\end{equation}
In particular, $\bfY(0)$ is block diagonal and one can use \eqref{detpzero}, \eqref{detqzero} to obtain the following final result required in \eqref{eff},
\begin{align}
E&=\det \bfY(0)=\det\begin{bmatrix}2a_1\bfq(0)&2b_2\bfp(0)\end{bmatrix}\times
\det\begin{bmatrix}2a_3\bfq'(0)&2b_4\bfp'(0)\end{bmatrix}\\
&=16a_1b_2a_3b_4\det\begin{bmatrix}\bfq(0)&\bfp(0)\end{bmatrix}\times 
\det\begin{bmatrix}\bfq'(0)&\bfp'(0)\end{bmatrix}\\
&=16a_1b_2a_3b_4\mu\nu\det\begin{bmatrix}-\lambda-1&1\\1&\lambda-1\end{bmatrix}\times\det\begin{bmatrix}-\lambda+1&1\\1&\lambda+1\end{bmatrix}\\
&=16\lambda^4a_1b_2a_3b_4\mu\nu=\frac{\lambda^4
}{(1+\mu)^4(1+\nu)^4}.
\end{align}

\end{proof}

\section{General Formula for the trace of $\mathcal K(\lambda)$}\label{App:TraceK}

In  this appendix we provide a formula for the trace of the
Birman-Schwinger operator, $\mathcal K(\lambda)$, as a function of the spectral parameter, $\lambda$. This result shows that even though $\mathcal K(\lambda)$ is trace class for every $\lambda \in \mathbb C \setminus \sigma_{\text{ess}}(\mathcal L)$, the trace blows up as 
 as $\lambda$ converges to the edge of the essential spectrum.

\begin{theorem}\label{AppB:TraceKThm}
Suppose that the assumptions of Theorem~\ref{TCthm} hold.
Then 
\begin{equation}\label{AppB:TraceKGeneral}
\Tr (\mathcal K(\lambda)) \,\,=\,\,
\frac{a_+}{\sqrt{\lambda - \lambda_{e,+}}} \,\,+\,\,
\frac{a_-}{\sqrt{\lambda - \lambda_{e,-}}},
\end{equation}
where $\lambda_{e,\pm} = \delta \pm i \alpha$
are the edges of the two branches of the essential spectrum
and $a_{\pm }$ depend on the parameters in the CGLE~\eqref{cqcgle} and on $\int_\mathbb R |\boldsymbol\psi|^2$ and $\int_\mathbb R |\boldsymbol\psi|^4$,
but not on the spectral parameter, $\lambda$. 
In particular, there is a path $\lambda=\lambda(t)\to \lambda_{e,\pm}$ as $t\to\infty$ so that 
$\Re(\Tr \,\mathcal K(\lambda(t))) \to -\infty$ and hence that
$|e^{-\Tr \,\mathcal K(\lambda(t))}| \to \infty$.
In the special case of the hyperbolic secant solution of the NLSE~\eqref{yuripde}, 
\begin{equation}\label{AppB:TraceKNLSE}
    \Tr (\mathcal K(\lambda)) \,\,=\,\, -4\left[ 
\frac{1}{\sqrt{1-2i\lambda}} \,\,+\,\,
\frac{1}{\sqrt{1+2i\lambda}}\right].
\end{equation}
\end{theorem}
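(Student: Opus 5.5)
The plan is to start from the identity already established in the proof of Theorem~\ref{TCthm}:
\[
\Tr(\mathcal K(\lambda)) = -\int_{\mathbb R}\Tr(\mathbf Q(\lambda)\mathbf R(x))\,dx .
\]
This reduces the problem to computing the $4\times4$ trace $\Tr(\mathbf Q\mathbf R(x))$ explicitly. Since $\mathbf R$ has only a lower-left block $-\mathbf B^{-1}\mathbf M(x)$, we get
\[
\Tr(\mathbf Q\mathbf R)=-\Tr(\mathbf Q_{12}\mathbf B^{-1}\mathbf M),
\]
where $\mathbf Q_{12}$ is the upper-right $2\times2$ block of $\mathbf Q=\mathbf P\widehat{\mathbf Q}\mathbf P^{-1}$. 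I would compute this block from the explicit formulas in Proposition~\ref{prop:diagzbl}(i). Only the first two columns of $\mathbf P$ and the first two rows of $\mathbf P^{-1}$ enter. The result should be
\[
\mathbf Q_{12}=-\frac{1}{4}\left(\frac{1}{\sigma_-}\mathbf E_- + \frac{1}{\sigma_+}\mathbf E_+\right),
\]
where $\mathbf E_\pm$ are fixed rank-one matrices, namely $\begin{bmatrix}1 & \mp i\\ \pm i & 1\end{bmatrix}$ up to sign conventions. These are twice the projections onto the eigenvectors $[1,\mp i]^T$ of every matrix of the form $\begin{bmatrix}p&-q\\q&p\end{bmatrix}$. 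The case of repeated eigenvalues in Proposition~\ref{prop:diagzbl}(ii) is a limit and is handled by continuity/analyticity.

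Next I use that $\mathbf B$, $\mathbf N_1$, $\mathbf N_2$ all lie in the commutative algebra spanned by $\mathbf I$ and $\mathbf J$. Decomposing
\[
\mathbf M = \mathbf N_1|\boldsymbol\psi|^2+\mathbf N_2|\boldsymbol\psi|^4+(2\mathbf N_1+4\mathbf N_2|\boldsymbol\psi|^2)\boldsymbol\psi\boldsymbol\psi^T,
\]
the terms in the algebra act on $\mathbf E_\pm$ by the scalar eigenvalues $\epsilon\pm i\gamma$, $\mu\pm i\nu$, and $(\beta\pm iD/2)^{-1}$. For the rank-one term, $\Tr(\mathbf E_\pm\mathbf C\boldsymbol\psi\boldsymbol\psi^T)=c_\pm\,\boldsymbol\psi^T\mathbf E_\pm\boldsymbol\psi=c_\pm|\boldsymbol\psi|^2$, because $\boldsymbol\psi$ is real and the off-diagonal parts of $\mathbf E_\pm$ cancel in the quadratic form. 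Hence
\[
\Tr(\mathbf Q\mathbf R(x))=\frac{1}{\sigma_+}f_+(x)+\frac{1}{\sigma_-}f_-(x),
\]
with $f_\pm$ linear combinations of $|\boldsymbol\psi|^2$ and $|\boldsymbol\psi|^4$ whose coefficients depend only on the CGLE parameters. Integrating over $x$ gives constants built from $\int|\boldsymbol\psi|^2$ and $\int|\boldsymbol\psi|^4$.

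It remains to express $\sigma_\pm$ in terms of $\lambda$. From \eqref{eq:sigmapmdef2}, $\sigma_\pm^2(\beta\pm iD/2)=\lambda-\delta\mp i\alpha=\lambda-\lambda_{e,\pm}$. Therefore $\sigma_\pm=\sqrt{\lambda-\lambda_{e,\pm}}/\sqrt{\beta\pm iD/2}$, with branches chosen so that $\Re\sigma_\pm>0$, which matches the principal branch in \eqref{eq:sigmapmdef}. Absorbing $\sqrt{\beta\pm iD/2}$ into $a_\pm$ yields \eqref{AppB:TraceKGeneral}. \textbf{The main obstacle is this branch bookkeeping}: one must check that a single constant $a_\pm$ works on all of $\mathbb C\setminus\sigma_{\rm ess}$, and I expect at most a sign issue that analytic continuation resolves.

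For the blow-up claim, I would choose a path $\lambda(t)=\lambda_{e,\pm}+s(t)e^{i\theta}$ with $s\to0^+$ and a fixed angle $\theta$ off the essential spectrum. Along it, $a_\pm/\sqrt{\lambda-\lambda_{e,\pm}}=s^{-1/2}a_\pm e^{-i\theta/2}$ dominates the other term, which stays bounded if $\lambda_{e,+}\ne\lambda_{e,-}$. The angle $\theta$ is chosen so that $\Re(a_\pm e^{-i\theta/2})<0$; this is possible whenever $a_\pm\neq0$, since $\theta/2$ ranges over an interval of length nearly $\pi$. I would verify $a_\pm\ne0$ in the intended setting, for example from the NLSE case.

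For the secant case, I would substitute $\gamma=1$, $D=1$, $\alpha=-1/2$, all other parameters zero, and use $\int\sech^2=2$. Then $\lambda_{e,\pm}=\mp i/2$ and $\sqrt{\lambda\pm i/2}\propto\sqrt{1\mp2i\lambda}$, which should reproduce \eqref{AppB:TraceKNLSE} with coefficient $-4$. This also serves as a check on the constants.
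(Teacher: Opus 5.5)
Your proposal is correct and is essentially the paper's argument. The paper likewise reduces $\Tr\mathbf K(x,x;\lambda)$ to the off-diagonal block $\tfrac14\mathbf P_2\mathbf S_4=-\mathbf Q_{12}$ of the stable projection, computed from the explicit diagonalization, and uses that $\mathbf B^{-1}$, $\mathbf N_1$, $\mathbf N_2$ lie in the commutative algebra spanned by $\mathbf I$ and $\mathbf J$; it expands in the basis $\{\mathbf I,\mathbf J\}$ (giving $a_1,a_2$), whereas your eigen-idempotents $\mathbf E_\pm$ produce $a_\pm$ directly.

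Your branch caution is well placed, but you should fix the branch explicitly rather than appeal to analytic continuation. The formula holds with $\lambda$-independent $a_\pm$ only if $\sqrt{\lambda-\lambda_{e,\pm}}$ means $\sqrt{\beta\pm iD/2}\,\sigma_\pm(\lambda)$, i.e.\ is cut along the corresponding branch of $\sigma_{\rm ess}$. With the principal branch the formula is off by a sign on a wedge when $D\neq0$.

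For the blow-up path, the exact condition is $a_\pm/\sqrt{\beta\pm iD/2}\notin[0,\infty)$, not merely $a_\pm\neq0$. Your two open arcs of length $\pi$ can be complementary, in which case no direction works. The paper's path $\lambda_{e,\pm}+(a_\pm/t)^2$ also passes over this degenerate case.
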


\begin{proof}
By \eqref{kktildej2},
  $  \mathbf{K}(x,x;\lambda) = 
   \mathbf{R}_r(x)[\mathbf{I}-\mathbf P(\lambda) \widehat{\mathbf Q}\mathbf P^{-1}(\lambda)]\mathbf{R}_{\ell}(x)$,
where $\widehat{\mathbf Q}$ is given by \eqref{qhatpform} and 
$\mathbf P(\lambda)$ and $\mathbf P^{-1}(\lambda)$
are given by \eqref{P}. Employing the blockings
\begin{equation}
    \widehat{\mathbf{Q}} = \begin{bmatrix}
    \mathbf{I} & \mathbf{0}\\ \mathbf{0} & \mathbf{0}
\end{bmatrix},\qquad
 \mathbf{P} =  \begin{bmatrix}
    \mathbf{P}_1 & \mathbf{P}_2\\ \mathbf{P}_3 & \mathbf{P}_4
\end{bmatrix},\qquad
\mathbf{P}^{-1} = \frac 14
\begin{bmatrix}
    \mathbf{S}_1 & \mathbf{S}_2\\ \mathbf{S}_3 & \mathbf{S}_4
\end{bmatrix},
\end{equation}
and using the fact that the perturbation matrices are of the form~\eqref{eq:redefRrl}, 
\begin{equation}
   \mathbf{R}_r(x) = \begin{bmatrix}
        \mathbf{T}_r(x) & \mathbf{0}\\ \mathbf{0} & \mathbf{0}
    \end{bmatrix}, \qquad\text{and}\qquad
\mathbf{R}_{\ell}(x) = \begin{bmatrix}
    \mathbf{0} & \mathbf{0}\\
\mathbf{T}_\ell(x) & \mathbf{0}
\end{bmatrix},
\end{equation}
we find that $\Tr(\mathbf{K}(x,x;\lambda))  = \frac 14 \Tr[ \mathbf P_2 \mathbf S_4 \mathbf T_\ell(x) \mathbf T_r(x)]$. 
A calculation shows that 
\begin{equation}
\mathbf P_2 \mathbf S_4 = 
\left( \frac 1{\sigma_+} + \frac 1{\sigma_-}\right)\mathbf I
-i \left( \frac 1{\sigma_+} - \frac 1{\sigma_-}\right)\mathbf J,\qquad \text{where } \mathbf J = \begin{bmatrix}0&-1\\1&0\end{bmatrix},
\end{equation}
where, by \eqref{eq:sigmapmdef}, 
 \begin{equation}
 \sigma_{\pm} \,\,=\,\, \sigma_{\pm}(\lambda)
 \,\,=\,\,
 \sqrt{\frac{1}{\det\mathbf B}  \left[\beta \mp iD/2\right] 
 \left[\lambda - (\delta \pm i \alpha)\right]}.
\end{equation}

Next, by \eqref{eq:redefRrl}, 
$\mathbf T_\ell(x) \mathbf T_r(x) = - \mathbf B^{-1}\mathbf M(x)$, where $\mathbf B^{-1} = \frac{1}{\det\mathbf B}
(\beta \mathbf I - \frac D2 \mathbf J)$ and 
$\mathbf M = \mathbf N_1 |\boldsymbol\psi|^2 + \mathbf N_2 |\boldsymbol\psi|^4 
+ (2\mathbf N_1 + 4\mathbf N_2|\boldsymbol\psi|^2)\boldsymbol\psi\boldsymbol\psi^T$,
with $\mathbf N_1 = \epsilon \mathbf I + \gamma \mathbf J$
and $\mathbf N_2 = \mu \mathbf I + \nu \mathbf J$. 
Therefore,  $-\frac 14  \mathbf P_2 \mathbf S_4 \mathbf B^{-1}
= a_1\mathbf I + a_2\mathbf J$, where 
\begin{equation}\label{eq:apm1}
a_1 = \frac {-1}{4\det\mathbf B} \left[ 
\frac{\beta - i D/2}{\sigma_+(\lambda)}  +  \frac{\beta + i D/2}{\sigma_-(\lambda)} \right]
\quad\text{and}\quad
a_2 = \frac i{4\det\mathbf B} \left[ 
\frac{\beta - i D/2}{\sigma_+(\lambda)}  - \frac{\beta + i D/2}{\sigma_-(\lambda)} \right].
\end{equation}
Since $\mathbf J^2=-\mathbf I$,
and $\mathbf J$ and $\mathbf J\boldsymbol\psi\boldsymbol\psi^T$
are traceless, we find that 
\begin{equation}
\Tr (\mathbf K(x,x;\lambda)) \,\,=\,\, 
2|\boldsymbol\psi|^2 \left[ a_1(2\epsilon + 3 \mu |\boldsymbol\psi|^2) - a_2(2\gamma+3\nu|\boldsymbol\psi|^2)\right].
\end{equation}
Finally, by \cite{GZL2025NumericalFredholm,Simon}
\begin{align}
\Tr(\mathcal K(\lambda)) \,\,&=\,\,
\int_{\mathbb R} \Tr( \mathbf K(x,x;\lambda))\, dx 
\nonumber
\\
\,\,&=\,\,
\left[ 4 \epsilon \int |\boldsymbol\psi|^2\
+ 6 \mu \int |\boldsymbol\psi|^4\right]a_1
- 
\left[ 4 \gamma \int |\boldsymbol\psi|^2
+ 6 \nu \int |\boldsymbol\psi|^4
\right]a_2.
\label{eq:apm2}
\end{align}
Equations~\eqref{AppB:TraceKGeneral} and \eqref{AppB:TraceKNLSE} now follow, 
where $a_{\pm}$ are obtained from \eqref{eq:apm1} and \eqref{eq:apm2}. The desired
path along which $\Tr(\mathcal K(\lambda(t)))$ blows up 
is given by $\lambda(t) = \lambda_{e,\pm} + 
(a_{\pm}/-t)^2$.
\end{proof}

\begin{Backmatter}


\paragraph{Funding Statement}
This work was funded by the National Science Foundation under DMS-2106203, DMS-2525548 and DMS-2106157.

\bibliography{FredholmCGLE}

@article{sinkin2003optimization,
  title={Optimization of the split-step {F}ourier method in modeling optical-fiber communications systems},
  author={Sinkin, O.V. and Holzl{\"o}hner, R. and Zweck, J. and Menyuk, C.R.},
  journal={Journal of lightwave technology},
  volume={21},
  number={1},
  pages={61},
  year={2003}
}

@article{delves1967numerical,
  title={A numerical method for locating the zeros of an analytic function},
  author={Delves, L.M. and Lyness, J.N.},
  journal={Mathematics of computation},
  volume={21},
  number={100},
  pages={543--560},
  year={1967}
}

@article{lyness1967numerical,
  title={On numerical contour integration round a closed contour},
  author={Lyness, J.N. and Delves, L.M.},
  journal={Mathematics of computation},
  volume={21},
  number={100},
  pages={561--577},
  year={1967}
}

@article{Kaup,
  title={Perturbation theory for solitons in optical fibers
         },
  author={Kaup, D.},
  journal={Phys.\ Rev.\ A },
  volume={42},
  number={9},
  pages={5689--5694},
  year={1990}
}

@book{golub2013matrix,
  title={Matrix computations},
  author={Golub, G.H. and Van {L}oan, C.F.},
  year={2013},
  publisher={John {H}opkins {U}niversity press}
}

@article{barashenkov2000oscillatory,
  title={Oscillatory instabilities of gap solitons: a numerical study},
  author={Barashenkov, IV and Zemlyanaya, EV},
  journal={Computer physics communications},
  volume={126},
  number={1-2},
  pages={22--27},
  year={2000},
  publisher={Elsevier}
}

@Article{PhysicaD124p58,
  author = "Kapitula, T. and Sandstede, B.",
  title = {Stability of bright solitary-wave solutions to perturbed nonlinear {S}chr\"odinger equations},
 journal = {Physica D}, 
 volume = {124},
 year = {1998}, 
 pages = {58--103}
}

@Article{PhysicaD116p95,
  author = "Kapitula, T.",
  title = {Stability criterion for bright solitary waves of the perturbed cubic-quintic 
           {S}chr\"odinger Equations},
 journal = {Physica D},
 volume = {116},
 year = {1998},
 pages = {95--120}
}

@article{PTRSLA340p47,
author={Pego, R.~L.  and Weinstein, M.~I.},
title={Eigenvalues, and instabilities of solitary waves},
journal={Phil. Trans. R. Soc. Lond. A},
volume={340},
pages={47--94},
year={1992}
}

@Article{SIREV48p629,
author = {Kutz, J.~N.},
title = {Mode-locked soliton lasers},
journal = {SIAM Review},
volume = {48},
number = {4},
pages = {629--678},
year={2006}
}

@Article{PRA86p033616,
author = {Shen, Y. and Kevrekidis, P.~G. and Whitaker, N. and
          Karachalios, N.~I. and Frantzeskakis, D.~J.},
title = {Finite-temperature dynamics of matter-wave dark solitons in linear and periodic potentials:
an example of an anti-damped {J}osephson junction},
journal = {Phys. Rev. A},
volume = {86},
number = {3},
pages = {033616--033628},
year={2012}
}

@Article{PRE55p4783,
  author =      "Soto-Crespo, J.~M. and Akhmediev, N.~N. and Afanasjev, V.~V. and Wabnitz, S.",
  title =       "Pulse solutions of the cubic-quintic complex {G}inzburg-{L}andau equation in the case of    
                 normal dispersion", 
  journal =     "Phys.\ Rev.\ E",
  volume =      "55",
  number =      "4",
  pages =       "4783--4796",
  year  =       "1997"
}

@Article{PLA372p3124,
  author =      "Akhmediev, N. and Soto-Crespo, J.~M. and Grelu, {Ph.}",
  title =       "Roadmap to ultra-short record high-energy pulses out of laser oscillators",
  journal =     "Phys.\ Lett.\ A",
  volume =      "372",
  pages =       "3124--3128",
  year  =       "2008"
}

@Article{JOSAB13p1439,
  author =      "Soto-Crespo, J.~M. and Akhmediev, N.~N. and Afanasjev, V.~V.",
  title =       "Stability of the pulselike solutions of the quintic complex {G}inzburg {L}andau equation",
  journal =     "J.\ Opt.\  Soc.\  Amer.\  B",
  volume =      "13",
  number =      "7",
  pages =       "1439--1449",
  year  =       "1996"
}

@Article{JOSAB31p2914,
  author =      "Wang, S. and Docherty, A. and Marks, B.~S.  and Menyuk, C.~R.",
  title =       "Boundary tracking algorithms for determining the stability of mode-locked pulses",
  journal =     "J.\ Opt.\  Soc.\  Amer.\  B",
  volume =      "31",
  number =      "11",
  pages =       "2914--2930",
  year  =       "2014"
}

@article{humpherys2006efficient,
  title={An efficient shooting algorithm for Evans function calculations in large systems},
  author={Humpherys, Jeffrey and Zumbrun, Kevin},
  journal={Physica D: Nonlinear Phenomena},
  volume={220},
  number={2},
  pages={116--126},
  year={2006},
  publisher={Elsevier}
}

@Article{JOSAB15p2757,
author = {Kapitula, T. and Sandstede, B.},
journal = {J. Opt. Soc. Am. B},
number = {11},
pages = {2757--2762},
title = {Instability mechanism for bright solitary-wave solutions to the cubic--quintic {G}inzburg--{L}andau equation},
volume = {15},
year = {1998}
}

@article{Indiana53p1095,
author={Kapitula, T. and Kutz, N. and Sandstede, B.},
title={The {Evans} function for nonlocal equations},
journal={Indiana Univ. Math. J.},
volume={53},
number={4},
pages={1095--1126},
year={2004}
}

@article{PRSL2001p257,
author={Afendikov, A.~L. and Bridges, T.~J.},
title={Instability of the {H}ocking-{S}tewartson pulse and its implications for three-dimensional
       {P}oiseulle flow},
journal={Proc. R. Soc. Lond. A},
volume={457},
pages={257--272},
year={2001}
}

@article{PhysicaD172p190,
author={Bridges, T.~J. and Derks, G. and Gottwald, G.},
title={Stability and instability of solitary waves of the fifth-order {KdV} equation:
       a numerical framework},
journal={Physica D},
volume={172},
pages={190--216},
year={2002}
}

@article{SINUM53p2329,
author={Humpherys, J. and Lytle, J.},
title={Root following in {Evans} function computation},
journal={SIAM J. Numer. Anal.},
volume={53},
pages={2329--2346},
year={2015}
}

@article{cuevas2017floquet,
  title={Floquet analysis of {K}uznetsov-{M}a breathers: {A} path towards spectral stability of rogue waves},
  author={Cuevas-Maraver, Jes{\'u}s and Kevrekidis, Panayotis G and Frantzeskakis, Dimitri J and Karachalios, Nikolaos I and Haragus, Mariana and James, Guillaume},
  journal={Physical Review E},
  volume={96},
  number={1},
  pages={012202},
  year={2017},
  publisher={APS}
}

@article{zweck2021essential,
title={The essential spectrum of periodically stationary solutions of the complex {G}inzburg-{L}andau equation},
author={Zweck, J. and  Latushkin, Y. and  Marzuola, {J.L.} and Jones, {C.R.K.T.}},
journal={J. Evol. Equ.},
year={2021},
volume={21},
pages={3313-3329}
}

@article{shinglot2023essential,
  title={The essential spectrum of periodically-stationary pulses in lumped models of short-pulse fiber lasers},
  author={Shinglot, V. and Zweck, J.},
  journal={Studies in Applied Mathematics},
  volume={150},
  number={1},
  pages={218--253},
  year={2023}}

@article{shinglot2022continuous,
 title={The Continuous Spectrum of Periodically Stationary
Pulses in a Stretched Pulse Laser},
 author={Shinglot, V. and Zweck, J. and Menyuk, C.},
journal={Opt. Lett.},
volume={47},
number={6},
pages={1490--1493},
year={2022}
}

@article{shinglot2024floquet,
  title={Floquet Stability of Periodically Stationary Pulses in a Short-Pulse Fiber Laser},
  author={Shinglot, Vrushaly and Zweck, John},
  journal={SIAM Journal on Applied Mathematics},
  volume={84},
  number={3},
  pages={961--987},
  year={2024},
  publisher={SIAM}
}

@article{latushkin2015infinite,
  title={The infinite dimensional {E}vans function},
  author={Latushkin, Y. and Pogan, A.},
  journal={Journal of Functional Analysis},
  volume={268},
  number={6},
  pages={1509--1586},
  year={2015},
  publisher={Elsevier}
}

@article{PhysicaD67p45,
author={Pego, R.~L. and Smereka, P. and Weinstein, M.~I.},
title={Oscillatory instability of travelling waves for a {KdV-Burgers} equation},
journal={Physica D},
volume={67},
pages={45--65},
year={1993}
}

@misc{GZL2025NumericalFredholm,
      title={Numerical {F}redholm determinants for matrix-valued kernels on the real line}, 
      author={Gallo, E. and Zweck, J. and Latushkin. L.},
      year={2025},
      eprint={2507.22875},
      archivePrefix={arXiv},
      primaryClass={math.NA},
      howpublished={\href{https://arxiv.org/abs/2507.22875}{https://arxiv.org/abs/2507.22875}}, 
}

@book{atkinson2008introduction,
  title={An introduction to numerical analysis},
  author={Atkinson, K.~E.},
  year={2008},
  publisher={John Wiley \& Sons}
}

@phdthesis{Gallo,
author = {Gallo, E.},
title = {Stability of solutions of the complex Ginzburg-Landau equation},
school={The University of Texas at Dallas},
year = {2024}
}

@article{zweck2024regularity,
Author = {Zweck, J. and Latushkin, L. and Gallo, E.},
Title = {A regularity condition under which integral operators with operator-valued kernels are trace class},
journal = {Bol. Soc. Mat. Mex.},
Year = {2025},
volume={31},
number={38},
pages={1--30},
note={\href{http://arxiv.org/abs//2408.04794}{http://arxiv.org/abs/2408.04794}}
}

@article{bernstein1934convergence,
  title={Sur la convergence absolue des s\'eries trigonom\'etriques},
  author={Bernstein, S.N.},
  journal={C. R. Acad. Sci. Paris},
  volume={199},
  pages={397-400},
  year={1934}
}

@incollection{akhmediev2008dissipative,
  title={Three sources and three component parts of the concept of dissipative solitons},
  booktitle={Dissipative {S}olitons: {F}rom optics to biology and medicine, Lecture Notes in Physics},
  editor={Akhmediev, N and Ankiewicz, A},
  author={Akhmediev, N and Ankiewicz, A},
  publisher={Springer, Berlin},
  volume={751},
  pages={1--28},
  year={2008}
}

@Article{RMP74p99,
author = {Aranson, I.~S. and Kramer, L.},
title = {The world of the complex {G}inzburg-{L}andau equation},
journal = {Rev. Mod. Phys.},
volume = {74},
number = {1},
pages = {99--143}, 
year={2002}
}

@article{shen2016spectra,
  title={Spectra of Short Pulse Solutions of the Cubic-Quintic Complex {G}inzburg-{L}andau
         Equation near Zero Dispersion},
  author={Shen, Y. and Zweck, J. and Wang, S. and Menyuk, C.R.},
  journal={Stud. Appl. Math.}, 
  volume={137},
  number={2},
  pages={238--255},
  year={2016},
  publisher={Wiley Online Library}
}

@book{Simon,
author={B. Simon},
title={Trace Ideals and Their Applications},
edition={2nd},
year={2005},
publisher={American Mathematical Society},
address={Providence, RI}
}

@book{TeschlFA,
author={G. Teschl},
title={Topics in Real and Functional Analysis},
year={2010},
publisher={American Mathematical Society}, address={Providence, RI}
}

@book{Kap,
 author = {Kapitula, T. and Promislow, K.},
 title={Spectral and Dynamical Stability of Nonlinear Waves},
 year={2013},
 publisher={Springer, New York, NY}
}

@article{EssSpec,
title={The essential spectrum of periodically stationary solutions of the complex {G}inzburg-{L}andau equation},
author={Zweck, J. and  Latushkin, Y. and  Marzuola, {J.L.} and Jones, {C.R.K.T.}},
journal={J. Evol. Equ.},
year={2021},
volume={21},
pages={3313-3329}
}

@article{EJF,
  title={Evans functions, {J}ost functions, and {F}redholm determinants},
  author={Gesztesy, F. and Latushkin, Y. and Makarov, K.A.},
  journal={Archive for Rational Mechanics and Analysis},
  volume={186},
  number={3},
  pages={361--421},
  year={2007},
  publisher={Springer}
}

@article{TURITSYN2012,
title = {Dispersion-managed solitons in fibre systems and lasers},
journal = {Physics Reports},
volume = {521},
number = {4},
pages = {135-203},
year = {2012},
note = {Dispersion-Managed Solitons in Fibre Systems and Lasers},
issn = {0370-1573},
doi = {https://doi.org/10.1016/j.physrep.2012.09.004},
url = {https://www.sciencedirect.com/science/article/pii/S0370157312002657},
author = {S. K. Turitsyn and B. G. Bale and M. P. Fedoruk}
}

@article{Palmer,
author={Palmer, K. J.}, 
title={Exponential dichotomies and {F}redholm operators}, 
journal={Proc. Amer. Math. Soc.},
volume={104},
number={1},
pages={149-156},
year={1988}
}

@article{Bornemann,
  	year = 2009,
  	publisher = {American Mathematical Society ({AMS})},
  	volume = {79},
  	number = {270},
  	pages = {871--915},
  	author = {Bornemann, F.},
  	title = {On the numerical evaluation of {F}redholm determinants},
  	journal = {Mathematics of Computation}
}

@book{Meyer,
author={Meyer, C.},
title={Matrix Analysis and Applied Linear Algebra},
publisher={SIAM},
year={2001},
address={Philadelphia, PA}}

@book{Kato,
author={Kato, T.},
title={Perturbation Theory for Linear Operators},
year={1980},
publisher={Springer-Verlag},
address={Berlin}
}

@book{GGK,
author={Gohberg, I. and Goldberg, S. and Krupnik, N.},
title={Traces and Determinants of Linear Operators},
year={2000},
edition={1st},
publisher={Birkh\"auser Basel},
address={Basel, Switzerland}
}

@book{EdmundsEvans,
    author = {Edmunds, D.~E. and Evans, W.~D.},
    title = "{Spectral Theory and Differential Operators}",
    publisher = {Oxford University Press},
    year = {2018},
    month = {05},
    isbn = {9780198812050},
    doi = {10.1093/oso/9780198812050.001.0001},
    url = {https://doi.org/10.1093/oso/9780198812050.001.0001},
}

@article{2,
  title={Ultrafast fiber lasers based on self-similar pulse evolution: a review of current progress},
  author={Chong, Andy and Wright, Logan G and Wise, Frank W},
  journal={\rpp},
  volume={78},
  number={11},
  pages={113901},
  year={2015},
  publisher={IOP Publishing}
}

\end{Backmatter}

\end{document}